\definecolor{maroon}{rgb}{0.5, 0.0, 0.0}
\definecolor{darkblue}{rgb}{0.0, 0.0, 0.55}
\newcommand*\patchAmsMathEnvironmentForLineno[1]{%
 \expandafter\let\csname old#1\expandafter\endcsname\csname #1\endcsname
 \expandafter\let\csname oldend#1\expandafter\endcsname\csname end#1\endcsname
 \renewenvironment{#1}%
    {\linenomath\csname old#1\endcsname}%
    {\csname oldend#1\endcsname\endlinenomath}}%
\newcommand*\patchBothAmsMathEnvironmentsForLineno[1]{%
 \patchAmsMathEnvironmentForLineno{#1}%
 \patchAmsMathEnvironmentForLineno{#1*}}%
\definecolor{brightmaroon}{rgb}{0.76, 0.13, 0.28}
\definecolor{linkblue}{rgb}{0, 0.337, 0.227}
\newcommand{\cay}{\mathop{\mathsf{Cay}}}
\newcommand{\Aut}{\mathsf{Aut}}
\newcommand{\edge}{\mathord{\,\vrule width 12pt height 3pt depth -1.5pt}\,}
\newrobustcmd{\onesub}{\mathord{\includegraphics{one-sub}}}
\newrobustcmd{\leftup}{\mathord{\includegraphics{left-up}}}
\newcommand{\xMapsto}[2][]{\ext@arrow 0599{\Mapstofill@}{#1}{#2}}
\def\Mapstofill@{\arrowfill@{\Mapstochar\Relbar}\Relbar\Rightarrow}
\title{\centering\MakeUppercase{On Matrix Product Factorization of Cayley graphs}}
\author{
\begin{tabular}{c}
\textbf{Allen W. Herman} \\[3pt]
Department of Mathematics and Statistics \\ 
University of Regina
\end{tabular}
\quad\quad
\begin{tabular}{c}
\textbf{Bobby Miraftab} \\[3pt]
School of Computer Science \\ 
Carleton University
\end{tabular}
}
\date{}
\begin{document}

\maketitle

\begin{abstract}
We study when the adjacency matrix of a Cayley graph factors as the product of two adjacency matrices of Cayley graphs. Let $G$ be a finite group and let $U\subseteq G\sm \{e\}$ be symmetric. 
Writing $A(G;U)$ for the adjacency matrix of the Cayley graph of $G$ with respect to $U$, we prove that for symmetric subsets $S,T,U$ of  $G\sm \{e\}$, $A(G;U)=A(G;S)\,A(G;T)$
if and only if $U=ST$ and each $u\in U$ has a unique representation $u=st$,
equivalently $\bigl(\sum_{s\in S}s\bigr)\bigl(\sum_{t\in T}t\bigr)=\sum_{u\in U}u$ in the group algebra. When $S,T,U$ are unions of conjugacy classes, this is characterized character-theoretically by $\chi(U)=\chi(S)\chi(T)/\chi(1)$ for all $\chi\in\mathrm{Irr}(G)$. 
In addition, for abelian groups, we identify $A(G;S)A(G;T)$ with the $0\!-\!1$ convolution $\mathbf{1}_S*\mathbf{1}_T$, so factorability is equivalent to $(S,T)$ being a Sidon pair, i.e., $(S-S)\cap(T-T)=\{0\}$. 
For cyclic groups, we reformulate factorability via mask polynomials and reduce to prime-power components using the Chinese Remainder Theorem. 
We also analyze dihedral groups $D_{2n}$, presenting infinite families of factorable generating sets, and give explicit constructions of subsets whose Cayley graphs do and do not admit such factorizations.
\end{abstract}

\section{Introduction}

A classical topic in graph theory is to ask when a graph can be written as a product of smaller graphs with respect to a fixed graph product (Cartesian, tensor, strong, etc.).  
For instance, a graph $X$ has a \emph{Cartesian factorization} if $X \cong H \,\Box\, K$ for some graphs $H,K$, see \citet{MR2817074} for background and many results across product types.
In a different direction, \citet{Manjunatha} introduced the \defin{matrix product} of graphs: if $H$ and $K$ are graphs on the common vertex set $[n]=\{1,\dots,n\}$ with adjacency matrices $B$ and $C$, their matrix product $HK$ is the (generally weighted, directed) graph whose adjacency matrix is the usual matrix product $BC$.  
We refer readers to the subsequent works for variants, constraints, and applications \cite{maghsoudi2023matrix,akbari2025spectral,miraftab2025factorability}.
Throughout this paper we are interested in simple, undirected graphs, so we focus on situations where the matrix product again yields a simple $0\!-\!1$ adjacency matrix.

Let $G$ be a finite group with identity $e$. For a subset $U\subseteq G$ with $e\notin U$ and $U=U^{-1}$, the \defin{Cayley graph} $\cay(G;U)$ has vertex set $G$ and edges $g\edge h$ if and only if $g^{-1}h\in U$. 
A subset $ U $ of $ G $ is called \defin{symmetric} if $ U^{-1} = U $.
Fix once and for all an enumeration $G=\{g_1,\dots,g_n\}$ and indexed matrices by this order; the adjacency matrix is
\[
A(G;U)=(a_{ij})_{1\le i,j\le n},\qquad a_{ij}=\mathbf 1_{\,g_i^{-1}g_j\in U}.
\]
A central question of this paper is:

\begin{quote}
\emph{When does a Cayley adjacency $A(G;U)$ factor as a matrix product of Cayley adjacencies,}
\[
A(G;U)=A(G;S)\,A(G;T)\qquad\text{with }S,T\subseteq G\sm \{e\}\text{ symmetric?}
\]
\end{quote}

We emphasize that we are insisting that our factors and their product are Cayley graphs on the same group $G$ under a fixed Cayley representation.  Indeed, a product of two arbitrary simple graphs on $G$ may well be a Cayley graph on $G$ even when neither factor is.  The following example over $G=\Z_4=\{0,1,2,3\}$ illustrates this.
Let $H$ have edges $\{0\!\edge\!1,\ 2\!\edge\!3\}$ and let $K$ have edges $\{1\!\edge\!2,\ 3\!\edge\!0\}$.  With respect to the order $0,1,2,3$,
\[
A(H)=\begin{pmatrix}
0&1&0&0\\
1&0&0&0\\
0&0&0&1\\
0&0&1&0
\end{pmatrix},\qquad
A(K)=\begin{pmatrix}
0&0&0&1\\
0&0&1&0\\
0&1&0&0\\
1&0&0&0
\end{pmatrix}.
\]
A direct multiplication gives
\[
A(H)A(K)=\begin{pmatrix}
0&0&1&0\\
0&0&0&1\\
1&0&0&0\\
0&1&0&0
\end{pmatrix},
\]
which is the adjacency of the matching $\{0\!\edge\!2,\ 1\!\edge\!3\}=\cay(\Z_4;\{2\})$.  However, neither $H$ nor $K$ is a Cayley graph on $\Z_4$ (and no relabeling by automorphisms/right translations of $\Z_4$ turns them into Cayley graphs on $\Z_4$).  On the other hand, all three of $A(H)$, $A(K)$, and $A(H)A(K)$ above can be regarded as Cayley graphs on $\Z_2 \times \Z_2$, and relative to this group they do give an example of a Cayley-by-Cayley factorization. 
Here we introduce the following:
\begin{defn}
Let $G $ be a finite group and let $S,T,U\subseteq G\sm \{e\}$ be symmetric subsets such that $A(G;U)=A(G;S)A(G;T)$. 
Then we say that $\cay(G;U)$ has a \defin{factorization}.
In this case, we say that $(S,T,U)$ is \defin{factorable} in $G$.
\end{defn}

Following \citet{decaen}, a \defin{near–factorization} of a finite group $G$
is a pair $(S,T)$ with $e\notin S\cup T$ such that every $g\in G\sm \{e\}$ has a
\emph{unique} representation $g=st$ with $s\in S$, $t\in T$, equivalently
$ST=G\sm \{e\}$ and $S\cap T=\emptyset$.
In our language, this is the special case $U=G\sm \{e\}$, for which
$\bigl(\sum_{s\in S}R(s)\bigr)\bigl(\sum_{t\in T}R(t)\bigr)=\sum_{g\in G}R(g)-I$.  
This provides a useful benchmark
for our sparser Cayley‑by‑Cayley factorizations.

Restricting to Cayley factors leads to a rigid and tractable algebraic formulation.  Writing $R(g)$ for the right‑regular permutation matrix, we have
\[
A(G;X)=\sum_{x\in X} R(x)\qquad(X\subseteq G),
\]
so
\[
A(G;S)A(G;T)=\sum_{(s,t)\in S\times T} R(st).
\]
Since $\{R(g):g\in G\}$ are linearly independent, the identity
$
A(G;U)=A(G;S)A(G;T)
$
holds \emph{iff} every element of $U$ is realized \emph{uniquely} as a product $st$ with $s\in S$, $t\in T$, and no element outside $U$ is so realized.  Equivalently, in the group algebra,
\[
\Big(\sum_{s\in S}s\Big)\Big(\sum_{t\in T}t\Big)=\sum_{u\in U}u.
\]

We note that near–factorizations of groups correspond to factorizations of $J-I$, but the group structure
imposes additional constraints absent from arbitrary $(0,1)$-matrix factorizations, see
\citet{decaen}.


\section{Cayley Factorization of General Finite Groups}

Throughout this paper, $G$ denotes a finite group, and we always assume that the subsets $S$, $T$, and $U$ are symmetric subsets of $G$.

\begin{lem}\label{lem:relabel}
Let $G,G'$ be finite groups and let $\varphi\colon G\to G'$ be an isomorphism.
Let $P_\varphi$ be the $n\times n$ permutation matrix with
$(P_\varphi)_{g,\varphi(g)}=1$ for all $g\in G$.
Then for every subset $X\subseteq G'$,
\[
P_\varphi^{\!\top}\,A(G';X)\,P_\varphi \;=\; A\bigl(G;\varphi^{-1}(X)\bigr).
\]
\end{lem}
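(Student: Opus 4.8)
The statement is essentially bookkeeping: relabeling the vertices of a Cayley graph via a group isomorphism again yields a Cayley graph, with the connection set transported by $\varphi^{-1}$. So my plan is a direct matrix-entry computation, being careful about the indexing conventions. Fix the enumeration $G=\{g_1,\dots,g_n\}$; since $\varphi$ is a bijection, $G'=\{\varphi(g_1),\dots,\varphi(g_n)\}$ inherits an enumeration, and I will index $A(G';X)$ by this order. The matrix $P_\varphi$ sends the basis vector $e_g$ (for $g\in G$) to $e_{\varphi(g)}$, i.e. $P_\varphi e_g = e_{\varphi(g)}$, so $P_\varphi^\top e_{g'} = e_{\varphi^{-1}(g')}$ for $g'\in G'$.

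First I would write out the $(g,h)$-entry of the left-hand side for $g,h\in G$. By the action of permutation matrices on basis vectors,
\[
\bigl(P_\varphi^\top A(G';X)\,P_\varphi\bigr)_{g,h}
= e_g^\top P_\varphi^\top A(G';X)\,P_\varphi e_h
= e_{\varphi(g)}^\top A(G';X)\,e_{\varphi(h)}
= A(G';X)_{\varphi(g),\varphi(h)}.
\]
Second, I would apply the definition of the Cayley adjacency matrix on $G'$: this entry equals $\mathbf 1_{\varphi(g)^{-1}\varphi(h)\in X}$. Third, using that $\varphi$ is a group homomorphism, $\varphi(g)^{-1}\varphi(h)=\varphi(g^{-1}h)$, so the condition $\varphi(g)^{-1}\varphi(h)\in X$ is equivalent to $g^{-1}h\in\varphi^{-1}(X)$. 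Hence the $(g,h)$-entry equals $\mathbf 1_{g^{-1}h\in\varphi^{-1}(X)}=A\bigl(G;\varphi^{-1}(X)\bigr)_{g,h}$, which is exactly the right-hand side.

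There is no real obstacle here; the only thing to watch is consistency of conventions — in particular whether $P_\varphi$ acts on the left or right and which of $P_\varphi,P_\varphi^\top$ implements $g\mapsto\varphi(g)$ versus its inverse. I would state the convention $P_\varphi e_g=e_{\varphi(g)}$ explicitly at the outset so that the three substitutions above compose correctly; with the opposite convention one would get $A(G';\varphi(X))$ on the right instead. It is also worth remarking (though not strictly needed for the proof) that $P_\varphi$ is orthogonal, so $P_\varphi^\top=P_\varphi^{-1}$, which is why this is a genuine similarity/relabeling and explains why the lemma will be used to move factorization questions between isomorphic groups.
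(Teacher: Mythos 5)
Your proposal is correct and follows essentially the same route as the paper: an entrywise computation showing $\bigl(P_\varphi^{\!\top}A(G';X)P_\varphi\bigr)_{g,h}=A(G';X)_{\varphi(g),\varphi(h)}$, followed by the homomorphism identity $\varphi(g)^{-1}\varphi(h)=\varphi(g^{-1}h)$ to translate membership in $X$ into membership in $\varphi^{-1}(X)$. Your explicit fixing of the convention $P_\varphi e_g=e_{\varphi(g)}$ is just a more careful statement of the bookkeeping the paper's proof performs implicitly, so nothing further is needed.
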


\begin{proof}
Write $A'=A(G';X)$ and $A=A\bigl(G;\varphi^{-1}(X)\bigr)$.
For $g,h\in G$ we have
$\bigl(P_\varphi^{\!\top}A'P_\varphi\bigr)_{g,h}=A'_{\varphi(g),\varphi(h)}$.
By the definition of Cayley adjacency, we have the following:
\[
A'_{\varphi(g),\varphi(h)}=1 \iff \varphi(g)^{-1}\varphi(h)\in X
\iff \varphi(g^{-1}h)\in X
\iff g^{-1}h\in \varphi^{-1}(X)
\iff A_{g,h}=1.
\]
Hence $P_\varphi^{\!\top}A(G';X)P_\varphi = A(G;\varphi^{-1}(X))$.
\end{proof}

If $\varphi$ is merely a bijection (not a homomorphism), the identity can fail.
For example, take $G=G'=\Z_3=\{0,1,2\}$ (additive), let $\varphi$ swap $0$ and $1$ and fix $2$,
and let $X=\{2\}$. Then for $g=1,h=0$,
\[
\bigl(P_\varphi^{\!\top}A(G';X)P_\varphi\bigr)_{1,0}
=0,
\]
while
\[
A\bigl(G;\varphi^{-1}(X)\bigr)_{1,0}=1,
\]
so the matrices are different.

The preceding lemma shows that, up to isomorphism, it suffices to study factorization within the class of Cayley graphs.
Throughout this paper, we write $\mathbf 1_X$ for the indicator of a set $X$.

\begin{defn}
Let $G$ be a finite group and $A$ an associative (not necessarily commutative) algebra over a field.
A map $R\colon G\to A$ is called a \defin{(linear) representation} (or \defin{algebra homomorphism}) if
\[
R(xy)=R(x)\,R(y)\quad\text{for all }x,y\in G,\qquad\text{and}\qquad R(e)=\mathbf{1}_A,
\]
where $e$ is the identity of $G$ and $\mathbf{1}_A$ is the multiplicative identity of $A$.
\end{defn}

Let $A$  be the algebra of all matrices index rows/columns by $G$ over $\mathbb C$ i.e.  $M_{|G|}(\mathbb{C})$.
Define $R\colon G\to M_{|G|}(\mathbb{C})$ by
\[
R(x)_{u,v}=
\begin{cases}
1,&\text{if } v=ux,\\
0,&\text{otherwise},
\end{cases}
\qquad (u,v\in G).
\]
Then $R(xy)=R(x)R(y)$ for all $x,y\in G$; hence $R$ is a representation.
We note that 

\begin{lem}\label{lem:product}
Let $G$ be a group, let $S,T\subseteq G\sm \{e\}$ be finite sets, and let $R\colon G\to A$ be a representation into an associative algebra $A$.
Then
\[
\sum_{s\in S}\ \sum_{t\in T} R(s)\,R(t)
\;=\;
\sum_{s\in S}\ \sum_{t\in T} R(st).
\]
\end{lem}

\begin{proof}
Since $R$ is a homomorphism, for each $s\in S$ and $t\in T$ we have $R(s)R(t)=R(st)$.
Therefore the two double sums agree term by term over the common index set $S\times T$:
\[
\sum_{(s,t)\in S\times T} R(s)R(t)=\sum_{(s,t)\in S\times T} R(st),
\]
which proves the claim.
\end{proof}

\begin{lem}\label{lem:product_2}
Let $G$ be a finite group and let $S,T,U \subseteq G\sm \{e\}$. 
If $U=ST$ and moreover for every $u\in U $, there exists a unique pair $(s,t)\in S\times T$ such that $u=st$, then $|U|=|S||T|$.
In addition if $S$ and $T$ are symmetric, then $S\cap T$ is an empty set.
\end{lem}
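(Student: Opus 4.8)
The plan is to prove the two assertions separately, both by elementary counting arguments in the group, without invoking representations.

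First, for the cardinality claim: consider the map $\mu\colon S\times T\to G$ given by $\mu(s,t)=st$. By hypothesis the image of $\mu$ is exactly $U$, and uniqueness of the representation $u=st$ says precisely that $\mu$ is injective. An injective map onto $U$ from a set of size $|S||T|$ forces $|U|=|S\times T|=|S|\,|T|$. This is immediate and there is no obstacle here; the only thing to be careful about is noting that ``$U=ST$'' already encodes surjectivity onto $U$, so the uniqueness hypothesis is exactly injectivity.

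Second, for the disjointness claim when $S$ and $T$ are symmetric: suppose toward a contradiction that some element $g$ lies in $S\cap T$. The idea is to produce two distinct factorizations of a single element of $U$, contradicting uniqueness. Since $g\in T$ and $T$ is symmetric, $g^{-1}\in T$; likewise $g\in S$. Hence $e = g\cdot g^{-1}$ would be a product of an element of $S$ and an element of $T$ --- but $e\notin U$ since $U\subseteq G\sm\{e\}$, and $U=ST$ means $ST\subseteq U$, so $e\notin ST$, already a contradiction. Thus no such $g$ exists and $S\cap T=\emptyset$. I should double-check the logic: $U=ST$ as an equality of sets means in particular $ST\subseteq U\subseteq G\sm\{e\}$, so $e\notin ST$; if $g\in S\cap T$ then, using symmetry of $S$ to get $g^{-1}\in S$ (or symmetry of $T$ to get $g^{-1}\in T$), we obtain $e=g^{-1}g\in ST$, a contradiction. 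So symmetry of just one of $S,T$ actually suffices, though the statement assumes both.

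The main (very mild) subtlety is keeping straight which hypothesis is used where: the cardinality statement uses only $U=ST$ together with uniqueness and needs no symmetry, while the disjointness statement uses $U=ST$ (hence $e\notin ST$) together with the symmetry of $S$ or $T$, and does not even need the uniqueness hypothesis. I would present the proof in two short paragraphs accordingly, so the reader sees that the two conclusions rest on genuinely different parts of the hypothesis.
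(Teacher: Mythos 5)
Your proof is correct and follows essentially the same route as the paper: the bijection $\mu(s,t)=st$ for the cardinality claim, and the observation that $x\in S\cap T$ together with symmetry forces $e\in ST=U$, contradicting $e\notin U$. Your side remarks (that one symmetry hypothesis suffices and that uniqueness is not needed for disjointness) are accurate but do not change the argument.
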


\begin{proof}
Define $\mu:S\times T\to U$ by $\mu(s,t)=st$. Since $U=ST$, $\mu$ is surjective.
The uniqueness hypothesis says each $u\in U$ has exactly one preimage, so $\mu$ is injective.
Thus $\mu$ is a bijection and $|U|=|S||T|$.
Now assume $S$ and $T$ are symmetric. If $x\in S\cap T$, then $x\in S$ and $x^{-1}\in T$,
so $e=xx^{-1}\in ST=U$, contradicting $U\subseteq G\sm\{e\}$. Hence $S\cap T=\varnothing$.
\end{proof}

\begin{thm}\label{thm:main_1}
Let $G$ be a finite group and let $S,T,U \subseteq G\sm \{e\}$ be symmetric sets. 
Then the following statements are equivalent:
\begin{enumerate}
    \item $(S,T,U)$ is factorable in $G$.
    \item $U=ST$ and moreover for every $u\in U $, there exists a unique pair $(s,t)\in S\times T$ such that $u=st$.
    \item $(\sum_{s\in S} s)(\sum_{t\in T}t )=\sum_{u\in U}u$.
\end{enumerate}
\end{thm}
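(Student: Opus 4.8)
The plan is to prove the three statements equivalent by showing $(1)\Rightarrow(3)\Rightarrow(2)\Rightarrow(1)$, exploiting the representation-theoretic bridge already assembled in the excerpt. The key observation is that the right-regular representation $R\colon G\to M_{|G|}(\mathbb{C})$ defined above satisfies $A(G;X)=\sum_{x\in X}R(x)$ for every $X\subseteq G$, and that $\{R(g):g\in G\}$ is a linearly independent set in $M_{|G|}(\mathbb{C})$ (indeed, the matrices $R(g)$ are distinct permutation matrices, so they are even linearly independent over $\mathbb{C}$ since distinct $0\!-\!1$ matrices with a single $1$ in each row supported on disjoint patterns cannot cancel). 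This linear independence is what transfers an equality of matrices into an equality of formal sums, i.e., into the group algebra $\mathbb{C}[G]$.

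\textbf{Step 1: $(1)\Rightarrow(3)$.} Assume $A(G;U)=A(G;S)A(G;T)$. Writing each adjacency matrix as a sum of regular permutation matrices and applying Lemma~\ref{lem:product}, we get
\[
\sum_{u\in U}R(u)=A(G;U)=A(G;S)A(G;T)=\sum_{s\in S}\sum_{t\in T}R(s)R(t)=\sum_{(s,t)\in S\times T}R(st).
\]
Now I would push this equality into $\mathbb{C}[G]$: since the map $\sum_{g}c_g g\mapsto \sum_g c_g R(g)$ is an injective algebra homomorphism $\mathbb{C}[G]\to M_{|G|}(\mathbb{C})$ (injectivity being exactly the linear independence of the $R(g)$), the matrix identity lifts to $\sum_{u\in U}u=\sum_{(s,t)\in S\times T}st=(\sum_{s\in S}s)(\sum_{t\in T}t)$ in $\mathbb{C}[G]$, which is statement $(3)$.

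\textbf{Step 2: $(3)\Rightarrow(2)$.} Comparing coefficients in the group algebra, the coefficient of $g\in G$ on the right-hand side of $(\sum_{s\in S}s)(\sum_{t\in T}t)=\sum_{u\in U}u$ is exactly the number $N(g)=\#\{(s,t)\in S\times T: st=g\}$, while on the left-hand side it is $\mathbf 1_{g\in U}$. Hence $N(g)=1$ for every $g\in U$ and $N(g)=0$ for every $g\notin U$; the first says each $u\in U$ has a unique representation $u=st$, and the second (together with the first) says $ST=U$ exactly. (One should note $e\notin ST$ automatically here since the coefficient of $e$ is $0$, consistent with $U\subseteq G\setminus\{e\}$.) This is precisely statement $(2)$.

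\textbf{Step 3: $(2)\Rightarrow(1)$.} Conversely, assume $U=ST$ with unique representations. Then $N(g)=1$ for $g\in U$ and $N(g)=0$ otherwise, so $\sum_{(s,t)\in S\times T}R(st)=\sum_{u\in U}R(u)$. By Lemma~\ref{lem:product} the left side equals $\sum_{s\in S}R(s)\sum_{t\in T}R(t)=A(G;S)A(G;T)$ and the right side is $A(G;U)$, giving $(1)$. I do not anticipate a genuine obstacle here: the only point requiring care is the precise justification of the linear independence of the regular permutation matrices $\{R(g)\}_{g\in G}$ (equivalently, the faithfulness of the regular representation), since that is the single fact that makes the whole equivalence go through; everything else is bookkeeping with coefficients in $\mathbb{C}[G]$ and the already-proved Lemma~\ref{lem:product}.
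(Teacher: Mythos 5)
Your argument is correct and essentially identical to the paper's: both reduce the matrix identity to the counting function $N(g)=\#\{(s,t)\in S\times T: st=g\}$ via $A(G;X)=\sum_{x\in X}R(x)$ and the linear independence of the regular permutation matrices, the only cosmetic difference being that you run the implications in a cycle $(1)\Rightarrow(3)\Rightarrow(2)\Rightarrow(1)$ while the paper proves $(1)\Leftrightarrow(2)$ and $(2)\Leftrightarrow(3)$ directly. One trivial slip in Step~2: the coefficient $N(g)$ belongs to the left-hand side $\bigl(\sum_{s\in S}s\bigr)\bigl(\sum_{t\in T}t\bigr)$ and $\mathbf 1_{g\in U}$ to the right-hand side, not the other way around.
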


\begin{proof}
Let $R$ be the right-regular representation. For any $X\subseteq G$,
$A(G;X)=\sum_{x\in X}R(x)$. Hence
\[
A(G;S)A(G;T)=\sum_{s\in S}\sum_{t\in T}R(st)=\sum_{g\in G}N(g)\,R(g),
\]
where $N(g)=|\{(s,t)\in S\times T:st=g\}|$.
Since $\{R(g):g\in G\}$ is linearly independent,
\[
A(G;U)=A(G;S)A(G;T)\iff
N(g)=\begin{cases}1,&g\in U,\\0,&g\notin U.\end{cases}
\]
This is exactly the statement that $U=ST$ and each $u\in U$ has a unique representation $u=st$.
This proves (1)$\Leftrightarrow$(2).
Moreover,
\[
\Big(\sum_{s\in S}s\Big)\Big(\sum_{t\in T}t\Big)=\sum_{g\in G}N(g)\,g
\]
in $\Z[G]$, so the same coefficient condition is equivalent to
$\bigl(\sum_{s\in S}s\bigr)\bigl(\sum_{t\in T}t\bigr)=\sum_{u\in U}u$.
Thus (2)$\Leftrightarrow$(3), completing the proof.
\end{proof}

\begin{cor}
For $U=G\sm\{e\}$, the identity $A(G;U)=A(G;S)A(G;T)$ holds if and only if $(S,T)$ is a symmetric near–factorization of $G$ in the sense of \cite{decaen}.
In particular $|S||T|=|G|-1$ and
$S\cap T=\varnothing$.
\end{cor}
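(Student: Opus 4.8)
The plan is to read the corollary straight off Theorem~\ref{thm:main_1} by specializing to $U=G\sm\{e\}$, and then to extract the numerical consequences from Lemma~\ref{lem:product_2}. First I would invoke the equivalence (1)$\Leftrightarrow$(2) of Theorem~\ref{thm:main_1}: the identity $A(G;U)=A(G;S)A(G;T)$ holds precisely when $U=ST$ and every $u\in U$ has a \emph{unique} representation $u=st$ with $s\in S$, $t\in T$. Substituting $U=G\sm\{e\}$, this reads exactly as ``$ST=G\sm\{e\}$ and every non-identity element of $G$ factors uniquely as $st$'', which is verbatim the defining property of a near–factorization of $G$ recalled above from \cite{decaen}. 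Since $S$ and $T$ are symmetric by our standing hypothesis, the near–factorization obtained is a symmetric one, and conversely any symmetric near–factorization satisfies condition (2) of Theorem~\ref{thm:main_1} with $U=G\sm\{e\}$; this gives the stated equivalence in both directions.

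For the ``in particular'' clause, I would apply Lemma~\ref{lem:product_2} with this same $U$: whenever the factorization holds, its hypotheses ($U=ST$ with unique representations, and $S,T$ symmetric) are met, so the lemma yields $|U|=|S||T|$ and $S\cap T=\varnothing$. As $|U|=|G\sm\{e\}|=|G|-1$, we conclude $|S||T|=|G|-1$. One could instead read $S\cap T=\varnothing$ directly off de Caen's equivalent formulation ``$ST=G\sm\{e\}$ and $S\cap T=\varnothing$'', but routing through Lemma~\ref{lem:product_2} keeps the argument self-contained within this section.

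There is essentially no obstacle here, as the corollary is a direct specialization of results already established. The only point requiring a little care is the bookkeeping with definitions: one must check that the ``unique representation'' condition in Theorem~\ref{thm:main_1}(2) coincides with de Caen's defining property of a near–factorization once $U$ is taken to be all of $G\sm\{e\}$, and that the blanket symmetry assumption on $S,T$ is precisely what upgrades ``near–factorization'' to ``symmetric near–factorization''. No new computation is needed beyond what Theorem~\ref{thm:main_1} and Lemma~\ref{lem:product_2} already supply.
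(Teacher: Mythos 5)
Your proposal is correct and matches the paper's intent: the corollary is stated without proof as an immediate specialization of Theorem~\ref{thm:main_1} (take $U=G\sm\{e\}$ and compare condition (2) with de Caen's definition recalled in the introduction), with the cardinality identity $|S||T|=|G|-1$ and the disjointness $S\cap T=\varnothing$ supplied by Lemma~\ref{lem:product_2}, exactly as you argue. No gaps.
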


We emphasize that by factorization in this paper, we mean each factor is a Cayley graph.


\begin{exa}
Let $G=N\rtimes_{\theta}\langle x\rangle$ with $x^2=e$ acting by an involutive automorphism $\theta(n)=xnx^{-1}$ on $N$. Let $U\subseteq N\sm\{e\}$ be symmetric and  $xUx^{-1}=U$. 
We set $S=\{x\}$ and $T=Ux$. 
Then $(S,T,U)$ is factorable in $G$.
Moreover, if $\langle U,x\rangle=G$ then $\cay(G;U)$ is connected and factorable.
\end{exa}

\begin{lem}\label{prop:stability}
Let $G$ be a finite group and let $U,S,T\subseteq G$.
If $(S,T,U)$ is factorable in $G$, then for every $g\in G$ we also have,
$A(G;gUg^{-1}) \;=\; A(G;gSg^{-1})\,A(G;gTg^{-1})$.
Moreover, for every automorphism $\varphi\in \Aut(G)$,
$A(G;\varphi(U)) \;=\; A(G;\varphi(S))\,A(G;\varphi(T))$.
\end{lem}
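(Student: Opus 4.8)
The plan is to reduce everything to the group-algebra criterion of Theorem~\ref{thm:main_1}, exploiting the fact that any automorphism of $G$ — and in particular any inner automorphism $\varphi_g\colon x\mapsto gxg^{-1}$ — extends to a ring automorphism of $\Z[G]$. So the two assertions of the lemma are really one assertion, applied first to $\psi=\varphi_g$ and then to an arbitrary $\psi\in\Aut(G)$.

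First I would record the routine bookkeeping: if $X\subseteq G\sm\{e\}$ is symmetric and $\psi\in\Aut(G)$, then $\psi(X)\subseteq G\sm\{e\}$ is symmetric as well, since $\psi(e)=e$ and $\psi(X^{-1})=\psi(X)^{-1}$; the same holds for $\psi=\varphi_g$, which is an automorphism of $G$. Hence $\psi(S),\psi(T),\psi(U)$ are legitimate inputs for Theorem~\ref{thm:main_1} whenever $S,T,U$ are. Next, by the equivalence $(1)\Leftrightarrow(3)$ of Theorem~\ref{thm:main_1}, factorability of $(S,T,U)$ is equivalent to $\bigl(\sum_{s\in S}s\bigr)\bigl(\sum_{t\in T}t\bigr)=\sum_{u\in U}u$ in $\Z[G]$. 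Any $\psi\in\Aut(G)$ induces a ring automorphism $\widehat\psi$ of $\Z[G]$ by $\widehat\psi\bigl(\sum_g c_g\,g\bigr)=\sum_g c_g\,\psi(g)$, and $\widehat\psi$ is multiplicative with $\widehat\psi\bigl(\sum_{x\in X}x\bigr)=\sum_{x\in\psi(X)}x$ because $\psi$ restricts to a bijection $X\to\psi(X)$. Applying $\widehat\psi$ to both sides of the displayed identity therefore yields $\bigl(\sum_{s\in\psi(S)}s\bigr)\bigl(\sum_{t\in\psi(T)}t\bigr)=\sum_{u\in\psi(U)}u$, and the converse direction $(3)\Rightarrow(1)$ of Theorem~\ref{thm:main_1} turns this back into the matrix identity $A(G;\psi(U))=A(G;\psi(S))\,A(G;\psi(T))$. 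Taking $\psi=\varphi_g$ gives the conjugation statement and taking $\psi$ arbitrary gives the second claim.

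Since each step is an immediate consequence of Theorem~\ref{thm:main_1} together with the elementary functoriality of $X\mapsto\Z[X]$ under group automorphisms, I do not expect any real obstacle; the only point needing (minimal) attention is confirming that the images $\psi(S),\psi(T),\psi(U)$ still lie in $G\sm\{e\}$ and stay symmetric, so that Theorem~\ref{thm:main_1} genuinely applies in both directions. For completeness I would also mention the alternative one-line route that bypasses $\Z[G]$: by Lemma~\ref{lem:relabel} applied with the orthogonal permutation matrix $P_\psi$ one gets $A(G;\psi(X))=P_\psi\,A(G;X)\,P_\psi^{\top}$ for every $X\subseteq G$, whence $A(G;\psi(S))\,A(G;\psi(T))=P_\psi\,A(G;S)\,P_\psi^{\top}P_\psi\,A(G;T)\,P_\psi^{\top}=P_\psi\,A(G;S)A(G;T)\,P_\psi^{\top}=P_\psi\,A(G;U)\,P_\psi^{\top}=A(G;\psi(U))$, and again the inner-automorphism case $\psi=\varphi_g$ gives the first statement.
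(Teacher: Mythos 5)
Your proposal is correct, and your primary route is genuinely different from the paper's. The paper proves the lemma by direct matrix conjugation: for the conjugation statement it uses the permutation matrix $P_g$ of the right translation $v\mapsto vg$ and checks entrywise that $P_g^{-1}A(G;X)P_g=A(G;g^{-1}Xg)$ for all $X\subseteq G$, and for the automorphism statement it uses the relabeling matrix $P_\varphi$ with $P_\varphi^{-1}A(G;X)P_\varphi=A(G;\varphi(X))$; conjugating the assumed identity $A(G;U)=A(G;S)A(G;T)$ then gives both claims. You instead pass through the group-algebra criterion (1)$\Leftrightarrow$(3) of \Cref{thm:main_1} and apply the ring automorphism of $\Z[G]$ induced by $\psi\in\Aut(G)$, treating the conjugation statement as the special case $\psi=\varphi_g$ of inner automorphisms. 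Both arguments are valid. Your route buys uniformity (one argument covers both claims) and you rightly flag the only genuine hypothesis check, namely that $\psi(S),\psi(T),\psi(U)$ remain symmetric subsets of $G\setminus\{e\}$ so that \Cref{thm:main_1} applies in both directions; note that the paper's conjugation argument via right translation does not even need to view $X\mapsto gXg^{-1}$ as coming from an automorphism-induced relabeling, which is a small structural difference from your unified treatment. Your ``alternative one-line route'' at the end, via $A(G;\psi(X))=P_\psi A(G;X)P_\psi^{\top}$ (the specialization of \Cref{lem:relabel} to $G'=G$), is essentially the paper's own proof, so you have in effect given both arguments.
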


\begin{proof}
For $x\in G$, let $P_x$ be the permutation matrix of the bijection
$G\to G$, $v\mapsto vx$. Then one checks directly (by entrywise
evaluation) that $P_g^{-1}\,A(G;X)\,P_g \;=\; A(G;g^{-1}Xg)$, $\forall\,X\subseteq G$.
Conjugating the identity $A(G;U)=A(G;S)A(G;T)$ by $P_g$ gives
$$A(G;gUg^{-1})=A(G;gSg^{-1})A(G;gTg^{-1})$$
Similarly, an automorphism $\varphi$ induces a permutation matrix $P_\varphi$
(relabeling vertices by $v\mapsto \varphi(v)$), and again
$P_\varphi^{-1}\,A(G;X)\,P_\varphi \;=\; A(G;\varphi(X))$.
Conjugating by $P_\varphi$ yields
$A(G;\varphi(U))=A(G;\varphi(S))A(G;\varphi(T))$.
\end{proof}

\begin{cor}
Assume that $G$ is a finite group.
If $(S,T,U)$ is factorable in $G$ and $g \in G$, then $(gSg^{-1}, gTg^{-1}, gUg^{-1})$ is factorable in $G$.
\end{cor}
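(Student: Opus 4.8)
The plan is to read the statement off directly from Lemma~\ref{prop:stability} together with the definition of factorability. By definition, saying that $(S,T,U)$ is factorable in $G$ means precisely that $S,T,U$ are symmetric subsets of $G\sm\{e\}$ satisfying $A(G;U)=A(G;S)A(G;T)$. So for the conjugated triple there are exactly two things to verify: first, that $gSg^{-1}$, $gTg^{-1}$, $gUg^{-1}$ are again symmetric subsets of $G\sm\{e\}$; and second, that the adjacency identity persists after conjugation.

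For the first point I would observe that conjugation by $g$ is a bijection of $G$ that fixes $e$ (since $geg^{-1}=e$), hence maps $G\sm\{e\}$ onto itself, and that it commutes with inversion, $(gxg^{-1})^{-1}=gx^{-1}g^{-1}$, so it carries symmetric sets to symmetric sets. Consequently $gSg^{-1}$, $gTg^{-1}$, and $gUg^{-1}$ are symmetric subsets of $G\sm\{e\}$ whenever $S,T,U$ are.

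For the second point I would invoke Lemma~\ref{prop:stability} verbatim: since $(S,T,U)$ is factorable, that lemma already supplies $A(G;gUg^{-1})=A(G;gSg^{-1})\,A(G;gTg^{-1})$ for every $g\in G$. Combining this with the first point shows that the conjugated triple meets every requirement of the definition, and therefore $(gSg^{-1},gTg^{-1},gUg^{-1})$ is factorable in $G$. There is no genuine obstacle here: the substantive content is already packaged in Lemma~\ref{prop:stability}, and the only thing needing a (one-line) argument is the routine bookkeeping that conjugation preserves symmetry and the exclusion of the identity element.
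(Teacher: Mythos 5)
Your proposal is correct and follows exactly the route the paper intends: the corollary is stated without proof as an immediate consequence of Lemma~\ref{prop:stability}, and your argument is precisely that, supplemented by the routine (and welcome) check that conjugation by $g$ preserves symmetry and the exclusion of the identity, so the conjugated triple satisfies the definition of factorability.
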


We adopt the following terminology from \cite[§1]{KreherPatersonStinson2025}.
Although the terminology was originally introduced for near-factorizations, it can be naturally extended to our setting.

\begin{defn}
Let $G$ be a group and let $(S,T,U)$ and $(S',T',U')$ be factorable in $G$.
We say they are \defin{equivalent} if there exists $\varphi\in\Aut(G)$ such that
$(S',T',U')=(\varphi(S),\varphi(T),\varphi(U))$.
\end{defn}



\begin{thm}\label{thm:normal}
Let $G$ be a finite group. 
Suppose symmetric subsets $S,T,U\subseteq G\sm\{e\}$ are unions of conjugacy classes. Then $G$ has the factorization
$A(G;U)=A(G;S)A(G;T)$ if and only if $\chi(U)=\chi(S)\,\chi(T)/\chi(1)$ for every $\chi\in\mathrm{Irr}(G)$,
where $\chi(X)\coloneqq\sum_{x\in X}\chi(x)$. In particular for every non-trivial $\chi$ with $\chi(1)=1$,
\[
|\chi(U)|=|\chi(S)|\,|\chi(T)|\le |S|\,|T|=|U|.
\]
\end{thm}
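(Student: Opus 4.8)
The plan is to translate the matrix identity into the center of the group algebra and then diagonalize it through the irreducible representations. By Theorem~\ref{thm:main_1}, $A(G;U)=A(G;S)A(G;T)$ is equivalent to $\widehat S\,\widehat T=\widehat U$ in $\mathbb{C}[G]$, where $\widehat X:=\sum_{x\in X}x$. The hypothesis that $S,T,U$ are unions of conjugacy classes says precisely that $\widehat S,\widehat T,\widehat U$ lie in $Z(\mathbb{C}[G])$, so the entire question takes place in the center.

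Next I would feed this identity into each irreducible representation $\rho_\chi\colon\mathbb{C}[G]\to\mathrm{End}(V_\chi)$ for $\chi\in\mathrm{Irr}(G)$. Since $\widehat X$ is central and $V_\chi$ is irreducible, Schur's lemma forces $\rho_\chi(\widehat X)=\lambda_\chi(X)\,\mathrm{id}_{V_\chi}$ for a scalar $\lambda_\chi(X)$; taking traces gives $\chi(1)\,\lambda_\chi(X)=\sum_{x\in X}\chi(x)=\chi(X)$, that is, $\lambda_\chi(X)=\chi(X)/\chi(1)$. Because $\mathbb{C}[G]\cong\bigoplus_\chi M_{\chi(1)}(\mathbb{C})$ with the $\rho_\chi$ as the block projections, the map $\bigoplus_\chi\rho_\chi$ is faithful; hence $\widehat S\,\widehat T=\widehat U$ holds if and only if $\rho_\chi(\widehat S)\rho_\chi(\widehat T)=\rho_\chi(\widehat U)$ for every $\chi$, equivalently $\lambda_\chi(S)\lambda_\chi(T)=\lambda_\chi(U)$ for every $\chi$. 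Substituting the formula for $\lambda_\chi$ and clearing one factor of $\chi(1)$ produces exactly $\chi(U)=\chi(S)\chi(T)/\chi(1)$, which gives the stated equivalence.

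For the final assertion, assume the factorization holds and let $\chi$ be a nontrivial character with $\chi(1)=1$, so $\chi$ is a homomorphism $G\to\mathbb{C}^\times$. The displayed identity then reads $\chi(U)=\chi(S)\chi(T)$, so $|\chi(U)|=|\chi(S)|\,|\chi(T)|$. Each value $\chi(s)$ is a root of unity, so the triangle inequality yields $|\chi(S)|=\bigl|\sum_{s\in S}\chi(s)\bigr|\le|S|$, and likewise $|\chi(T)|\le|T|$. Finally, by Theorem~\ref{thm:main_1} the factorization carries the unique-representation property, so Lemma~\ref{lem:product_2} gives $|U|=|S|\,|T|$, completing the chain $|\chi(U)|=|\chi(S)|\,|\chi(T)|\le|S|\,|T|=|U|$.

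I expect the only delicate point to be the bookkeeping of normalizations — keeping the factors of $\chi(1)$ straight when passing between the central scalars $\lambda_\chi$ and the character sums $\chi(X)$ — rather than anything structurally deep, since the representation-theoretic inputs used (Schur's lemma and the semisimple decomposition of $\mathbb{C}[G]$) are entirely standard.
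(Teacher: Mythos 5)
Your proposal is correct and follows essentially the same route as the paper: pass to the central elements $\sum_{x\in X}x$ in $\mathbb{C}[G]$, apply Schur's lemma to get the scalars $\chi(X)/\chi(1)$, use faithfulness of the sum of the irreducibles (the paper phrases this via the regular-representation embedding, you via the Wedderburn decomposition) for the equivalence, and conclude the linear-character inequality from $|\chi(X)|\le|X|$ together with $|U|=|S||T|$ from the unique-representation property. Your explicit citation of Lemma~\ref{lem:product_2} for $|U|=|S|\,|T|$ is, if anything, slightly more careful than the paper's tacit use of it.
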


\begin{proof}
For $X\subseteq G$, set $\sigma_X\coloneqq  \sum_{x\in X}x\in\mathbb{C}[G]$.
If $X$ is a union of conjugacy classes then $\sigma_X$ is central in $\mathbb{C}[G]$.
Let $\rho$ be an irreducible complex representation with character $\chi$ and degree $\chi(1)$.
Since $\sigma_X$ is central, Schur's Lemma implies $\rho(\sigma_X)=\lambda_X(\chi)\,I_{\chi(1)}$
for some scalar $\lambda_X(\chi)$.
Taking traces gives
\[
\chi(X)=\sum_{x\in X}\chi(x)=\mathrm{tr}\bigl(\rho(\sigma_X)\bigr)
=\mathrm{tr}\bigl(\lambda_X(\chi)I_{\chi(1)}\bigr)=\lambda_X(\chi)\chi(1),
\]
so $\lambda_X(\chi)=\chi(X)/\chi(1)$.
Now $A(G;X)$ corresponds to $\rho(\sigma_X)$ under the regular representation embedding,
so for every $\chi\in\mathrm{Irr}(G)$ we have the eigenvalue identity
\[
\rho(\sigma_U)=\rho(\sigma_S)\rho(\sigma_T)
\iff
\lambda_U(\chi)=\lambda_S(\chi)\lambda_T(\chi).
\]
Substituting $\lambda_X(\chi)=\chi(X)/\chi(1)$ yields
\[
\frac{\chi(U)}{\chi(1)}=\frac{\chi(S)}{\chi(1)}\cdot\frac{\chi(T)}{\chi(1)}
\iff
\chi(U)=\frac{\chi(S)\chi(T)}{\chi(1)}.
\]
This holds for all $\chi\in\mathrm{Irr}(G)$ if and only if $A(G;U)=A(G;S)A(G;T)$.
Finally, for $\chi(1)=1$ we have $|\chi(X)|\le |X|$, hence
$|\chi(U)|=|\chi(S)\chi(T)|\le |S||T|=|U|$.
\end{proof}

\begin{lem}\label{lem:xUx=U}
Let $U\subseteq G\sm\{e\}$ be a symmetric subset of a group $G$. 
If $G$ has an involution $x\notin U$ such that $xUx=U$, then $A(G;U)$ has a factorization. 
\end{lem}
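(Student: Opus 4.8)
The plan is to produce an explicit factorization in which one of the two factors is a singleton, so that the uniqueness-of-representation condition in Theorem~\ref{thm:main_1}(2) holds automatically. Concretely, I would take $S=\{x\}$ and $T=xU$, and then check that $(S,T,U)$ satisfies condition~(2) of Theorem~\ref{thm:main_1}. This is the same idea as the earlier Example with $G=N\rtimes\langle x\rangle$ and $T=Ux$, now carried out without any semidirect-product assumption.

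First I would confirm that $S$ and $T$ are legitimate, i.e. symmetric subsets of $G\setminus\{e\}$. For $S=\{x\}$ this is clear: $x\neq e$ since $x$ is an involution, and $x^{-1}=x$. For $T=xU$, the one fact I need is that $xUx=U$ together with $x^{2}=e$ forces $xU=Ux$ (multiply $xUx=U$ on the left by $x$). Granting this, $T^{-1}=U^{-1}x^{-1}=Ux=xU=T$ because $U$ is symmetric, so $T$ is symmetric; and $e\notin T$, for $xu=e$ would give $u=x\in U$, contradicting $x\notin U$.

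Next I would verify the product and uniqueness conditions. Using $x^{2}=e$, we get $ST=\{x\}(xU)=x^{2}U=U$, so $U=ST$. Uniqueness is immediate: since $|S|=1$, for each $u\in U$ the only pair $(s,t)\in S\times T$ that could satisfy $st=u$ is $(x,\,xu)$, and indeed $xu\in T$ with $x\cdot(xu)=u$. Hence every $u\in U$ has a unique representation $u=st$ with $s\in S$, $t\in T$, and Theorem~\ref{thm:main_1} yields $A(G;U)=A(G;\{x\})\,A(G;xU)$, which is the desired factorization.

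I do not expect a serious obstacle here: the only place requiring care is the symmetry of $T=xU$, which hinges entirely on the identity $xU=Ux$, a one-line consequence of $xUx=U$ and $x^{2}=e$. Everything else is bookkeeping, and the singleton choice of $S$ disposes of the usual difficulty — forcing uniqueness of factorizations — for free.
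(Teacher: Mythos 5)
Your proposal is correct and is essentially the paper's own proof: the paper also takes $S=\{x\}$ and $T=xU$, verifies the symmetry of $T$ from $x^{2}=e$ and $xUx=U$, and concludes via Theorem~\ref{thm:main_1} using the uniqueness that the singleton factor provides. Your check that $e\notin T$ (via $x\notin U$) is a small piece of bookkeeping the paper leaves implicit, but otherwise the arguments coincide.
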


\begin{proof}
We apply \Cref{thm:main_1} with $S\coloneqq\{x\}$ and $T\coloneqq xU$.  Since $x$ has order $2$ and $xUx=U$, we have that for all $u \in U$, $(xu)^{-1} = u^{-1}x = x(xu^{-1}x) = x(xux)^{-1} \in xU$.  So both $S=\{x\}$ and $T=xU$ are symmetric subsets of $G$.  
One can see that 
$ST=\{x\}\cdot (xU)=U$, and for each $u \in U$, any representation $u=st$ with $s\in S$, $t\in T$ must have $s=x$ and $t=xu$.  Thus each $u\in U$ has a unique representation $u=st$ with $s\in S$, $t\in T$.
By~\Cref{thm:main_1}, $A(G;U)=A(G;S)A(G;T)=A(G;\{x\})A(G;Ux)$.
\end{proof}

\begin{cor}\label{cor:xUx=x_abelian}
Let $U\subseteq G\sm\{e\}$ be a symmetric subset of an abelian group $G$. 
If $G$ has an involution $x\notin U$, then $A(G;U)$ has a factorization. 
\end{cor}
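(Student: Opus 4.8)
The plan is to obtain this immediately from \Cref{lem:xUx=U}. That lemma requires an involution $x\notin U$ with $xUx=U$, and in an abelian group the conjugation condition is automatic: for any $u\in U$ we have $xux=x^{2}u=u$ since $x$ is an involution, so $xUx=U$ as sets. Together with the standing assumption that $U$ is symmetric and the hypothesis $x\notin U$, every condition of \Cref{lem:xUx=U} is satisfied, and it yields the factorization $A(G;U)=A(G;\{x\})\,A(G;Ux)$.

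If one prefers a self-contained argument that does not cite \Cref{lem:xUx=U} directly, take $S\coloneqq\{x\}$ and $T\coloneqq xU=Ux$ (the two coincide because $G$ is abelian). Since $x^{2}=e$, for $u\in U$ we have $(xu)^{-1}=u^{-1}x^{-1}=u^{-1}x\in xU$ because $u^{-1}\in U$, so $T$ is symmetric; and $ST=xU=U$. As $S$ is a singleton, each $u\in U$ has the unique representation $u=x\cdot(xu)$ with first factor in $S$ and second in $T$. By \Cref{thm:main_1} it follows that $(S,T,U)$ is factorable in $G$, that is, $A(G;U)=A(G;S)A(G;T)$.

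There is essentially no obstacle: the statement is just the specialization of \Cref{lem:xUx=U} to abelian $G$, where commutativity renders the conjugation hypothesis vacuous. The only minor points to verify are that $S,T\subseteq G\sm\{e\}$, namely that $x\neq e$ (true since $x$ is an involution) and that $e\notin Ux$ (otherwise $e=ux$ would force $u=x^{-1}=x\in U$, contradicting $x\notin U$).
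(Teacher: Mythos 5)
Your proposal is correct and matches the paper's (implicit) argument: the corollary is stated as an immediate consequence of \Cref{lem:xUx=U}, since in an abelian group $xUx=x^{2}U=U$ holds automatically for an involution $x$, which is exactly your observation. The extra checks you include ($x\neq e$, $e\notin Ux$, symmetry of $xU$) are the same ones carried out in the lemma's proof, so nothing further is needed.
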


When $(S,T,U)$ is factorable in $G$ and $x \in G - (S \cup T)$ is an element of order $2$ for which $xSx=S$ and $xTx=T$, then a similar argument to \Cref{lem:xUx=U} shows that $Sx$ and $xT$ will be symmetric subsets of $G$.  If this occurs, we will say that the factorizations $(S,T,U)$ and $(Sx,xT,U)$ are \defin{involution-equivalent}.  It is possible for two inequivalent factorizations to be involution-equivalent.  

\begin{exa}
Let $D_{10}=\langle r,s \mid r^{5}=e,\ s^{2}=e,\ srs=r^{-1}\rangle$.
We set
\[
S\coloneqq \{r,r^{4}\},\qquad T\coloneqq \{r^{2},r^{3}\},\qquad
U\coloneqq \{r,r^{2},r^{3},r^{4}\}=\langle r\rangle\setminus\{e\}.
\]
One can compute the products and get the following table:
\[
\begin{array}{c|cc}
st & t=r^{2} & t=r^{3}\\ \hline
s=r & r^{3} & r^{4}\\
s=r^{4} & r & r^{2}
\end{array}
\]
Hence $ST=\{r,r^{2},r^{3},r^{4}\}=U$, and each $u\in U$ occurs \emph{exactly once} as $u=st$ with
$(s,t)\in S\times T$. Therefore, by \Cref{thm:main_1},
we infer that $(S,T,U)$ is factorable in $D_{10}$.
Next let $x\coloneqq  s$ (an involution). Conjugation by $s$ inverts rotations gives us
$s r^{k} s=r^{-k}$ for $k\in\mathbb Z$.
Hence $xSx=S$ and $xTx=T$.
Next we define
\[
S'\coloneqq Sx=\{rs,\ r^{4}s\},\qquad T'\coloneqq xT=\{sr^{2},\ sr^{3}\}.
\]
Then $S'$ and $T'$ consist only of reflections, and for any $s_0\in S$, $t_0\in T$ we have $(s_0x)(xt_0)=s_0(x^{2})t_0=s_0t_0$.
Thus the map $(s_0,t_0)\mapsto (s_0x,xt_0)$ is a bijection $S\times T\to S'\times T'$ that preserves
the product, so $U$ has unique representations from $S'\times T'$ as well. Hence $(S',T',U)$ is factorable. By definition, $(S,T,U)$ and $(S',T',U)$ are
\emph{involution-equivalent} via $x=s$.
We note that for odd $n$, every automorphism of $D_{2n}$ maps rotations to rotations and reflections to reflections. In particular, no automorphism can send a set of
\emph{rotations} (like $S$ and $T$) to a set of \emph{reflections} (like $S'$ and $T'$).
Therefore there is no $\varphi\in\Aut(D_{10})$ with $(S',T',U)=(\varphi(S),\varphi(T),\varphi(U))$,
so the two factorizations are \emph{inequivalent}.
\end{exa}

A natural question is whether every Cayley graph of a finite group admits a factorization.
The answer is no. In fact, Maghsoudi et al.~\cite{maghsoudi2023matrix} proved that the complete
graph $K_n$ admits a factorization if and only if $n \equiv 1 \pmod{4}$ (equivalently, $n=4k+1$ for some $k\in\mathbb{N}$).
Since $K_{|G|} = A(G; G\sm\{e\})$, it follows that this Cayley graph is factorable precisely when
$|G|\equiv 1 \pmod{4}$, and is not factorable otherwise.
In the next theorem, we show that for every finite group $G$ there exists a generating set $S\subseteq G$
(with $e\notin S$ and $S=S^{-1}$) such that $A(G;S)$ does not admit a factorization.

\begin{thm}
Let $G$ be a finite group.
Then there exists a symmetric subset $U\subseteq G\sm\{e\}$ such that $\cay(G;U)$ has no factorization.
\end{thm}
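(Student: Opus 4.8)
The plan is to split on the residue of $|G|$ modulo $4$ and exhibit, in each case, an explicit symmetric $U$ for which a short counting argument forbids a factorization; I will assume $|G|\ge 2$, the trivial group being a degenerate exception (there the only symmetric subset of $G\sm\{e\}$ is $\varnothing$, whose Cayley graph has zero adjacency matrix, and $0=0\cdot 0$). If $|G|\not\equiv 1\pmod 4$, I would simply take $U=G\sm\{e\}$, so that $\cay(G;U)=K_{|G|}$. Any Cayley‑by‑Cayley factorization of this graph is in particular a matrix‑product factorization of $K_{|G|}$ by two simple graphs, which by the theorem of Maghsoudi et al.~\cite{maghsoudi2023matrix} quoted above cannot exist unless $|G|\equiv 1\pmod 4$; so this case is immediate from a cited result.

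The remaining case $|G|\equiv 1\pmod 4$ is the substantive one, and here $|G|$ is odd, so $G$ has no element of order $2$. Consequently $x\mapsto x^{-1}$ is a fixed‑point‑free involution on any symmetric subset of $G\sm\{e\}$, so every such subset has even cardinality. I would then take $U\coloneqq\{u,u^{-1}\}$ for an arbitrary $u\in G\sm\{e\}$, noting $u\ne u^{-1}$ and $|U|=2$. If $(S,T,U)$ were factorable in $G$, then $S,T\ne\varnothing$ (since $A(G;U)\ne 0$), and by \Cref{thm:main_1} together with \Cref{lem:product_2} we would have $|U|=|S|\,|T|$. But $|S|$ and $|T|$ are positive even integers, so $4\mid |S|\,|T|=|U|=2$ — a contradiction. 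Hence $\cay(G;\{u,u^{-1}\})$ has no factorization, and combining the two cases finishes the proof.

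I do not expect a genuine obstacle; the argument is short. The two points that need care are (i) using the $K_n$ result only in the forbidding direction, which is legitimate precisely because a Cayley‑by‑Cayley factorization is a special matrix‑product factorization; and (ii) the parity bookkeeping, which rests solely on the fact that in an odd‑order group the non‑identity elements split into inverse‑pairs, so every symmetric connection set has even size while the target $U$ has size $2$. If one wishes $U$ to also \emph{generate} $G$, to match the ``generating set'' phrasing preceding the theorem, then in the second case I would start from a symmetric generating set and, if its size is $\equiv 0\pmod 4$, adjoin one further inverse‑pair $\{v,v^{-1}\}$ with $v\notin U$ to bring its size to $\equiv 2\pmod 4$; the same counting argument then applies verbatim.
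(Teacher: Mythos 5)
Your proof is correct and follows essentially the same route as the paper: the same case split on $|G|\bmod 4$ using the $K_n$ result of Maghsoudi et al.\ for $|G|\not\equiv 1\pmod 4$, and for $|G|\equiv 1\pmod 4$ the same choice of $U$ as an inverse pair $\{u,u^{-1}\}$ together with the consequence $|U|=|S|\,|T|$ of \Cref{thm:main_1}. The only difference is cosmetic and lies in the last step: the paper argues that $|U|=2$ forces a factor of size one, hence an involution, contradicting odd order, while you observe that both $|S|$ and $|T|$ are even so $4\mid 2$ --- both contradictions rest on the same fact that an odd-order group has no involutions.
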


\begin{proof}
If $|G|\not\equiv 1 \pmod{4}$, then $A(G;G\sm\{e\})$ admits no factorization by \cite[Theorem 1.2]{maghsoudi2023matrix}.
So we can assume that $|G|\equiv 1 \pmod{4}$.
Let $g$ be an element of a prime order $p$, where $p>2$.
Let $U=\{g,g^{-1}\}$.
If  $A(G;U)=A(G;S)\,A(G;T)$ with $S,T\subseteq G\sm\{e\}$ symmetric, then by \Cref{thm:main_1} and then by \Cref{lem:product},
we have $|U|=|S|\,|T|$.
Since $|U|$ is prime, one of $|S|$ or $|T|$ must be $1$. Suppose $|S|=1$. Then $S=\{s\}$, and because $S$ is symmetric and $e\notin S$, the element $s$ is an involution ($s=s^{-1}\neq e$).
This means that $2$ divides the order of $G$. But the order of $G$ is an odd number, which yields a contradiction.
\end{proof}

It follows from \cite[Thm.~3]{decaen}, that if $G$ has an elementary abelian quotient $C_p^m$,
then any near–factorization enforces congruences on $|S|,|T|$ modulo powers of $p$ that fail in many cases (e.g., for most abelian $p$–groups of sufficient rank); hence $A(G;G\sm\{e\})$ does not factor in those cases.

A natural question is whether every group $G$ has a symmetric subset $U$ of $G\sm \{e\}$ such that $\cay(G;U)$ admits a factorization.
In the following, we give an affirmative answer to the question.

\begin{thm}
Let $G$ be a finite group of order at least $4$.
Then there exists a symmetric subset $U\subseteq G\sm \{e\}$  such that $\cay(G;U)$ admits a factorization.
\end{thm}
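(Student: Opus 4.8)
The plan is to split the argument according to the parity of $|G|$, using in the odd case the following elementary \emph{pull-back principle}: if $H\le G$ and $(S,T,U)$ is factorable in $H$, then $(S,T,U)$ is factorable in $G$. This is immediate from part (3) of \Cref{thm:main_1}, since the identity $\bigl(\sum_{s\in S}s\bigr)\bigl(\sum_{t\in T}t\bigr)=\sum_{u\in U}u$ already holds in $\Z[H]$, which sits inside $\Z[G]$ as a subring. So it suffices to exhibit, inside a suitable subgroup, a factorable triple with nonempty $U$.

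The abelian building block I would establish first is: every finite \emph{abelian} group $H$ with $|H|$ odd and $|H|\ge 5$ admits a factorable triple. Since $|H|$ is odd there is no $2$-torsion, so the $|H|-1\ge 4$ non-identity elements split into inverse pairs $\{h,-h\}$ of size $2$; choose representatives $a,b$ of two distinct such pairs, so that $a\ne\pm b$ and $2a,2b\ne 0$. Put $S\coloneqq\{a,-a\}$, $T\coloneqq\{b,-b\}$, $U\coloneqq S+T=\{a+b,\ a-b,\ -a+b,\ -a-b\}$. A routine check shows that any coincidence among these four elements, or the vanishing of any one of them, forces one of the relations $2a=0$, $2b=0$, $a=b$, $a=-b$, each of which is excluded here; hence $|U|=4=|S|\,|T|$, $0\notin U$, and $U=-U$. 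Since $U=S+T$, the addition map $S\times T\to U$ is a surjection between $4$-element sets, hence a bijection, so every $u\in U$ has a unique representation, and \Cref{thm:main_1} gives that $(S,T,U)$ is factorable in $H$. (The bound $|H|\ge 5$ is necessary, since $\Z_3$ admits no nontrivial factorable triple.)

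Now the case split. If $|G|$ is even, Cauchy's theorem gives an involution $x\in G$, and since $|G|\ge 4$ there are at least two conjugacy classes other than $\{e\}$; as $x$ lies in exactly one conjugacy class, we may pick such a class $C$ with $x\notin C$. Then $U\coloneqq C\cup C^{-1}$ is symmetric, contains neither $e$ nor $x$ (the latter because $x=x^{-1}$), and satisfies $xUx=U$ because it is a union of conjugacy classes; \Cref{lem:xUx=U} then yields a factorization of $A(G;U)$. If $|G|$ is odd, then $|G|\ge 5$, and I would locate an abelian subgroup $H\le G$ of odd order at least $5$: if some prime $q\ge 5$ divides $|G|$, take $H\coloneqq\langle g\rangle$ for an element $g$ of order $q$; otherwise $|G|$ is a power of $3$, hence (being $\ge 5$) of order $\ge 9$, and a $3$-group of order $\ge 9$ has a subgroup of order $9$, which is abelian. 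In either case the building block supplies a factorable triple in $H$, and the pull-back principle transports it to $G$.

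The computations involved — the coincidence check in the abelian block and the count of conjugacy classes — are routine, and I foresee no essential difficulty; the proof is largely a matter of packaging \Cref{thm:main_1} and \Cref{lem:xUx=U} with elementary group theory. The one genuinely structural input, and the closest thing to an obstacle, is the claim used in the odd case that an odd group of order at least $5$ always contains an abelian subgroup of order at least $5$; this is handled above by separating the cases ``$|G|$ has a prime factor $\ge 5$'' and ``$G$ is a $3$-group of order $\ge 9$'', the latter using the standard fact that a nontrivial $p$-group of order $\ge p^{2}$ has a subgroup of order $p^{2}$.
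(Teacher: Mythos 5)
Your proof is correct, and while it follows the paper's overall skeleton -- splitting on whether $G$ contains an involution -- it packages both cases somewhat differently. In the even case the paper simply takes $U=G\setminus\{e,s\}$, $S=\{s\}$, $T=sU$ and checks uniqueness directly, whereas you take $U=C\cup C^{-1}$ for a conjugacy class $C$ avoiding the involution and invoke \Cref{lem:xUx=U}; both rest on the same $S=\{x\}$, $T=xU$ mechanism, and your counting step is sound (a group with only one non-identity conjugacy class has order $2$, so $|G|\ge 4$ indeed gives a class $C\ne\{e\}$ missing $x$). In the odd case the paper works directly inside $G$: it uses $S=\{a,a^{-1}\}$, $T=\{a^{2},a^{-2}\}$, $U=\{a^{\pm1},a^{\pm3}\}$ when some element has order at least $5$, and otherwise (exponent $3$) takes $S=\{a,a^{-1}\}$, $T=\{b,b^{-1}\}$, $U=\{ab,ab^{-1},a^{-1}b,a^{-1}b^{-1}\}$ for two distinct non-identity elements. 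You instead isolate a subgroup pull-back principle (immediate from \Cref{thm:main_1}, and correct, since the group-algebra identity in $\Z[H]$ persists in $\Z[G]$ and the uniqueness of representations is intrinsic to $H$) and verify one abelian building block, applied to an abelian subgroup of odd order at least $5$ obtained either as a cyclic subgroup of prime order $q\ge 5$ or as an order-$9$ subgroup of a $3$-group. What your route buys: the odd case is treated uniformly, and in the exponent-$3$ situation everything happens inside an abelian subgroup, which sidesteps the check the paper leaves implicit there (for non-commuting $a,b$ the symmetry of $U$ and uniqueness of representations are not automatic). What it costs: you lean on standard structural inputs (Cauchy's theorem, the existence of a subgroup of order $p^{2}$ in a $p$-group of order at least $p^{2}$, and commutativity of groups of order $p^{2}$), where the paper's constructions are fully explicit and avoid subgroup-existence theorems.
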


\begin{proof}
We split the proof into the two cases separately.
\begin{itemize}
\item $G$ contains an involution.
Assume $s\in G$ with $s^2=e$ and $s\neq e$.
Set
\[
U\coloneqq G\sm\{e,s\}, \qquad S\coloneqq \{s\}, \qquad T\coloneqq sU=\{su\mid \ u\in U\}.
\]
Then $U$ is symmetric and $e\notin U$. Also $S$ and $T$ are symmetric.
For each $u\in U$ we have the \emph{unique} representation
\[
u \;=\; s\cdot (su)\qquad\text{with }s\in S,\ su\in T.
\]
Conversely, the only elements not in $U$ are $e$ and $s$: neither has a representation $st$
with $s\in S$, $t\in T$ (for $e$ we would need $t=s\notin T$.
For $s$ we would need $t=e\notin T$).
Therefore it follows from \Cref{thm:main_1} that $A(G;U)=A(G;S)A(G;T)$.
\item  $G$ has no involution (so $|G|$ is odd).
\begin{itemize}
\item $G$ is cyclic.
Write $G=\langle a\rangle$. We note that the order of $a$ should be at least $5$. 
Next we define
\[
S\coloneqq \{a,a^{-1}\},\qquad T\coloneqq \{a^{2},a^{-2}\},\qquad
U\coloneqq \{a,a^{-1},a^{3},a^{-3}\}.
\]
Then $S,T,U$ are symmetric and $e\notin S\cup T\cup U$.
Each $u\in U$ is obtained \emph{uniquely} as $u=st$ with $s\in S$, $t\in T$:
\[
a = a^{-1}\cdot a^{2},\quad a^{-1}=a\cdot a^{-2},\quad
a^{3}=a\cdot a^{2},\quad a^{-3}=a^{-1}\cdot a^{-2}.
\]
No other element of $G$ is represented. Thus $A(G;U)=A(G;S)A(G;T)$.
\item  $G$ is not cyclic.
If $G$ has an element $a$ of order at least $5$, then we again set
\[
S\coloneqq \{a,a^{-1}\},\qquad T\coloneqq \{a^{2},a^{-2}\},\qquad
U\coloneqq \{a,a^{-1},a^{3},a^{-3}\}.
\]
So we can assume that every element has order exactly $3$.
Since $|G|\geq 5$, there are distinct elements $a,b \in G\sm \{e\}$.
Then we define 
\[
S\coloneqq \{a,a^{-1}\},\qquad T\coloneqq \{b,b^{-1}\},\qquad
U\coloneqq \{ab,ab^{-1},a^{-1}b,a^{-1}b^{-1}\}.
\]
Then $S,T,U$ are symmetric and $e\notin S\cup T\cup U$.
In addition one can see that each $u\in U$ is obtained \emph{uniquely} as $u=st$ with $s\in S$, $t\in T$.\qedhere
\end{itemize}
\end{itemize}
\end{proof}

\begin{lem}{\rm\cite[Theorem 4]{Manjunatha}}\label{odd-edges}
Let $X, Y, Z$ be simple\footnote{no loops, and no multiple edge} graphs such that $A(X)=A(Y)A(Z)$, where $A(K)$ is the adjacency matrix of a graph $K$.
Then $ X $ has an even number of edges.
\end{lem}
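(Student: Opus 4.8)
The plan is to write $2|E(X)|$ as a trace and show it is divisible by $4$. Since $X,Y,Z$ are simple, $A(X),A(Y),A(Z)$ are symmetric $0\!-\!1$ matrices with zero diagonal; put $B=A(Y)$ and $C=A(Z)$. Because $A(X)$ is symmetric, $0\!-\!1$, and has zero diagonal, $\operatorname{tr}(A(X)^2)=\sum_{i,j}A(X)_{ij}^2=\sum_{i,j}A(X)_{ij}=2|E(X)|$, so it suffices to prove $\operatorname{tr}(A(X)^2)\equiv 0\pmod 4$. Substituting $A(X)=BC$ gives $\operatorname{tr}(A(X)^2)=\operatorname{tr}(BCBC)=\sum_{i,j,k,\ell}B_{ij}C_{jk}B_{k\ell}C_{\ell i}$, which, since $B$ and $C$ are $0\!-\!1$, is exactly the number of closed walks $i\to j\to k\to\ell\to i$ with $\{i,j\},\{k,\ell\}\in E(Y)$ and $\{j,k\},\{\ell,i\}\in E(Z)$.

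First I would record that $Y$ and $Z$ are edge-disjoint: since $X$ is simple, $0=\operatorname{tr}(A(X))=\operatorname{tr}(BC)=\sum_{i,k}B_{ik}C_{ki}=\sum_{i,k}B_{ik}C_{ik}=2|E(Y)\cap E(Z)|$. In any walk counted above, the absence of loops forces $i\ne j$, $j\ne k$, $k\ne\ell$, $\ell\ne i$, while edge-disjointness forces $i\ne k$ (otherwise $\{i,j\}\in E(Y)\cap E(Z)$) and $j\ne\ell$ (otherwise $\{j,k\}\in E(Y)\cap E(Z)$); hence the four vertices are pairwise distinct, and each counted walk traces out a genuine $4$-cycle whose edges alternate $Y,Z,Y,Z$.

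The key step is that these walks partition into classes of size exactly $4$. Given a counted walk $(i,j,k,\ell)$, the walks $(j,i,\ell,k)$, $(k,\ell,i,j)$, and $(\ell,k,j,i)$ are also counted — in each of them the first and third steps are $Y$-edges and the second and fourth steps are $Z$-edges, using only that the edges are undirected — they are pairwise distinct because $i,j,k,\ell$ are distinct, and passing from a walk to this quadruple of walks is a well-defined operation generating an action of a group of order $4$ all of whose orbits have size $4$. Concretely, a $4$-cycle with its edges labelled $Y$ or $Z$ is rigid, so it is represented by precisely the four walks obtained by choosing a starting vertex, after which the second vertex is forced to be the chosen vertex's $Y$-neighbour on the cycle. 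Hence $\operatorname{tr}(BCBC)=4N$, where $N$ is the number of such labelled alternating $4$-cycles, giving $2|E(X)|=4N$ and therefore $|E(X)|=2N$, which is even.

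The step I expect to be the main obstacle is this last one: verifying cleanly that the counted walks really do group into $4$-element classes — i.e. that two counted walks determine the same $Y/Z$-labelled $4$-cycle exactly when they differ by the symmetry described, so there is neither over- nor undercounting. This is where edge-disjointness of $Y$ and $Z$ is essential, since it is what excludes degenerate walks with repeated vertices that would otherwise spoil the count. Everything else reduces to the two short trace identities above.
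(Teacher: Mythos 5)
Your proof is correct, but note that the paper does not prove this lemma at all: it is imported by citation from \cite[Theorem 4]{Manjunatha}, so there is no in-paper argument to compare against, and what you have written functions as a self-contained replacement for that citation. The steps all check out: $\operatorname{tr}(A(X)^2)=2|E(X)|$ because $A(X)$ is symmetric, $0\!-\!1$-valued, with zero diagonal; $\operatorname{tr}(BC)=\operatorname{tr}(A(X))=0$ together with nonnegativity of the entries forces $E(Y)\cap E(Z)=\varnothing$; loop-freeness plus this edge-disjointness makes every nonzero term of $\operatorname{tr}(BCBC)$ a closed walk on four pairwise distinct vertices whose steps alternate $Y,Z,Y,Z$; and the identity together with the three maps $(i,j,k,\ell)\mapsto(j,i,\ell,k)$, $(i,j,k,\ell)\mapsto(k,\ell,i,j)$, $(i,j,k,\ell)\mapsto(\ell,k,j,i)$ forms a Klein four-group acting on the set of counted walks with no fixed points (a fixed point would force $i=j$, $i=k$, or $i=\ell$), so every orbit has size exactly $4$ and $2|E(X)|=\operatorname{tr}(BCBC)\equiv 0\pmod 4$. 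The obstacle you flag --- cleanly matching walks to labelled $4$-cycles without over- or undercounting --- is in fact immaterial: the fixed-point-free group action already gives divisibility by $4$, and you never need to pass to the cycles themselves. As a bonus, your argument yields the edge-disjointness of the two factors and the identity ``$|E(X)|$ equals twice the number of $Y/Z$-alternating $4$-cycles,'' which is more than the cited statement asserts.
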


\begin{lem}
Let $U\subseteq G\sm\{e\}$ be a symmetric subset of a group of order $n$. 
Suppose the Cayley graph $X=\cay(G;U)$ admits a factorization $A(X)=A(Y)A(Z)$ into adjacency matrices of simple graphs $Y,Z$ on the same vertex set.
If $n=2(2t+1)$, then $U$ contains an even number of involutions.
\end{lem}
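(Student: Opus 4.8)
The plan is to combine an edge count for $\cay(G;U)$ with the edge-parity obstruction of \Cref{odd-edges}, and then convert a parity statement about $|U|$ into one about involutions. First I would record that, since $e\notin U$ and $U=U^{-1}$, the graph $X=\cay(G;U)$ is a simple undirected $|U|$-regular graph on the $n=|G|$ vertices of $G$, so it has exactly $m=n|U|/2$ edges; this is an integer because $n=2(2t+1)$ is even.

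Next I would apply \Cref{odd-edges} to the hypothesized factorization $A(X)=A(Y)A(Z)$, with $X,Y,Z$ all simple, to conclude that $m$ is even. Writing $n=2(2t+1)$ gives $m=(2t+1)|U|$, and since $2t+1$ is odd, the statement ``$m$ even'' is equivalent to ``$|U|$ even''. It is precisely the hypothesis $n\equiv 2\pmod 4$ that makes this deduction possible: if $4\mid n$ then $m$ would be even regardless of $|U|$, and no conclusion about $|U|$ could be drawn.

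Finally I would translate ``$|U|$ even'' into the desired statement about involutions. Partition $U$ as $U=I\sqcup P$, where $I$ is the set of involutions lying in $U$ and $P=U\setminus I$ is the set of non-involutions in $U$. Since $U$ is symmetric and no element of $P$ equals its own inverse, inversion $u\mapsto u^{-1}$ restricts to a fixed-point-free involution on $P$, pairing its elements; hence $|P|$ is even. Therefore $|I|=|U|-|P|$ is even, which is exactly the claim.

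There is no substantial obstacle here; the only point requiring care is that \Cref{odd-edges} demands that all three of $X,Y,Z$ be simple graphs, so I would make explicit that $\cay(G;U)$ is simple (no loops since $e\notin U$, undirected since $U=U^{-1}$, and no parallel edges by construction), while $Y$ and $Z$ are simple by hypothesis, so the lemma applies as stated.
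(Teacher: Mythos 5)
Your proof is correct and follows essentially the same route as the paper: apply \Cref{odd-edges} to deduce that $|E(X)|=n|U|/2=(2t+1)|U|$ is even, conclude $|U|$ is even since $2t+1$ is odd, and then split $U$ into involutions and inverse-paired non-involutions to transfer the parity to the number of involutions. No gaps; the extra care about simplicity of $\cay(G;U)$ is a fine (if implicit in the paper) addition.
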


\begin{proof}
By \Cref{odd-edges}, $X$ has an even number of edges. Since $X$ is $|U|$-regular on $n$ vertices,
\[
|E(X)|=\frac{n\,|U|}{2}\ \ \text{is even}\ \ \Longleftrightarrow\ \ n\,|U|\equiv 0 \pmod 4.
\tag{$\ast$}
\]
Then $n=2(2t+1)$, so $(\ast)$ gives $2|U|\equiv 0\pmod 4$, hence $|U|$ is even.
Write $U=I\sqcup P$, where $I$ is the set of involutions in $U$, and $P$ consists of
non–self-inverse elements. Because $U=U^{-1}$ and $e\notin U$, the elements of $P$ split into
disjoint 2-element inverse pairs $\{g,g^{-1}\}$, so $|P|$ is even. Therefore
$|U|=|I|+|P|\equiv |I|\pmod 2$. Since $|U|$ is even, $|I|$ is even.
\end{proof}

\begin{lem}\label{thm:transversal}
Let $G$ be a finite group and let $U,S,T\subseteq G$.
Assume that $A(G;U)=A(G;S)\,A(G;T)$.
Let $H\le G$ and let $G/H$ be the set of right cosets.
Then for every $y\in G$ we have
\[
|U\cap yH|
\;=\;
\sum_{x\in G/H} |S\cap Hx|\cdot |T\cap x^{-1}yH|.
\]
\end{lem}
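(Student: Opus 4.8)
The plan is to translate the matrix identity $A(G;U)=A(G;S)A(G;T)$ back into the combinatorial counting statement of \Cref{thm:main_1}, namely that the coefficient function $N(g)=|\{(s,t)\in S\times T: st=g\}|$ equals $\mathbf 1_U(g)$ for all $g\in G$, and then sum this identity over the coset $yH$. Concretely, first I would fix $y\in G$ and write
\[
|U\cap yH|=\sum_{g\in yH}\mathbf 1_U(g)=\sum_{g\in yH}N(g)=\sum_{g\in yH}\bigl|\{(s,t)\in S\times T: st=g\}\bigr|=\bigl|\{(s,t)\in S\times T: st\in yH\}\bigr|.
\]
So the left side counts ordered pairs $(s,t)\in S\times T$ whose product lands in the coset $yH$.

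Next I would partition that set of pairs according to which right coset $s$ lives in. Writing $G$ as the disjoint union of the right cosets $Hx$ for $x\in G/H$ (here I am using the excerpt's convention that $G/H$ denotes the set of right cosets), every $s\in S$ lies in exactly one $Hx$. For a pair $(s,t)$ with $s\in Hx$, the condition $st\in yH$ is equivalent to $t\in s^{-1}yH$, and since $s\in Hx$ one has $s^{-1}\in x^{-1}H$, so $s^{-1}yH = x^{-1}yH$; the key point is that this coset depends only on $x$, not on the particular $s\in Hx$. Hence for fixed $x$ the number of admissible $t$ is exactly $|T\cap x^{-1}yH|$, independently of the choice of $s\in S\cap Hx$, and the number of admissible $s$ in that block is $|S\cap Hx|$. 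Summing the product over all $x\in G/H$ gives
\[
\bigl|\{(s,t)\in S\times T: st\in yH\}\bigr|=\sum_{x\in G/H}|S\cap Hx|\cdot|T\cap x^{-1}yH|,
\]
which combined with the first display yields the claim.

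The only genuinely delicate point is the coset bookkeeping: one must be careful that $H$ is a subgroup (so that $H x$ and $y H$ are honest cosets), that the relation $s^{-1} y H = x^{-1} y H$ holds because $H s^{-1} = H x^{-1}$ follows from $s\in Hx$, and that the factor $|T\cap x^{-1}yH|$ really is constant across the block $S\cap Hx$ — this constancy is what lets the double count factor as a product. I would also note in passing that the formula is independent of the choice of coset representatives $x$, since replacing $x$ by $hx$ with $h\in H$ leaves both $Hx$ and $x^{-1}yH$ unchanged. No representation theory is needed here; the whole proof rests on \Cref{thm:main_1} to get $N\equiv\mathbf 1_U$ and then on elementary partitioning of $S\times T$ by cosets.
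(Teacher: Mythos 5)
Your opening reduction is fine and matches the paper: by \Cref{thm:main_1} the hypothesis gives $N(g)=\mathbf 1_U(g)$, and summing over $yH$ yields $|U\cap yH|=|\{(s,t)\in S\times T:\ st\in yH\}|$. The gap is in the partitioning step. From $s\in Hx$ you conclude $s^{-1}yH=x^{-1}yH$, justified by ``$Hs^{-1}=Hx^{-1}$''; but what actually follows from $s\in Hx$ is $s^{-1}H=x^{-1}H$ (equality of \emph{left} cosets), and neither version gives $s^{-1}yH=x^{-1}yH$ in general: writing $s=hx$ with $h\in H$ one gets $s^{-1}yH=x^{-1}h^{-1}yH$, which equals $x^{-1}yH$ only when $y^{-1}h^{-1}y\in H$ (e.g.\ when $H$ is normal). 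So $|T\cap s^{-1}yH|$ need not be constant on the block $S\cap Hx$, and your closing remark that replacing $x$ by $hx$ leaves $x^{-1}yH$ unchanged fails for the same reason. A concrete instance satisfying all stated hypotheses: in $G=D_6=\langle a,b\mid a^3=b^2=e,\ bab=a^{-1}\rangle$ take $H=\{e,b\}$, $S=\{a,\ a^2b\}$, $T=\{b\}$, so $ST=\{ab,\ a^2\}=U$ with unique representations, hence $A(G;U)=A(G;S)A(G;T)$ by \Cref{thm:main_1}. For $y=a$ the left-hand side is $|U\cap aH|=|\{ab\}|=1$, while on the right the only coset meeting $S$ is $Ha=\{a,a^2b\}$, giving the single term $2\cdot|T\cap x^{-1}aH|$, which is $2$ for the representative $x=a$ and $0$ for $x=a^2b$; no choice of representatives produces $1$. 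So the step genuinely fails, not merely lacks justification.

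To be fair, the paper's own proof makes exactly the same move (``if $s\in Hx$, then $st\in yH$ iff $t\in x^{-1}yH$''), so you have faithfully reproduced the intended argument together with its flaw; the lemma as stated is only correct when the coset $x^{-1}yH$ is independent of the representative, e.g.\ when $H\trianglelefteq G$, in which case both your argument and the paper's go through verbatim. If you want a version valid for arbitrary $H\le G$, partition the pairs by the \emph{left} coset of $t$ instead: for $t\in xH$ one has $\{s:\ st\in yH\}=yHt^{-1}=yHx^{-1}$, which really does depend only on the coset $xH$, giving $|U\cap yH|=\sum_{xH}|T\cap xH|\cdot|S\cap yHx^{-1}|$; equivalently, partition $S$ by right cosets of the conjugate subgroup $yHy^{-1}$, for which $s\in (yHy^{-1})x$ does imply $s^{-1}yH=x^{-1}yH$. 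Either reformulation repairs both your proof and the paper's.
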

\begin{proof}
Let $N(g)\coloneqq \bigl|\{(s,t)\in S\times T\mid  st=g\}\bigr|$.
The identity $A(G;U)=A(G;S)A(G;T)$ is equivalent to saying that
$N(g)=1$ for $g\in U$ and $N(g)=0$ for $g\notin U$ (i.e., $U=ST$ with
unique representations and $e\notin U$).
Fix $y\in G$. Summing $N(g)$ over the coset $yH$ gives
\[
|U\cap yH|=\sum_{g\in yH} N(g)
=\sum_{g\in yH}\ \sum_{\substack{x\in G/H\\ s\in S\cap Hx}}\ \#\{t\in T\mid  st=g\}.
\]
If $s\in Hx$, then $st\in yH$ holds iff $t\in x^{-1}yH$.
Thus the inner count is $|T\cap x^{-1}yH|$, and summing over $s\in S\cap Hx$
yields the claimed formula.
\end{proof}

\section{Cayley Factorization on Abelian Groups}

In this section, we investigate how Cayley graphs of abelian groups can be factorized.
Throughout, we use additive notation for the group operation, with $0$ denoting the identity element.

\begin{defn}
Let $(G,+)$ be a (finite) abelian group and let $f,g \colon G \to \mathbb{C}$ be functions.
The \defin{convolution} $f * g : G \to \mathbb{C}$ is defined by
\[
(f * g)(x) \;=\; \sum_{y \in G} f(y)\,g(x-y)
\qquad\text{for all } x\in G.
\]
\end{defn}

\begin{thm}\label{thm:circulant-conv-1}
Let $G$ be a finite abelian group with symmetric subsets $S,T\subseteq G\sm  \{0\}$.
Then we have the following:
$A(G;S)_{i,j}=\mathbf 1_S(g_j-g_i)$, and
$A(G;T)_{i,j}=\mathbf 1_T(g_j-g_i)$,
where $\mathbf 1_X$ is the indicator function of $X$.
Furthermore for all $1\le i,j\le n$,
\begin{equation}\label{eq:conv-entry}
(A(G;S)A(G;T))_{i,j}
\;=\;(\mathbf 1_S * \mathbf 1_T)(g_j-g_i)
\;=\;N_{S,T}(g_j-g_i),
\end{equation}
where $N_{S,T}(u)\coloneqq \bigl|\{(s,t)\in S\times T\mid \ s+t=u\}\bigr|$ is the number of
representations of $u$ as a sum from $S$ and $T$.
\end{thm}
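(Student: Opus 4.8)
The plan is to verify the three claimed identities directly from the definitions, exploiting that in an abelian group the Cayley adjacency entries depend only on the difference of the indexing elements. First I would recall that by definition $A(G;S)_{i,j}=\mathbf 1_{\,g_i^{-1}g_j\in S}$, which in additive notation reads $\mathbf 1_S(g_j-g_i)$; since $S$ is symmetric this is also $\mathbf 1_S(g_i-g_j)$, confirming that the matrix is symmetric, though we do not even need symmetry for the entry formula itself. The same applies verbatim to $T$. This disposes of the first two displayed identities with essentially no work.

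The main content is equation \eqref{eq:conv-entry}. I would compute the $(i,j)$ entry of the product by the usual formula,
\[
(A(G;S)A(G;T))_{i,j}=\sum_{k=1}^{n} A(G;S)_{i,k}\,A(G;T)_{k,j}
=\sum_{g\in G}\mathbf 1_S(g-g_i)\,\mathbf 1_T(g_j-g),
\]
where I have reindexed the sum over $k$ as a sum over $g=g_k$ ranging over all of $G$. Now substitute $y=g-g_i$, so that $g=g_i+y$ and $g_j-g=(g_j-g_i)-y$; as $g$ runs over $G$ so does $y$. This turns the sum into $\sum_{y\in G}\mathbf 1_S(y)\,\mathbf 1_T\bigl((g_j-g_i)-y\bigr)$, which is exactly $(\mathbf 1_S*\mathbf 1_T)(g_j-g_i)$ by the definition of convolution. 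Finally, $(\mathbf 1_S*\mathbf 1_T)(u)=\sum_{y\in G}\mathbf 1_S(y)\mathbf 1_T(u-y)$ counts precisely the pairs $(y,u-y)$ with $y\in S$ and $u-y\in T$, i.e. the pairs $(s,t)\in S\times T$ with $s+t=u$; hence it equals $N_{S,T}(u)$, giving the last equality with $u=g_j-g_i$.

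There is no real obstacle here; the only point requiring a moment's care is the change of summation variable from the index $k$ (equivalently $g_k$) to the shifted variable $y=g_k-g_i$, together with checking that the resulting expression matches the stated definition of convolution on an abelian group rather than some reflected variant. Since the group is abelian this substitution is harmless and the reflection ambiguity is absorbed by the symmetry of $S$ and $T$, so no sign or orientation issue arises. I would present the computation in the order above: entry formulas for the factors, then the product computed via the reindexing, then the identification with $N_{S,T}$.
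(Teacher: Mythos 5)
Your argument is correct and follows essentially the same route as the paper: read off the entry formulas from the definition of the Cayley adjacency, expand the product entry as a sum over $k$, reindex via the shift $y=g_k-g_i$ (a bijection of $G$) to obtain the convolution $(\mathbf 1_S*\mathbf 1_T)(g_j-g_i)$, and then identify that value with $N_{S,T}(g_j-g_i)$. No gaps; this matches the paper's proof step for step.
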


\begin{proof}
Write $A_S\coloneqq A(G;S)$ and $A_T\coloneqq A(G;T)$. For $1\le i,j\le n$,
\[
(A_SA_T)_{i,j}
=\sum_{k=1}^n (A_S)_{i,k}\,(A_T)_{k,j}
=\sum_{k=1}^n \mathbf 1_S(g_k-g_i)\,\mathbf 1_T(g_j-g_k).
\]
Fix $i$ and set $x\coloneqq g_k-g_i$. Since $k\mapsto g_k$ is a bijection $[n]\!\to\! G$, the map $k\mapsto x$ is a bijection from $[n]$ onto $G$ (for each $x\in G$ there is a unique $k$ with $g_k=g_i+x$). Hence
\[
(A_SA_T)_{i,j}
=\sum_{x\in G} \mathbf 1_S(x)\,\mathbf 1_T\bigl((g_j-g_i)-x\bigr)
=(\mathbf 1_S * \mathbf 1_T)(g_j-g_i),
\]
where $(\mathbf 1_S * \mathbf 1_T)(y)\coloneqq \sum_{x\in G}\mathbf 1_S(x)\mathbf 1_T(y-x)$ is the group convolution on $G$.
Finally,
\[
(\mathbf 1_S * \mathbf 1_T)(u)
=\sum_{x\in G}\mathbf 1_S(x)\mathbf 1_T(u-x)
=\bigl|\{x\in S\mid \ u-x\in T\}\bigr|
=\bigl|\{(s,t)\in S\times T\mid \ s+t=u\}\bigr|
=N_{S,T}(u),
\]
so $(A_SA_T)_{i,j}=N_{S,T}(g_j-g_i)$.
\end{proof}

The study of factorizations in abelian groups is closely linked to the concept of Sidon pairs.
Let $ G $ be an abelian group (written additively).  
Two subsets $ S, T \subseteq G $ form a \defin{Sidon pair} if every sum $ s + t $ with $ s \in S $ and $ t \in T $ has a unique representation, that is,
\[
s_1 + t_1 = s_2 + t_2 \;\Longrightarrow\; (s_1, t_1) = (s_2, t_2).
\]
Equivalently, the multiset
$S + T \coloneqq \{\, s + t \mid s \in S,\, t \in T \,\}$
contains no repeated elements.

\begin{lem}\label{lem:sidon-pair}
Let $G$ be a finite abelian group (written additively). For non-empty subsets $S,T\subseteq G$, 
$N_{S,T}(u)\le 1$ for all $u\in G$
if and only if 
$(S-S)\cap(T-T)=\{0\}$,
where $$N_{S,T}(u)\coloneqq \bigl|\{(s,t)\in S\times T\mid \ s+t=u\}\bigr|.$$
\end{lem}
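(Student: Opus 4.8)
The plan is to prove both directions by relating the representation count $N_{S,T}(u)$ to coincidences in difference sets. First I would observe that $N_{S,T}(u)\le 1$ for all $u$ is precisely the statement that $(S,T)$ is a Sidon pair, i.e.\ the equation $s_1+t_1=s_2+t_2$ (with $s_i\in S$, $t_i\in T$) forces $(s_1,t_1)=(s_2,t_2)$; this is just unwinding the definition of $N_{S,T}$. So the real content is the equivalence between this Sidon condition and $(S-S)\cap(T-T)=\{0\}$.

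For the contrapositive of the forward direction, suppose $(S-S)\cap(T-T)\ne\{0\}$, so there is a nonzero element $d=s_1-s_2=t_2-t_1$ with $s_1,s_2\in S$ and $t_1,t_2\in T$. Since $d\ne 0$ we have $(s_1,t_1)\ne(s_2,t_2)$, yet $s_1+t_1=s_2+d+t_1=s_2+t_2$, so $u\coloneqq s_1+t_1$ has at least two distinct representations, giving $N_{S,T}(u)\ge 2$. Conversely, if $N_{S,T}(u)\ge 2$ for some $u$, pick two distinct pairs $(s_1,t_1)\ne(s_2,t_2)$ in $S\times T$ with $s_1+t_1=s_2+t_2=u$. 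Then $s_1-s_2=t_2-t_1$ lies in $(S-S)\cap(T-T)$; moreover this common value is nonzero, since if $s_1=s_2$ then also $t_1=t_2$, contradicting distinctness. Hence $(S-S)\cap(T-T)\supsetneq\{0\}$.

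Putting these together: $N_{S,T}(u)\le 1$ for all $u$ fails $\iff$ some $u$ has $N_{S,T}(u)\ge 2$ $\iff$ $(S-S)\cap(T-T)\ne\{0\}$, which is the desired equivalence (noting $0\in(S-S)\cap(T-T)$ always, since $S,T$ are non-empty, so the intersection is never empty and the only question is whether it is exactly $\{0\}$).

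I do not anticipate a genuine obstacle here; the only thing to be careful about is the bookkeeping in the step ``distinct pairs force a \emph{nonzero} difference'' — one must use that a pair in $S\times T$ is determined by either coordinate once the sum is fixed, so equality in one coordinate propagates to the other. Everything else is a direct translation between the counting function and the difference-set language, and the abelian (commutative, additive) structure is used only to write $s_1-s_2 = t_2-t_1$.
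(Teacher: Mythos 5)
Your proof is correct and matches the paper's argument: both directions rest on the same rearrangement $s_1+t_1=s_2+t_2 \iff s_1-s_2=t_2-t_1$, with the same observation that distinctness of pairs forces the common difference to be nonzero (the paper phrases the two directions as direct/contradiction arguments rather than contrapositives, but the computations are identical). No gaps.
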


\begin{proof}
We first assume $N_{S,T}(u)\le 1$ for every $u\in G$. We show $(S-S)\cap(T-T)\subseteq\{0\}$.
Suppose for contradiction that there exists $x\in(S-S)\cap(T-T)$ with $x\ne 0$. Then there are elements
$s,s'\in S$ and $t,t'\in T$ such that
$x=s-s'$ and $x=t-t'$.
Rearranging  gives
$s+t' \;=\; s'+t$.
Set $u\coloneqq s+t'=s'+t$. Then both pairs $(s,t')$ and $(s',t)$ lie in $S\times T$ and represent $u$.
Since $x\neq 0$, at least one of $s\ne s'$ or $t\ne t'$ holds, so these two pairs are distinct.
Hence $N_{S,T}(u)\ge 2$, contradicting the hypothesis. Therefore no such $x$ exists, and$(S-S)\cap(T-T)=\{0\}$.\\
Conversely assume $(S-S)\cap(T-T)=\{0\}$. Let $u\in G$ and suppose $u$ admits two (possibly equal) representations
$u=s+t=s'+t'$, where $s,s'\in S,\ t,t'\in T$.
Subtracting the two equalities gives
$s-s' \;=\; t'-t$.
Thus $s-s'\in (S-S)\cap(T-T)=\{0\}$, so $s=s'$ and consequently $t=t'$.
Therefore every $u$ has \emph{at most} one representation as $s+t$ with $s\in S$, $t\in T$.
This proves $N_{S,T}(u)\le 1$ for all $u$, as required.
\end{proof}

\begin{cor}
Let $G$ be an abelian group.
If $(S,T,U)$ is factorable in $G$, then $$(S-S)\cap(T-T)=\{0\}.$$ 
\end{cor}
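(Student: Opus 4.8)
The plan is to deduce this corollary directly from the results already in hand, since it is essentially an immediate consequence of \Cref{thm:main_1}, \Cref{thm:circulant-conv-1}, and \Cref{lem:sidon-pair}. First I would observe that if $(S,T,U)$ is factorable in the abelian group $G$, then by \Cref{thm:main_1} we have $A(G;U)=A(G;S)A(G;T)$, and in particular every element $u\in U$ has a \emph{unique} representation as $u=s+t$ with $s\in S$, $t\in T$, while every element outside $U$ has \emph{no} such representation. In the notation of \Cref{thm:circulant-conv-1}, this says precisely that $N_{S,T}(u)=\mathbf 1_U(u)$ for all $u\in G$; in either case $N_{S,T}(u)\le 1$ for every $u\in G$.

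Next I would invoke \Cref{lem:sidon-pair}: the condition $N_{S,T}(u)\le 1$ for all $u\in G$ is equivalent to $(S-S)\cap(T-T)=\{0\}$. Combining the two steps gives $(S-S)\cap(T-T)=\{0\}$, which is the claim. (One could equally well phrase the first step via \Cref{thm:circulant-conv-1}, noting that $(A(G;S)A(G;T))_{i,j}=N_{S,T}(g_j-g_i)$ is a $0\!-\!1$ matrix, hence $N_{S,T}$ takes only the values $0$ and $1$.)

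There is no real obstacle here: the corollary is a packaging of earlier results, and the only thing to be careful about is the logical direction — factorability is a strictly stronger hypothesis than $(S,T)$ being a Sidon pair, since it additionally pins down $S+T=U$ exactly, so the implication is one-way, as the statement correctly asserts. I would keep the proof to two or three sentences, simply citing \Cref{thm:main_1} (or \Cref{thm:circulant-conv-1}) to get $N_{S,T}\le 1$ and then \Cref{lem:sidon-pair} to convert this to the stated intersection condition.
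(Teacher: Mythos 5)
Your proposal is correct and matches the paper's own argument: both derive $N_{S,T}(u)\in\{0,1\}$ from the factorization (the paper via \Cref{thm:circulant-conv-1}, you equivalently via \Cref{thm:main_1} or the $0\!-\!1$ entries of the product matrix) and then apply \Cref{lem:sidon-pair} to conclude $(S-S)\cap(T-T)=\{0\}$. No gaps; your remark on the one-way direction of the implication is accurate.
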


\begin{proof}
Suppose $A(G;U)=A(G;S)A(G;T)$.
It follows from \Cref{thm:circulant-conv-1}, we know that the adjacency matrices satisfies the following:
\[
(A(G;S)A(G;T))_{i,j}
= (\mathbf 1_S * \mathbf 1_T)(\,g_j-g_i\,)
= N_{S,T}(g_j-g_i),
\]
so equality with $A(G;U)$ (whose entries are $0/1$) implies $N_{S,T}(u)\in\{0,1\}$ for every $u$.
Applying the equivalence just proved yields $(S-S)\cap(T-T)=\{0\}$.
\end{proof}

\begin{thm}
Let $G$ be a finite abelian group of order $n$, and define
\[
d^*(G)\coloneqq \max\{\,d:\ \exists\ \text{symmetric }S,T\subseteq G\sm \{0\}\ \text{with }|S|=|T|=d
\ \text{and }N_{S,T}(u)\le 1\ \forall u\in G\,\}.
\]
Then $d^*(G)\le \lfloor \sqrt{n}\rfloor$. 
\end{thm}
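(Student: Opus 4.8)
The plan is to count pairs. If $S,T\subseteq G\sm\{0\}$ are symmetric with $|S|=|T|=d$ and $N_{S,T}(u)\le 1$ for every $u\in G$, then the sumset map $\mu\colon S\times T\to G$, $\mu(s,t)=s+t$, is injective: indeed $N_{S,T}(u)\le 1$ says exactly that no two distinct pairs $(s,t),(s',t')\in S\times T$ have the same sum. Since $|S\times T|=|S|\,|T|=d^2$ and the image lies in $G$, injectivity forces $d^2=|S\times T|\le |G|=n$. Hence $d\le\sqrt n$, and since $d$ is an integer, $d\le\lfloor\sqrt n\rfloor$. Taking the maximum over all such pairs gives $d^*(G)\le\lfloor\sqrt n\rfloor$.

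The only point that needs a word of care is why injectivity of $\mu$ follows from $N_{S,T}(u)\le 1$: by definition $N_{S,T}(u)=|\mu^{-1}(u)|$, so $N_{S,T}(u)\le 1$ for all $u$ is literally the statement that every fibre of $\mu$ has size at most one, i.e.\ $\mu$ is injective. (Equivalently, one may invoke \Cref{lem:sidon-pair} to rephrase the hypothesis as $(S-S)\cap(T-T)=\{0\}$, but the direct fibre-counting argument is shorter and self-contained.) One should also note that $d^*(G)$ is well-defined: the set of admissible $d$ is non-empty since $d=1$ works whenever $G\sm\{0\}$ contains a symmetric singleton, i.e.\ an involution — and if $G$ has no involution one can still take $S=\{a,-a\}$, $T=\{b,-b\}$ for suitable $a,b$ to see $d^*(G)\ge 1$; in any case the bound $d^*(G)\le\lfloor\sqrt n\rfloor$ holds vacuously or otherwise.

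I do not anticipate a genuine obstacle here — the statement is a clean pigeonhole-type bound and the proof is essentially one inequality $d^2\le n$. The only mild subtlety worth flagging, if one wanted a matching lower bound or a discussion of tightness, is that the symmetry constraint $S=-S$, $T=-T$ together with $0\notin S\cup T$ may prevent $d^*(G)$ from attaining $\lfloor\sqrt n\rfloor$ for many groups (e.g.\ perfect-difference-set constructions in $\Z_{q^2+q+1}$ need not respect symmetry), but that lies beyond what the present statement asserts.
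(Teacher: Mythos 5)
Your proof is correct and is essentially identical to the paper's argument: both show that $N_{S,T}(u)\le 1$ makes the sum map $\mu\colon S\times T\to G$ injective, giving $d^2=|S\times T|\le n$ and hence $d\le\lfloor\sqrt n\rfloor$. Your added remarks on well-definedness and tightness are harmless extras but not needed for the stated bound.
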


\begin{proof}
For the upper bound, consider $\mu:S\times T\to G$ given by $\mu(s,t)=s+t$. If $\mu(s,t)=\mu(s',t')$, then $s+t=s'+t'$, hence $N_{S,T}(s+t)\ge 2$ unless $(s,t)=(s',t')$. Since $N_{S,T}(u)\le 1$ for all $u$, the map $\mu$ is injective. Thus
\[
d^2=|S||T|=|S\times T|\le |G|=n,
\]
so $d\le \lfloor \sqrt{n}\rfloor$.
\end{proof}

\begin{cor}
Let $G$ be a finite abelian group such that $G\cong H\oplus K$ with $|H|,|K|\ge2$, then
$d^*(G)\ge \min\{|H|,|K|\}-1$. 
In particular, $d^*(G)\in \Theta(\sqrt{n})$ for all families of abelian groups that admit such balanced decompositions.
\end{cor}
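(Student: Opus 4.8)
The plan is to establish the lower bound $d^*(G)\ge m-1$, where $m:=\min\{|H|,|K|\}$, by writing down an explicit symmetric Sidon pair $(S,T)$ with $|S|=|T|=m-1$; the $\Theta(\sqrt n)$ statement then follows immediately by combining this with the upper bound $d^*(G)\le\lfloor\sqrt n\rfloor$ established above. Recall from \Cref{lem:sidon-pair} that the defining condition ``$N_{S,T}(u)\le 1$ for all $u$'' is equivalent to $(S-S)\cap(T-T)=\{0\}$, so $d^*(G)$ is exactly the largest $d$ admitting a \emph{symmetric} Sidon pair in $G\setminus\{0\}$ of size $d$. Assume without loss of generality $|H|\le|K|$, so $m=|H|$, and fix the internal decomposition $G=H\oplus K$.

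The one elementary fact I will use repeatedly is that a symmetric subset of a finite abelian group $A$ is exactly a union of some of the involutions of $A$ together with some of its inverse-pairs $\{a,-a\}$; consequently $A\setminus\{0\}$ contains a symmetric subset of every size $\ell$ with $0\le\ell\le|A|-1$ when $|A|$ is even, and of every \emph{even} size $0\le\ell\le|A|-1$ when $|A|$ is odd. In every case below I take $S:=(H\setminus\{0\})\oplus\{0\}$, a symmetric subset of $G\setminus\{0\}$ of size $m-1$ with $S-S\subseteq H\oplus\{0\}$, and I build $T$ according to the parity of $m$.

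If $m$ is odd, then $m-1$ is even and $m-1\le|K|-1$, so the fact above yields a symmetric $T_0\subseteq K\setminus\{0\}$ with $|T_0|=m-1$; set $T:=\{0\}\oplus T_0$. If $m$ is even, pick an involution $x_0$ of $H$ (Cauchy, or directly) and, since $m-2$ is even with $0\le m-2\le|K|-1$, a symmetric $T_1\subseteq K\setminus\{0\}$ with $|T_1|=m-2$; set $T:=\{(x_0,0)\}\cup(\{0\}\oplus T_1)$. This is a disjoint union, so $|T|=m-1$, and $T$ is symmetric since $(x_0,0)$ is an involution and $T_1=-T_1$. In the odd case $T-T\subseteq\{0\}\oplus K$, which meets $H\oplus\{0\}$ only in $0$. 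In the even case a short computation of $T-T$ shows the same: a nonzero difference in $T-T$ has second coordinate either $\pm k$ with $k\in T_1\subseteq K\setminus\{0\}$ (when one summand is $(x_0,0)$) or $k-k'$ with $k,k'\in T_1$ (which vanishes only when the difference itself is $0$). Since $S-S\subseteq H\oplus\{0\}$, in both cases $(S-S)\cap(T-T)\subseteq(H\oplus\{0\})\cap(T-T)=\{0\}$, so $(S,T)$ is a symmetric Sidon pair of size $m-1$ and $d^*(G)\ge m-1=\min\{|H|,|K|\}-1$.

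For the last assertion, a family admitting balanced decompositions has $\min\{|H|,|K|\}\ge c\sqrt n$ for a fixed $c>0$, so $c\sqrt n-1\le d^*(G)\le\lfloor\sqrt n\rfloor$, whence $d^*(G)=\Theta(\sqrt n)$. The only genuinely delicate point is the even-$m$ construction: the naive choice ``$S\subseteq H$, $T\subseteq K$'' fails precisely when $|H|$ is even and $|K|$ is odd, because then $K\setminus\{0\}$ has no symmetric subset of the odd size $m-1$; borrowing a single involution from $H$ and padding with an even-size symmetric piece of $K$ is what repairs this, and I expect checking the Sidon condition for that $T$ to be the main (though still routine) verification.
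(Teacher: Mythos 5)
Your construction is correct, and it is worth noting that the paper itself states this corollary without any proof, so there is nothing to compare line by line; the intended argument is presumably the ``obvious'' one of placing one Sidon set inside $H\oplus\{0\}$ and the other inside $\{0\}\oplus K$. Your write-up follows that natural route but adds a genuinely necessary repair that the naive version misses: when $m=\min\{|H|,|K|\}=|H|$ is even and $|K|$ is odd (e.g.\ $G\cong\Z_4\oplus\Z_5$), the set $K\setminus\{0\}$ has no symmetric subset of the odd size $m-1$, so one cannot take both factors of equal size $m-1$ with $S\subseteq H\oplus\{0\}$ and $T\subseteq\{0\}\oplus K$; borrowing the involution $(x_0,0)$ from $H$ and padding with a symmetric subset of $K\setminus\{0\}$ of even size $m-2$ fixes this, and your verification that $(T-T)\cap(H\oplus\{0\})=\{0\}$ (the only elements of $T-T$ with vanishing $K$-coordinate are $0$) is exactly the point that needs checking. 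Note also that in this even case $0\in S+T$ via $(x_0,0)+(x_0,0)$, which is harmless because $d^*(G)$ only requires $N_{S,T}(u)\le 1$ (and indeed $N_{S,T}(0)=1$ here), not that $(S,T,S+T)$ be a factorable triple; it is good that you implicitly rely only on the Sidon condition of \Cref{lem:sidon-pair}. The concluding $\Theta(\sqrt n)$ statement, obtained by combining your lower bound with the theorem's upper bound $d^*(G)\le\lfloor\sqrt n\rfloor$, is exactly what the paper intends. In short: correct, same basic idea the paper leaves implicit, but more careful than the statement's surroundings, since you isolate and resolve the parity obstruction.
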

\subsection{Cayley Factorization of Cyclic Groups}

A \defin{circulant matrix} of order $n$ is a matrix $A = (a_{ij})$ such that 
$a_{ij} = c_{(j-i)\bmod n}$ for some vector $(c_0, c_1, \dots, c_{n-1}) \in \mathbb{C}^n$.

\[
\mathrm{Circ}(c_0, c_1, \dots, c_{n-1}) =
\begin{pmatrix}
c_0 & c_1 & c_2 & \cdots & c_{n-1} \\
c_{n-1} & c_0 & c_1 & \cdots & c_{n-2} \\
c_{n-2} & c_{n-1} & c_0 & \cdots & c_{n-3} \\
\vdots & \vdots & \vdots & \ddots & \vdots \\
c_1 & c_2 & c_3 & \cdots & c_0
\end{pmatrix}.
\]

A graph $ G $ is called \defin{circulant} if its adjacency matrix $ A(G) $ is a circulant matrix.  
It is straightforward to see that the Cayley graphs of finite cyclic groups are precisely the circulant graphs.  
Maghsoudi et~al.~\cite{maghsoudi2023matrix} proved that a certain family of circulant graphs admits a factorization.  
More precisely, they showed that 
$\cay(\Z_{2n};\{\pm n, \pm (n-1)\})$
can be factorized into 
$\cay(\Z_{2n};\{\pm n\})$ and $\cay(\Z_{2n};\{\pm 1\})$.

\begin{op}
Characterize all circulant graphs that admit a factorization up to equivalence.  
The graphs need not necessarily be Cayley graphs.
\end{op}

\begin{defn}
Let $U\subseteq \Z_n$.
The \defin{mask polynomial} of $U$ is
\[
F_U(X)\;=\;\sum_{u\in U} X^{u}\ \in\ \Z[X]/(X^{n}-1).
\]
Equivalently, for a primitive $n$th root of unity $\zeta_n$, one may view $F_U(\zeta_n)=\sum_{u\in U}\zeta_n^{u}$.
\end{defn}

\begin{thm}\label{thm:conv}
Let $U,S,T\subseteq \Z_n\sm\{0\}$ be symmetric sets.
The following are equivalent:
\begin{enumerate}
    \item $(S,T,U)$ is factorable in $\Z_n$.
    \item $\mathbf 1_U=\mathbf 1_S * \mathbf 1_T$.
    \item $F_U(x)\equiv F_S(x)\,F_T(x)\pmod{x^n-1}$.
\end{enumerate}
\end{thm}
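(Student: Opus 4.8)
The plan is to prove the three equivalences by relating everything back to the representation-theoretic picture already established, since $\Z_n$ is both abelian and cyclic, so all the machinery of the preceding sections applies. I would first note that (1) $\Leftrightarrow$ (2) is essentially immediate from \Cref{thm:circulant-conv-1} together with \Cref{thm:main_1}: by \Cref{thm:circulant-conv-1} the $(i,j)$ entry of $A(\Z_n;S)A(\Z_n;T)$ equals $(\mathbf 1_S * \mathbf 1_T)(g_j - g_i) = N_{S,T}(g_j-g_i)$, while the $(i,j)$ entry of $A(\Z_n;U)$ is $\mathbf 1_U(g_j-g_i)$. Since as $i,j$ range over all indices the difference $g_j-g_i$ ranges over all of $\Z_n$, the matrix identity $A(\Z_n;U)=A(\Z_n;S)A(\Z_n;T)$ holds if and only if $\mathbf 1_U(u) = (\mathbf 1_S*\mathbf 1_T)(u)$ for every $u\in\Z_n$, i.e.\ $\mathbf 1_U = \mathbf 1_S * \mathbf 1_T$ as functions on $\Z_n$. (One should double-check the trivial coefficient $u=0$: $(\mathbf 1_S*\mathbf 1_T)(0) = |S\cap(-T)| = |S\cap T|$ since $T$ is symmetric, and $\mathbf 1_U(0)=0$ because $0\notin U$, so the convolution condition automatically forces $S\cap T=\varnothing$, consistent with \Cref{lem:product_2}.)

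Next I would establish (2) $\Leftrightarrow$ (3) via the standard dictionary between convolution on $\Z_n$ and multiplication of mask polynomials modulo $x^n-1$. The key observation is the ring isomorphism $\Z[x]/(x^n-1) \cong \Z[\Z_n]$ (the integral group algebra) sending $x \mapsto 1 \in \Z_n$; under this map $F_X(x) = \sum_{u\in X} x^u$ corresponds to $\sum_{u\in X} u$, and polynomial multiplication mod $x^n-1$ corresponds to the group-algebra product, whose coefficients are exactly the convolution: the coefficient of $x^w$ in $F_S(x)F_T(x) \bmod (x^n-1)$ is $\sum_{s+t\equiv w} \mathbf 1_S(s)\mathbf 1_T(t) = (\mathbf 1_S*\mathbf 1_T)(w)$. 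Hence $F_U(x) \equiv F_S(x)F_T(x) \pmod{x^n-1}$ holds precisely when $\mathbf 1_U(w) = (\mathbf 1_S*\mathbf 1_T)(w)$ for all $w$, which is condition (2). This also connects cleanly to condition (3) of \Cref{thm:main_1} specialized to $G=\Z_n$, so an alternative route is simply to cite \Cref{thm:main_1} for the equivalence of (1) and the group-algebra identity, then translate that identity into mask-polynomial language.

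Putting the pieces together: (1) $\Leftrightarrow$ (2) from \Cref{thm:circulant-conv-1} and \Cref{thm:main_1}, and (2) $\Leftrightarrow$ (3) from the group-algebra/mask-polynomial dictionary, yields the full cycle. I do not anticipate a serious obstacle here — the statement is really a repackaging of earlier results into the concrete language of circulants and mask polynomials — but the one point deserving care is being precise about the quotient ring $\Z[x]/(x^n-1)$ versus working with evaluations at roots of unity: the polynomial congruence mod $x^n-1$ (equivalently, matching all coefficients) is strictly what is needed, and is equivalent to $F_U(\zeta_n^k) = F_S(\zeta_n^k)F_T(\zeta_n^k)$ for all $k=0,\dots,n-1$ (the discrete Fourier transform / eigenvalue viewpoint, which also recovers \Cref{thm:normal} in this abelian case since the irreducible characters of $\Z_n$ are exactly $j\mapsto \zeta_n^{kj}$). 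I would mention this equivalent spectral formulation in a remark, since it is what makes the Chinese Remainder Theorem reduction to prime-power components (promised in the introduction) work, but it is not strictly required for the three-way equivalence as stated.
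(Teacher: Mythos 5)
Your proposal is correct and follows essentially the same route as the paper: entrywise identification of $A(\Z_n;S)A(\Z_n;T)$ with the convolution $\mathbf 1_S*\mathbf 1_T$ (the paper redoes this computation for $\Z_n$ where you cite \Cref{thm:circulant-conv-1}, but it is the same calculation), followed by matching coefficients of $F_S(x)F_T(x)$ modulo $x^n-1$ with the convolution values. Your additional remarks on the $u=0$ coefficient and the root-of-unity/DFT viewpoint are correct but not needed for the equivalence.
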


\begin{proof}
Let $S,T\subseteq \Z_n\sm\{0\}$ be symmetric sets. Consider the adjacency matrices
$A_S \coloneqq A(\Z_n;S)$, and $A_T \coloneqq A(\Z_n;T)$.
The $(i,j)$-entry of the product is
\begin{align*}
(A_S A_T)_{i,j}
  &= \sum_{k\in \Z_n} (A_S)_{i,k}\,(A_T)_{k,j}
   \;=\; \sum_{k\in \Z_n} \mathbf{1}_S(k-i)\,\mathbf{1}_T(j-k).
\end{align*}
Let $m=k-i$. Then $k=i+m$ and $j-k=j-i-m$, so
\begin{align*}
(A_S A_T)_{i,j}
  &= \sum_{m\in \Z_n} \mathbf{1}_S(m)\,\mathbf{1}_T\bigl(j-i-m\bigr)
   \;=\; (\mathbf{1}_S * \mathbf{1}_T)(\,j-i\,),
\end{align*}
where $*$ denotes convolution on $\Z_n$.
Hence $A_SA_T$ is a circulant matrix whose first row is $\mathbf{1}_S * \mathbf{1}_T$.
We now suppose
$A(\Z_n;U) \;=\; A(\Z_n;S)\,A(\Z_n;T)$.
Since both sides are circulant, this holds if and only if their first rows agree:
$\mathbf{1}_U \;=\; \mathbf{1}_S * \mathbf{1}_T$.
Moreover, because $U,S,T\subseteq \Z_n\sm\{0\}$, we have
\[
\mathbf{1}_U(0)=\mathbf{1}_S(0)=\mathbf{1}_T(0)=0,
\]
as required for loopless Cayley graphs.

\medskip
Next we define the mask polynomials
\[
F_U(X)=\sum_{u\in U} X^{u},\qquad
F_S(X)=\sum_{s\in S} X^{s},\qquad
F_T(X)=\sum_{t\in T} X^{t}\ \ \in \ \Z[X]/(X^n-1).
\]
Their product modulo $X^n-1$ is
\begin{align*}
F_S(X)F_T(X)
  &= \sum_{s\in S}\sum_{t\in T} X^{s+t}
   \equiv \sum_{k\in \Z_n}
        \Bigl(\sum_{\substack{s\in S,\,t\in T\\ s+t\equiv k \,(\mathrm{mod}\, n)}} 1\Bigr) X^{k}
   \;=\; \sum_{k\in \Z_n} (\mathbf{1}_S * \mathbf{1}_T)(k)\,X^{k} \pmod{X^n-1}.
\end{align*}
Therefore, we have $F_S(X)F_T(X)\equiv F_U(X)\pmod{X^n-1}$ if and only if $
\mathbf{1}_S * \mathbf{1}_T \;=\; \mathbf{1}_U$.
Since $\mathbf{1}_U$ is $0\!-\!1$–valued, the coefficients of $F_S(X)F_T(X)\bmod (X^n-1)$ are in $\{0,1\}$.
Conversely, if the product has $0\!-\!1$ coefficients, it defines a set $U$ with $\mathbf{1}_U=\mathbf{1}_S * \mathbf{1}_T$.
\end{proof}

\begin{table}[H]
  \centering
  \caption{Ordinary circulant Cayley factorizations on $\Z_n$.
  In all rows we assume $0\notin S,T,U$, $S=-S$, $T=-T$ (so $U=-U$), and the
  \emph{uniqueness} condition $(S-S)\cap (T-T)=\{0\}$ (equivalently $N_{S,T}(u)\in\{0,1\}$ for all $u$), ensuring $A(\Z_n;U)=A(\Z_n;S)A(\Z_n;T)$ is a $0$–$1$ circulant (simple, undirected).}
  \label{tab:STU-Zn}
  \rowcolors{3}{gray!25}{white}
  \setlength{\tabcolsep}{8pt}
  \renewcommand{\arraystretch}{1.12}
  \begin{tabular}{@{}p{0.34\textwidth}p{0.18\textwidth}p{0.18\textwidth}p{0.22\textwidth}@{}}
    \toprule
    \textbf{Conditions} & \textbf{$S$} & \textbf{$T$} & \textbf{$U$} \\
    \midrule \midrule

    $g\in \Z_n^\times$ &$gS$ & $gT$ & $gU$ \\[2pt]

     $n$ even and $U\subseteq \Z_n\sm \{0,\tfrac n2\}$ &
    $\{\tfrac n2\}$ & $U-\tfrac n2$ & $U$ \\[2pt]

    $\operatorname{ord}(d)\ge 5$ &
    $\{\pm d\}$ & $\{\pm 2d\}$ & $\{\pm d,\ \pm 3d\}$ \\[2pt]

    $I,J\subseteq\{1,\dots,\lfloor\frac{n-1}{2}\rfloor\}$ so that the multiset
    $\{\pm(i\pm j)\mid \ i\in I,\ j\in J\}$ has no repetitions modulo $n$, avoids $0$, and (if $n$ is even) also avoids $\tfrac n2$ &
    $\{\pm i\mid \ i\in I\}$ & $\{\pm j\mid\ j\in J\}$ & $\{\pm(i\pm j)\mid \ i\in I,\ j\in J\}$ \\[2pt]
    \bottomrule
  \end{tabular}
  \vspace{0.6ex}
\end{table}
\begin{lem}\label{lem:antipode-augmentation}
Let $n$ be even and $a\coloneqq n/2$.
Assume that $(S_0, T , U_0 )$ is factorable in $\Z_n$,
where
$S_0=\{\pm i\mid \ i\in I\}$, $T=\{\pm j\mid\ j\in J\}$,
$U_0=\{\pm(i\pm j)\mid \ i\in I,\ j\in J\}$.
If $a+T$ is disjoint from $U_0$, i.e.
$\{\,\pm(a+j)\mid \ j\in J\,\}\ \cap\ U_0\;=\;\varnothing$,
then $(S, T , U )$ is also factorable in $\Z_n$, where
\[
S\;=\;S_0\cup\{a\},\qquad T\;=\;\{\pm j\mid\ j\in J\},\qquad
U\;=\;U_0\ \cup\ \{\pm(a+j)\mid\ j\in J\}.
\]
\end{lem}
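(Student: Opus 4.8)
The plan is to verify the factorability of $(S,T,U)$ directly from \Cref{thm:conv}(1)$\Leftrightarrow$(2), i.e.\ to show $\mathbf 1_U=\mathbf 1_S*\mathbf 1_T$, or equivalently via \Cref{thm:main_1} to check that every $u\in U$ has a unique representation $u=s+t$ with $s\in S$, $t\in T$, and no element outside $U$ is so represented. Since $a=n/2$ is an involution in $\Z_n$ and $a\notin T$ (as $T=\{\pm j: j\in J\}$ with $J\subseteq\{1,\dots,\lfloor(n-1)/2\rfloor\}$, so $0<j<a$), one checks first that the new sets are still symmetric and avoid $0$: $S=S_0\cup\{a\}$ is symmetric since $-a\equiv a$, and $U=U_0\cup\{\pm(a+j):j\in J\}$ is symmetric; that $0\notin U$ follows because $a+j\equiv 0$ would force $j\equiv a$, impossible for $j\in J$, and $U_0$ avoids $0$ by hypothesis on $I,J$. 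So the sets are legitimate connection sets.

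Next I would split the sum set $S+T=(S_0+T)\cup(\{a\}+T)$. The first piece is $S_0+T=U_0$ with the unique-representation property already guaranteed by the hypothesis that $(S_0,T,U_0)$ is factorable; equivalently $N_{S_0,T}(u)\le 1$ for all $u$ and $N_{S_0,T}(u)=1$ exactly for $u\in U_0$. The second piece is $\{a\}+T=\{a\pm j:j\in J\}=\{\pm(a+j):j\in J\}$ (using $a-j\equiv-(a+j)\bmod n$), which is exactly the augmentation set $U\setminus U_0$; each element $\pm(a+j)$ of this set arises from the single pair $(a,\pm j)\in\{a\}\times T$. The disjointness hypothesis $\{\pm(a+j):j\in J\}\cap U_0=\varnothing$ is what guarantees that these two families of representations do not collide: no element of $U_0$ is hit by a pair with first coordinate $a$, and no element of $\{\pm(a+j):j\in J\}$ is hit by a pair from $S_0\times T$. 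Hence $N_{S,T}(u)=N_{S_0,T}(u)+|\{t\in T: a+t=u\}|$, and this equals $1$ precisely on $U_0\sqcup\{\pm(a+j):j\in J\}=U$ and $0$ elsewhere, which is the required identity $\mathbf 1_U=\mathbf 1_S*\mathbf 1_T$.

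The one point that needs a little care — and which I expect to be the main (mild) obstacle — is the internal uniqueness within the augmentation block: I must confirm that the map $j\mapsto a+j$ (equivalently $\pm j\mapsto \pm(a+j)$) is injective on $T$ modulo $n$, so that no element $\pm(a+j)$ is counted twice by two different choices of $t\in T$. This follows because $a+j=a+j'$ forces $j=j'$, while $a+j=-(a+j')=a-j'$ (using $2a\equiv 0$) forces $j\equiv -j'$, i.e.\ $j+j'\equiv 0\bmod n$; since $1\le j,j'\le\lfloor(n-1)/2\rfloor<a$ we get $0<j+j'<n$, a contradiction. One should also note $a\notin S_0$ is not needed, but if $a\in S_0$ already then $S=S_0$ and the statement is trivial; the interesting case is $a\notin S_0$, and then $|S|=|S_0|+1$. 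Assembling these observations gives $N_{S,T}(u)\in\{0,1\}$ with support exactly $U$, so by \Cref{thm:main_1} (or \Cref{thm:conv}) $(S,T,U)$ is factorable in $\Z_n$, completing the proof.
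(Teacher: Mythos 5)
Your proof is correct and follows essentially the same route as the paper's: decompose $\mathbf 1_S*\mathbf 1_T=\mathbf 1_{S_0}*\mathbf 1_T+\mathbf 1_{\{a\}}*\mathbf 1_T=\mathbf 1_{U_0}+\mathbf 1_{a+T}$ and use the disjointness hypothesis to conclude the sum is $0$--$1$ with support $U$. Your extra worry about uniqueness inside the augmentation block is automatic, since $t\mapsto a+t$ is a translation and hence a bijection, which is exactly why $\mathbf 1_{\{a\}}*\mathbf 1_T=\mathbf 1_{a+T}$; otherwise your additional checks (symmetry, $0\notin U$, $a\notin T$) only make explicit what the paper notes briefly.
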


\begin{proof}
By~\Cref{thm:circulant-conv-1}, we have $\mathbf 1_{U_0}=\mathbf 1_{S_0}*\mathbf 1_T$.
Since $S=S_0\cup\{a\}$, we have
\[
\mathbf 1_S*\mathbf 1_T
=\bigl(\mathbf 1_{S_0}+\mathbf 1_{\{a\}}\bigr)*\mathbf 1_T
= \mathbf 1_{S_0}*\mathbf 1_T \;+\; \mathbf 1_{\{a\}}*\mathbf 1_T
= \mathbf 1_{U_0} \;+\; \mathbf 1_{a+T}.
\]
Because $a=-a$ and $T=-T$, the set $a+T$ equals $\{\pm(a+j): j\in J\}$ and is inverse‑closed.
Also $0\notin a+T$ since $a\notin T$. By the disjointness hypothesis $(a+T)\cap U_0=\varnothing$,
the sum $\mathbf 1_{U_0}+\mathbf 1_{a+T}$ is $0$–$1$, i.e.,
\[
\mathbf 1_S*\mathbf 1_T=\mathbf 1_{U_0\cup(a+T)}=\mathbf 1_U,
\]
which is equivalent to $A(\Z_n;U)=A(\Z_n;S)A(\Z_n;T)$. The sets $S$ and $T$ are inverse‑closed and exclude $0$,
so the factors and the product remain simple and undirected.
\end{proof}
\begin{nota}
Let $n=\prod_{i=1}^{k} p_i^{e_i}$. By the Chinese Remainder Theorem, there is a group isomorphism
\[
\begin{array}{rcl}
\varphi \colon \Z_n &\longrightarrow& \displaystyle\prod_{i=1}^{k} \Z_{p_i^{e_i}} \\[4pt]
x &\longmapsto& \bigl(x \bmod p_1^{e_1},\, \dots,\, x \bmod p_k^{e_k}\bigr)
\end{array}
\]

\end{nota}

\begin{lem}{\rm \cite{MR2215618}}\label{lem:prod_group_ring}
Let $R$ be a commutative ring and let $G,H$ be groups. There is a natural
isomorphism of $R$-algebras
\[
R[G\times H]\ \cong\ R[G]\otimes_{R} R[H].
\]
\end{lem}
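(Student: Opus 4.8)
The plan is to prove the standard $R$-algebra isomorphism $R[G\times H]\cong R[G]\otimes_R R[H]$ by constructing an explicit bilinear map and invoking the universal property of the tensor product, then checking that the induced map is an algebra homomorphism with an explicit two-sided inverse. First I would define a map $\beta\colon R[G]\times R[H]\to R[G\times H]$ on basis elements by $\beta(g,h)=(g,h)$ and extend $R$-bilinearly; since $\beta$ is $R$-bilinear, the universal property of $\otimes_R$ yields a unique $R$-linear map $\Phi\colon R[G]\otimes_R R[H]\to R[G\times H]$ with $\Phi(g\otimes h)=(g,h)$ for all $g\in G$, $h\in H$.

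Next I would verify that $\Phi$ is multiplicative. On elementary tensors this reads $\Phi\bigl((g\otimes h)(g'\otimes h')\bigr)=\Phi(gg'\otimes hh')=(gg',hh')=(g,h)(g',h')=\Phi(g\otimes h)\Phi(g'\otimes h')$, using the definition of the product in $R[G\times H]$ and in the tensor product of algebras; bilinearity of multiplication on both sides extends this to all elements. Since $\Phi(e_G\otimes e_H)=(e_G,e_H)$ is the identity of $R[G\times H]$, $\Phi$ is a unital $R$-algebra homomorphism. For surjectivity, note the image contains every basis element $(g,h)$, and for injectivity one produces an explicit inverse: define $\Psi\colon R[G\times H]\to R[G]\otimes_R R[H]$ on the basis by $\Psi\bigl((g,h)\bigr)=g\otimes h$, extended $R$-linearly. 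A direct check on basis elements shows $\Psi\circ\Phi$ and $\Phi\circ\Psi$ are the respective identities, since $\{g\otimes h\}$ spans $R[G]\otimes_R R[H]$ (as $\{g\}$ and $\{h\}$ are $R$-bases of the factors) and $\{(g,h)\}$ is the $R$-basis of $R[G\times H]$.

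The only genuinely delicate point is confirming that $\{g\otimes h : g\in G,\ h\in H\}$ is an $R$-basis of $R[G]\otimes_R R[H]$, which is needed to see that $\Psi$ is well-defined and that the two composites are identities; this follows from the general fact that if $M$ is free on a basis $B$ and $N$ is free on a basis $C$ over a commutative ring $R$, then $M\otimes_R N$ is free on $\{b\otimes c : b\in B,\ c\in C\}$. I expect this to be the main obstacle only in the bookkeeping sense — it is standard, but it is what makes the inverse map $\Psi$ legitimate rather than merely formally defined. Everything else is a routine verification that the group operations match up coordinatewise, which is exactly the content of the definition $R[G\times H]$ uses for its multiplication.
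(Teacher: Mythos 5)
Your argument is correct; note that the paper does not prove this lemma at all but cites it to the literature, so your write-up simply supplies the standard textbook proof via the universal property of the tensor product and the explicit inverse on basis elements. One small simplification: the freeness of $\{g\otimes h\}$ is not actually needed for $\Psi$ to be well-defined, since $\Psi$ is defined on the $R$-basis $G\times H$ of the free module $R[G\times H]$; and both composites can be checked to be the identity on spanning sets ($\{g\otimes h\}$ spans $R[G]\otimes_R R[H]$ because elementary tensors do and each factor is spanned by group elements), so the only ``delicate point'' you flag can be bypassed entirely.
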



\begin{thm}\label{thm:CRT}
Let $n=\prod_{i=1}^k p_i^{e_i}$ and let
$\varphi\colon \Z_n \;\xrightarrow{\ \cong\ }\; \prod_{i=1}^k \Z_{p_i^{e_i}}$
be the CRT isomorphism.
Assume that $U,S,T\subseteq \Z_n$ are of \emph{product form}:
there exist subsets $U_i,S_i,T_i\subseteq \Z_{p_i^{e_i}}$ such that
\[
U=\varphi^{-1}\Bigl(\prod_{i=1}^k U_i\Bigr),\qquad
S=\varphi^{-1}\Bigl(\prod_{i=1}^k S_i\Bigr),\qquad
T=\varphi^{-1}\Bigl(\prod_{i=1}^k T_i\Bigr).
\]
Then $(S,T,U)$ is factorable in $\Z_n$ if and only if, for every $i$,
$(S_i,T_i,U_i)$ is factorable in $\Z_{p_i^{e_i}}$.
\end{thm}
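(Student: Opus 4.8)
The plan is to transport the problem through $\varphi$ to the product group $G:=\prod_{i=1}^k G_i$ (with $G_i:=\Z_{p_i^{e_i}}$) and then into the integral group ring, exploiting that the Chinese Remainder isomorphism induces a ring isomorphism $\Z[\Z_n]\cong\Z[G_1]\otimes_{\Z}\cdots\otimes_{\Z}\Z[G_k]$ under which the element $\sigma_X:=\sum_{x\in X}x$ attached to a product set $X=\prod_i X_i$ becomes the tensor $\sigma_{X_1}\otimes\cdots\otimes\sigma_{X_k}$. Concretely, I would first use \Cref{lem:relabel} to identify $\Z_n$ with $G$ along $\varphi$; under this identification $S,T,U$ become the product sets $\prod_i S_i,\ \prod_i T_i,\ \prod_i U_i$, factorability is unchanged, and symmetry of each $S_i$ (resp.\ $T_i$, $U_i$) follows by projecting the symmetry of $S$ (resp.\ $T$, $U$). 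One may assume all the $S_i,T_i,U_i$ are nonempty, the remaining cases being trivial. Iterating \Cref{lem:prod_group_ring} then gives the ring isomorphism $\Z[G]\cong\bigotimes_{i}\Z[G_i]$, $(g_1,\dots,g_k)\mapsto g_1\otimes\cdots\otimes g_k$; by bilinearity of $\otimes$ one has $\sigma_X=\bigotimes_i\sigma_{X_i}$ for product sets, and since $\bigl(\bigotimes_i a_i\bigr)\bigl(\bigotimes_i b_i\bigr)=\bigotimes_i(a_ib_i)$, this yields
\[
\sigma_S\,\sigma_T=\bigotimes_{i=1}^k(\sigma_{S_i}\sigma_{T_i}),\qquad \sigma_U=\bigotimes_{i=1}^k\sigma_{U_i}.
\]

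By \Cref{thm:main_1}, $(S,T,U)$ is factorable in $G$ precisely when $\sigma_S\sigma_T=\sigma_U$, and likewise $(S_i,T_i,U_i)$ is factorable in $G_i$ precisely when $\sigma_{S_i}\sigma_{T_i}=\sigma_{U_i}$; so the theorem reduces to the claim that $\bigotimes_i(\sigma_{S_i}\sigma_{T_i})=\bigotimes_i\sigma_{U_i}$ if and only if $\sigma_{S_i}\sigma_{T_i}=\sigma_{U_i}$ for every $i$. The ``if'' direction is a one-line computation, just tensoring the componentwise equalities. For ``only if'' I would assume $\sigma_S\sigma_T=\sigma_U$ and argue as follows: applying the augmentation $\varepsilon\colon\Z[G]\to\Z$ gives $|S|\,|T|=|U|$, i.e.\ $\prod_i|S_i||T_i|=\prod_i|U_i|$; next, \Cref{thm:main_1} gives $U=S+T$, and since $S+T=\prod_i(S_i+T_i)$ for product sets, projecting $\prod_i U_i=\prod_i(S_i+T_i)$ onto each coordinate (all factors nonempty) yields $U_i=S_i+T_i$ for every $i$; hence $|U_i|=|S_i+T_i|\le|S_i||T_i|$ for every $i$, and combined with $\prod_i|U_i|=\prod_i|S_i||T_i|$ and positivity of all terms this forces $|U_i|=|S_i||T_i|$ for each $i$. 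Finally, for each $i$ the surjection $S_i\times T_i\to S_i+T_i=U_i$, $(s,t)\mapsto s+t$, is then a map between finite sets of equal cardinality, hence a bijection, so every $u\in U_i$ has a unique representation $u=s+t$; \Cref{thm:main_1} then gives $\sigma_{S_i}\sigma_{T_i}=\sigma_{U_i}$, completing the argument.

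I expect the main obstacle to be exactly this ``only if'' direction: a bare tensor equation $\bigotimes_i a_i=\bigotimes_i b_i$ does \emph{not} force $a_i=b_i$ (already $2\otimes 3=1\otimes 6$ in $\Z\otimes_{\Z}\Z$), so there is no naive ``cancel a coordinate'' move, and the extra combinatorial content of \Cref{thm:main_1} is what breaks the symmetry. The key leverage is that \Cref{thm:main_1} identifies the support of $\sigma_S\sigma_T$ with $U$; the product structure then pins down $U_i=S_i+T_i$ together with the inequality $|U_i|\le|S_i||T_i|$, after which the matching of total cardinalities forces equality in every coordinate and uniqueness of representations is automatic. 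One minor bookkeeping point worth flagging: for a nonempty product set the decomposition forces $U_i=\pi_i(U)$, but this projection need not avoid $0$, so if some $U_i$ happens to contain $0$ one should read ``$(S_i,T_i,U_i)$ is factorable'' as the group-ring identity $\sigma_{S_i}\sigma_{T_i}=\sigma_{U_i}$ of \Cref{thm:main_1}; the argument above is unaffected.
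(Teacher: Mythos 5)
Your proposal is correct, and its skeleton matches the paper's: transport along $\varphi$ (as in \Cref{lem:relabel}) and then exploit the product structure, which in the paper is phrased as the Kronecker decomposition $A(G;\prod_i X_i)=\bigotimes_i A(G_i;X_i)$ together with the mixed-product rule, and in your write-up as the equivalent group-ring statement $\Z[G]\cong\bigotimes_i\Z[G_i]$, $\sigma_X=\bigotimes_i\sigma_{X_i}$ via \Cref{lem:prod_group_ring} and condition (3) of \Cref{thm:main_1}. The genuine difference is in the ``only if'' direction. The paper simply asserts that $\bigotimes_i\bigl(A(G_i;S_i)A(G_i;T_i)\bigr)=\bigotimes_i A(G_i;U_i)$ holds iff it holds factorwise; as you rightly point out, tensor (or Kronecker) equality does not in general allow one to cancel coordinates, so the backward implication is exactly the step the paper leaves implicit. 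Your argument fills it in explicitly and combinatorially: from \Cref{thm:main_1} you get $U=S+T$ with unique representations, the identity $S+T=\prod_i(S_i+T_i)$ plus nonemptiness pins down $U_i=S_i+T_i$, and the count $\prod_i|U_i|=\prod_i|S_i||T_i|$ together with $|S_i+T_i|\le|S_i||T_i|$ forces equality and hence uniqueness in every coordinate. This is a bit longer than the paper's proof but is actually complete where the paper is not, and your flagged caveats (nonemptiness of the components, and the possibility that a component $S_i$ or $U_i$ contains $0$ even though the product set avoids it, so that componentwise ``factorable'' should be read as the group-ring identity of \Cref{thm:main_1}) are real edge cases that the paper's statement and proof silently ignore as well.
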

\begin{proof}
Write $G\coloneqq \prod_{i=1}^k \Z_{p_i^{e_i}}$ and view $\varphi$ as a relabeling
of vertices. Hence the identity in $\Z_n$ holds iff the corresponding
identity holds in $G$ for the sets $\prod U_i$, $\prod S_i$, $\prod T_i$.

Now use the general fact for direct products: if $G=G_1\times\cdots\times G_k$
and $X=\prod_{i=1}^k X_i$, then
\[
A(G;X)=A(G_1;X_1)\otimes \cdots \otimes A(G_k;X_k),
\]
because adjacency between $(g_1,\dots,g_k)$ and $(h_1,\dots,h_k)$ is equivalent to
$g_i^{-1}h_i\in X_i$ for every $i$.
Therefore
\[
A(G;\textstyle\prod U_i)
=
\bigotimes_{i=1}^k A(G_i;U_i),\quad
A(G;\textstyle\prod S_i)
=
\bigotimes_{i=1}^k A(G_i;S_i),\quad
A(G;\textstyle\prod T_i)
=
\bigotimes_{i=1}^k A(G_i;T_i).
\]
Using $(A_1\otimes\cdots\otimes A_k)(B_1\otimes\cdots\otimes B_k)
=(A_1B_1)\otimes\cdots\otimes(A_kB_k)$, we get that
$A(G;\prod U_i)=A(G;\prod S_i)A(G;\prod T_i)$ holds iff
$A(G_i;U_i)=A(G_i;S_i)A(G_i;T_i)$ holds for all $i$.
\end{proof}


\section{Cayley Factorization of Dihedral Groups}

Let $D_{2n}$ be the dihedral group of order $2n$ with presentation
\[
D_{2n}=\langle r,s \mid r^n=e,\ s^2=e,\ srs=r^{-1}\rangle.
\]
Its elements split into the rotations 
$R=\{e,r,\dots,r^{\,n-1}\}$
and the reflections (which are involutions as well)
$M=\{s,sr,\dots,sr^{\,n-1}\}$.
In this section, we explore the factorization of $D_{2n}$.
We first see an example of factorization of a Cayley graph of $D_{2n}$.

\begin{figure}[H]
    \centering
    \includegraphics[scale=0.8]{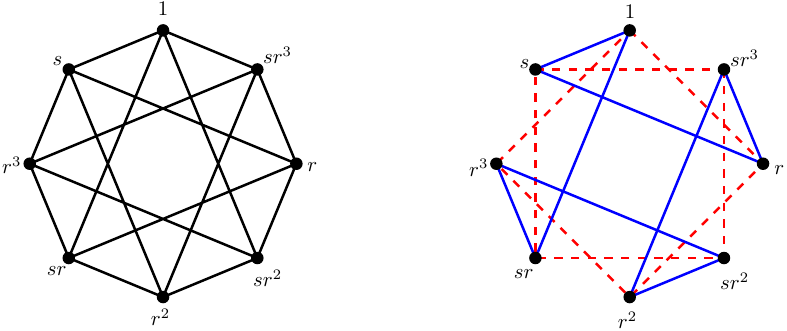}
    \caption{The Cayley graph $\cay(D_8,\{sr^3,s,sr^2,sr\})$ has a factorization to $\cay(D_{8};\{r,r^3\})$ and $\cay(D_8;\{s,sr\})$.}
    \label{fig:placeholder}
\end{figure}

\begin{thm}
Let $U$ be a symmetric subset of $D_{2n}\sm\{e\}$.
If the Cayley graph $\cay(D_{2n};U)$ admits a factorization
$(S,T,U)$ is factorable in $D_{2n}$
with $S,T\subseteq D_{2n}\sm\{e\}$ symmetric, then we have 
\begin{equation}\label{eq:UR-UM}
U\cap R=(S_R T_R)\ \bigsqcup\ (S_M T_M),\qquad
U\cap M=(S_R T_M)\ \bigsqcup\ (S_M T_R),
\end{equation}
\end{thm}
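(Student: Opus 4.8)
The plan is to work in the integral group algebra $\Z[D_{2n}]$ and exploit the fact that the rotations $R$ form an index-$2$ subgroup with nontrivial coset $M$. Write $\sigma_X\coloneqq\sum_{x\in X}x$ for $X\subseteq D_{2n}$. By \Cref{thm:main_1}, the hypothesis that $(S,T,U)$ is factorable is equivalent to the identity $\sigma_S\,\sigma_T=\sigma_U$ in $\Z[D_{2n}]$. I would then decompose each set along the partition $D_{2n}=R\sqcup M$, setting $S_R\coloneqq S\cap R$, $S_M\coloneqq S\cap M$, and likewise $T_R,T_M$, so that $\sigma_S=\sigma_{S_R}+\sigma_{S_M}$, $\sigma_T=\sigma_{T_R}+\sigma_{T_M}$, and $\sigma_U=\sigma_{U\cap R}+\sigma_{U\cap M}$.

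The key structural input is the $\Z_2$-grading on $\Z[D_{2n}]$ induced by the quotient homomorphism $D_{2n}\to D_{2n}/R\cong\Z_2$: the $\Z$-span of $R$ is the even part, the $\Z$-span of $M$ the odd part, and multiplication respects the grading since $RR\subseteq R$, $RM\subseteq M$, $MR\subseteq M$, $MM\subseteq R$. Expanding
\[
\sigma_S\sigma_T=\bigl(\sigma_{S_R}\sigma_{T_R}+\sigma_{S_M}\sigma_{T_M}\bigr)+\bigl(\sigma_{S_R}\sigma_{T_M}+\sigma_{S_M}\sigma_{T_R}\bigr),
\]
the first bracket is supported on $R$ (since $S_RT_R\subseteq R$ and $S_MT_M\subseteq R$) and the second is supported on $M$. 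Matching the graded components of $\sigma_S\sigma_T=\sigma_U$ then forces
\[
\sigma_{U\cap R}=\sigma_{S_R}\sigma_{T_R}+\sigma_{S_M}\sigma_{T_M},\qquad
\sigma_{U\cap M}=\sigma_{S_R}\sigma_{T_M}+\sigma_{S_M}\sigma_{T_R}.
\]

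Finally I would promote these identities in $\Z[D_{2n}]$ to the asserted disjoint-union statements about subsets. On each right-hand side the two summands have nonnegative integer coefficients, while each left-hand side has all coefficients in $\{0,1\}$, so for every $g\in D_{2n}$ the $g$-coefficients of the two summands add up to at most $1$. Hence each of $\sigma_{S_R}\sigma_{T_R}$, $\sigma_{S_M}\sigma_{T_M}$ (respectively $\sigma_{S_R}\sigma_{T_M}$, $\sigma_{S_M}\sigma_{T_R}$) is itself multiplicity-free, and the two supports are disjoint, which yields $U\cap R=(S_RT_R)\sqcup(S_MT_M)$ and $U\cap M=(S_RT_M)\sqcup(S_MT_R)$; the case of an empty block among $S_R,S_M,T_R,T_M$ is covered since an empty set product is empty. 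Consistency with $e\notin U$ is automatic: $e\in S_MT_M$ would force some reflection to lie in $S_M\cap T_M\subseteq S\cap T$, which is empty by \Cref{lem:product_2}. The argument is mostly bookkeeping; the only slightly delicate point is this last step, where the nonnegativity of all structure constants is exactly what converts ``a sum of group-algebra elements with coefficients in $\{0,1\}$'' into ``a genuine disjoint union of sets'' with no cancellation possible.
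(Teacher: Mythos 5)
Your proof is correct, and it arrives at exactly the two identities the paper proves, but through a slightly different mechanism. The paper works with the combinatorial characterization \Cref{thm:main_1}(2): for $u\in U\cap R$ it invokes the unique representation $u=st$ together with the parity rules $RR,MM\subseteq R$, $RM,MR\subseteq M$ to force $(s,t)\in S_R\times T_R$ or $S_M\times T_M$, and then excludes an overlap of $S_RT_R$ and $S_MT_M$ by uniqueness (a common element would make a rotation equal a reflection). You instead use the group-algebra characterization \Cref{thm:main_1}(3) together with the $\Z_2$-grading of $\Z[D_{2n}]$ induced by $D_{2n}/\langle r\rangle$, and match graded components of $\sigma_S\sigma_T=\sigma_U$. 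The underlying input is the same (dihedral parity plus unique representation), but your packaging delivers a bit more in one stroke: comparing coefficients simultaneously yields both displayed equations (for $U\cap R$ and for $U\cap M$), both inclusions (the paper's written argument only spells out $U\cap R\subseteq S_RT_R\cup S_MT_M$ and the disjointness, leaving the reverse inclusion and the reflection equation implicit), and the disjointness itself, since a $0$--$1$ left-hand side cannot absorb two nonnegative contributions at the same group element, with no cancellation possible. Your closing remark about the identity element is fine but not strictly needed: the coefficient of $e$ in $\sigma_{U\cap R}$ is already $0$, so $e\notin S_MT_M$ falls out of the same comparison, and your alternative justification via $S\cap T=\varnothing$ from \Cref{lem:product_2} is also valid.
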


\begin{proof}
Assume $A(D_{2n};U)=A(D_{2n};S)A(D_{2n};T)$ with $S,T\subseteq D_{2n}\sm\{e\}$ symmetric.
Let $X$ be a subset of $D_{2n}$.
Then $X_R(\textit{resp.} X_M)$ denotes the set of rotations(\textit{resp.} reflections) of the set $X$.
Decompose $S=S_R\cup S_M$ and $T=T_R\cup T_M$ with $S_R,T_R\subseteq R$ and $S_M,T_M\subseteq M$.
Using the dihedral product rules,
\[
R\cdot R,\ M\cdot M\subseteq R,\qquad R\cdot M,\ M\cdot R\subseteq M,
\]
If $u\in U\cap R$, then $u=st$ for a unique $s\in S, t\in T$ by the factorization/uniqueness hypothesis. 
Since $u$ is a rotation, the above product rules force $(s,t)$ to be either (\text{rotation},\text{rotation}) or (\text{reflection},\text{reflection}). 
Hence $u\in S_R T_R$ or $u\in S_M T_M$.
We now show that $S_R T_R$ and $S_M T_M$ are disjoint.
Assume to the contrary that $s_1t_1=s_2t_2\in S_R T_R \cap S_M T_M$, where $s_1\in S_R, s_2\in S_M$ and $t_1\in T_R, t_2\in T_M$.
But it follows from \Cref{thm:main_1} that $s_1=s_2$ and $t_1=t_2$.
This means that $S_R\cap S_M\neq \varnothing$ and $T_R\cap T_M\neq \varnothing$ which violates the assumption that $R$ and $M$ are disjoint sets.
\end{proof}
It turns out that it is difficult to classify all subsets $U\subseteq D_{2n}\sm\{e\}$ for which the Cayley graph $\cay(D_{2n};U)$ admits a factorization. 
Accordingly, we restrict our attention to the case where $\cay(D_{2n};U)$ is connected.
Recall that $\cay(G;U)$ is connected if and only if $U$ generates $G$.
We make a list of generating sets $U$ of $D_{2n}$ such that the Cayley graph $\cay(D_{2n};U)$ admits a factorization, see \Cref{tab:condition-STU-dihedral}.

\begin{table}[H]
  \centering
  \caption{Factorable triples $(S,T,U)$ in $D_{2n}=\langle r,s\mid r^n=e,\ s^2=e,\ srs=r^{-1}\rangle$.
  In each row, pick the indicated parameters, take a reflection $x\in S$, and set $T=Ux$.}
  \label{tab:condition-STU-dihedral}
  \rowcolors{3}{gray!25}{white}
  \setlength{\tabcolsep}{10pt}
  \renewcommand{\arraystretch}{1.15}
  \begin{tabular}{@{}p{0.25\textwidth}p{0.05\textwidth}p{0.25\textwidth}p{0.25\textwidth}@{}}
    \toprule
    \textbf{Condition} & \textbf{$S$} & \textbf{$T$} & \textbf{$U$} \\
    \midrule \midrule

    $n$ odd;  &
    $\{\,s\,\}$ &
    $\{\,r^{-1},\,r\,\}$ &
    $\{\,sr,\,sr^{-1}\,\}$ \\

    all $n\ge3$; &
    $\{\,s\,\}$ &
    $\{\,rs,\,r^{-1}s,\,r^{-1},\,r\,\}$ &
    $\{\,r,\,r^{-1},\,sr,\,sr^{-1}\,\}$ \\

    all $n\ge3$;  &
    $\{\,s\,\}$ &
    $\{\,r^{-k}\mid \ 1\le k\le n-1\,\}$ &
    $\{\,sr^k\mid \ 1\le k\le n-1\,\}$ \\

    all $n\ge3$;  $\gcd(m,n)=1$; &
    $\{\,s\,\}$ &
    $\{\,r^{m}s,\,r^{-m}s,\,r^{-u},\,r^{u}\,\}$ &
    $\{\,r^{\pm m},\,sr^{\pm u}\,\}$ \\

    $n$ even;  &
    $\{\,s\,\}$ &
    $\{\,r^{-k}:\ k\ \text{odd}\}\cup\{rs,r^{-1}s\}$ &
    $\{\,sr^{k}\mid \ k\ \text{odd}\}\cup\{r,\,r^{-1}\}$ \\

    all $n\ge3$;  &
    $\{\,s\,\}$ &
    $\{\,rs,\,r^{-1}s,\,r^{-2},\,r^{2}\,\}$ &
    $\{\,r^{\pm1},\,sr^{\pm2}\,\}$ \\

    $n$ odd;  $\gcd(u,n)=1$; &
    $\{\,sr^{a}\,\}$ &
    $\{\,r^{-u},\,r^{u}\,\}$ &
    $\{\,sr^{a+u},\,sr^{a-u}\,\}$ \\

    all $n\ge3$; $\gcd(m,n)=1$ &
    $\{\,sr^{a}\,\}$ &
    $\{\,sr^{a-m},\,sr^{a+m}\,\}\ \cup\ \{\,r^{\mp u_i}\mid i=1,\dots,k\,\}$ &
    $\{r^{\pm m}\}\cup\{sr^{a\pm u_i}\mid i=1,\dots,k\}$ \\

    \bottomrule
  \end{tabular}
\end{table}

\begin{op}
Find all symmetric generating sets $U$ of $D_{2n}$ such that the Cayley graph $\cay(D_{2n};U)$ admits a factorization up to equivalence.
\end{op}

Recently, \citet{KreherPatersonStinson2025} investigated near-factorizations of the dihedral group 
$D_{2n}$ obtained from near-factorizations of the cyclic group.
We extend their results to a broader setting, focusing on factorizations of Cayley graphs by Cayley graphs.

\begin{defn}({\rm\citet{KreherPatersonStinson2025}})\label{def:Pecher}
Let $n\ge 3$ be odd. 
We define the isomorphism $\varphi$ and the function\footnote{$\psi$ is a bijection of sets, not a homomorphism} $\psi$ \[ \begin{array}{rcl} \varphi \colon \Z_{2n} &\longrightarrow& \Z_2 \times \Z_n \\[4pt] x &\longmapsto& \bigl(x \bmod 2,\, x \bmod n\bigr) \end{array} \qquad\text{and}\qquad \begin{array}{rcl} \psi \colon \Z_2 \times \Z_n &\longrightarrow& D_{2n} \\[4pt] (i,j) &\longmapsto& s^{\,i} r^{\,j} \end{array} \]
\begin{itemize}
\item For $X\subseteq \Z_{2n}$, define the \defin{forward P\^echer}
\[
\mathcal P(X)\ \coloneqq \ \psi\bigl(\varphi(X)\bigr)\ \subseteq\ D_{2n}.
\]
\item For $\widetilde X\subseteq D_{2n}$, define the \defin{inverse P\^echer}
\[
\mathcal P^{-1}(\widetilde X)\ \coloneqq \ \varphi^{-1}\!\bigl(\psi^{-1}(\widetilde X)\bigr)\ \subseteq\ \Z_{2n}.
\]
\end{itemize}
For tuples we apply $\mathcal P$ and $\mathcal P^{-1}$ componentwise, e.g.
$\mathcal P(S,T,U)=(\widehat S,\widehat T,\widehat U)$ with $\widehat S=\mathcal P(S)$, etc.
\end{defn}

\begin{nota}
For a set $X\subseteq\Z_{2n}$ write
$\widehat X\coloneqq \mathcal P(X)\subseteq D_{2n}$, and for $\widetilde X\subseteq D_{2n}$ write
$X\coloneqq \mathcal P^{-1}(\widetilde X)\subseteq\Z_{2n}$.
\end{nota}

\begin{thm}\label{thm:Z2n-to-D2n-factorable}
Let $n\ge 3$ be odd. 
Then the following holds:
\begin{enumerate}
\item If $(S,T,U)$ is factorable in $\Z_{2n}$,
then $(\widehat S,\widehat T,\widehat U)$ is factorable in $D_{2n}$.
Moreover, if $(S,T,U)$ is equivalent to $(A,B,C)$ in $\Z_{2n}$, then
$(\widehat S,\widehat T,\widehat U)$ is equivalent to $(\widehat A,\widehat B,\widehat C)$
in $D_{2n}$.
\item If $(\widetilde S,\widetilde T,\widetilde U)$ is factorable in $D_{2n}$, then
$(S,T,U)$ is factorable in $\Z_{2n}$. 
If, in addition,
$|\widetilde S|=k$, $|\widetilde T|=\ell$ satisfy
\[
\gcd\!\Bigl(n,\tfrac{k+1}{2}\Bigr)=\gcd\!\Bigl(n,\tfrac{\ell+1}{2}\Bigr)=1,
\]
then equivalence in $D_{2n}$ pulls back to equivalence in $\Z_{2n}$ in the sense that, whenever
$(\widetilde S,\widetilde T,\widetilde U)$ is equivalent to $(\widetilde A,\widetilde B,\widetilde C)$ in
$D_{2n}$, the triples
\[
\bigl(\varphi^{-1}\psi^{-1}(\widetilde S),\ \varphi^{-1}\psi^{-1}(\widetilde T),\ \varphi^{-1}\psi^{-1}(\widetilde U)\bigr)
\quad\text{and}\quad
(A,B,C)
\]
are equivalent in $\Z_{2n}$.
\end{enumerate}
\end{thm}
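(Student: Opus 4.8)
The first step is to strip off the cyclic structure. Since $n$ is odd, $\varphi\colon\Z_{2n}\to\Z_2\times\Z_n$ is a \emph{group} isomorphism, so \Cref{lem:relabel} carries any factorization identity over $\Z_{2n}$ to the corresponding identity over $\Z_2\times\Z_n$ for the $\varphi$-images and preserves both symmetry and the condition $0\notin\,\cdot\,$. Thus, with $\mathcal P=\psi\circ\varphi$ and $\mathcal P^{-1}=\varphi^{-1}\circ\psi^{-1}$ as in \Cref{def:Pecher}, the theorem is really a comparison of two products on the common underlying set $\Z_2\times\Z_n$: its own group law on one side, and the group law of $D_{2n}$ transported by the bijection $\psi$ on the other. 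The one computation I would do by hand is the twisted multiplication law
\[
\psi(i_1,j_1)\,\psi(i_2,j_2)=\psi\bigl(i_1+i_2,\ (-1)^{i_2}j_1+j_2\bigr),
\]
so the $D_{2n}$-product agrees with the $\Z_2\times\Z_n$-sum up to a sign on the $\Z_n$-coordinate of the first factor, governed by the $\Z_2$-coordinate of the second. Alongside it I would record the parity bookkeeping: for $X\subseteq\Z_2\times\Z_n$ write $X=(\{0\}\times X_0)\sqcup(\{1\}\times X_1)$; then $\psi(X)$ has rotation residues $X_0$ and reflection residues $X_1$, it is symmetric in $D_{2n}$ exactly when $X_0=-X_0$ (no constraint on $X_1$, as reflections are involutions), and it avoids $e$ exactly when $0\notin X_0$; whereas $X$ inside $\Z_{2n}$ is symmetric exactly when $X_0=-X_0$ \emph{and} $X_1=-X_1$. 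This asymmetry is the pivot of the whole statement.

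For part (1), suppose $(S,T,U)$ is factorable in $\Z_{2n}$. By \Cref{thm:conv} this is the convolution identity $\mathbf 1_S*\mathbf 1_T=\mathbf 1_U$, and since $\varphi$ is a group isomorphism it is equivalent to $\mathbf 1_{\varphi(S)}*\mathbf 1_{\varphi(T)}=\mathbf 1_{\varphi(U)}$, which in residues is the pair of $\Z_n$-identities $r_{S_0,T_0}+r_{S_1,T_1}=\mathbf 1_{U_0}$ and $r_{S_0,T_1}+r_{S_1,T_0}=\mathbf 1_{U_1}$, where $r_{A,B}(g)=\#\{(a,b)\in A\times B:a+b=g\}$. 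Using the twisted law, the number of ways to write a rotation $r^g$ as a product from $\mathcal P(S)\times\mathcal P(T)$ is $r_{S_0,T_0}(g)+d_{S_1,T_1}(g)$, and for a reflection $sr^g$ it is $d_{S_0,T_1}(g)+r_{S_1,T_0}(g)$, where $d_{A,B}(g)=\#\{(a,b)\in A\times B:b-a=g\}$. The key remark is that $d_{A,B}=r_{-A,B}$, so negation-invariance of the residue sets makes $d$ and $r$ coincide; and symmetry of $S,T$ in $\Z_{2n}$ supplies exactly $S_0=-S_0$, $S_1=-S_1$, $T_0=-T_0$, $T_1=-T_1$. Hence the $D_{2n}$ representation counts collapse onto the two $\Z_n$-identities above, i.e.\ $\mathcal P(S)\mathcal P(T)=\mathcal P(U)$ with unique representations; since by the parity bookkeeping $\widehat S,\widehat T,\widehat U$ are symmetric in $D_{2n}$ and avoid $e$, \Cref{thm:main_1} gives that $(\widehat S,\widehat T,\widehat U)$ is factorable in $D_{2n}$.

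For the equivalence claims I would exhibit the homomorphism $\Z_{2n}^\times\cong\Aut(\Z_{2n})\to\Aut(D_{2n})$ sending a unit $v$ (necessarily odd) to $\alpha_v$ with $\alpha_v(r)=r^{\,v\bmod n}$ and $\alpha_v(s)=s$, and check on residues that $\mathcal P(vX)=\alpha_v(\mathcal P(X))$; this carries an $\Aut(\Z_{2n})$-equivalence of $(S,T,U)$ with $(A,B,C)$ to the $\Aut(D_{2n})$-equivalence of their Pêchers, completing part (1). Part (2) runs the same computations in reverse, but now $D_{2n}$-symmetry of $(\widetilde S,\widetilde T,\widetilde U)$ only gives $\widetilde S_0=-\widetilde S_0$, $\widetilde T_0=-\widetilde T_0$, $\widetilde U_0=-\widetilde U_0$: the reflection residue parts $\widetilde S_1,\widetilde T_1$ need not be negation-invariant, so $\mathcal P^{-1}(\widetilde S)$ need not be symmetric in $\Z_{2n}$ and the product identity need not descend as it stands. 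The plan is to read off the $\Z_n$-level constraints from the dihedral splitting \eqref{eq:UR-UM}, and then use the hypotheses $\gcd\!\bigl(n,\tfrac{k+1}{2}\bigr)=\gcd\!\bigl(n,\tfrac{\ell+1}{2}\bigr)=1$ together with oddness of $n$ to solve a linear congruence for a translation $m$ with which the diagonal automorphism $r\mapsto r,\ s\mapsto sr^m$ recenters $\widetilde S_1$ (respectively $\widetilde T_1$) to a negation-invariant set; after this normalization $\mathcal P^{-1}$ does land in symmetric subsets of $\Z_{2n}$ and the transfer proceeds as in part (1). For the equivalence pull-back one uses that $\Aut(D_{2n})$ is the holomorph $\Z_n\rtimes\Z_n^\times$, strictly larger than the image of $\Aut(\Z_{2n})$ (the subgroup $\{\alpha_v\}$ fixing $s$); the same gcd hypotheses let one absorb the $s\mapsto sr^m$ part of any $D_{2n}$-equivalence into such a translation, reducing it to some $\alpha_v$, which then pulls back to $\Aut(\Z_{2n})$.

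The main obstacle is precisely this reflection-residue bookkeeping in part (2). In part (1) the negation-invariance of $S_1,T_1$ is handed to us by the $\Z_{2n}$-symmetry hypothesis, the twist in the multiplication law is automatically invisible, and everything is routine; in part (2) the $D_{2n}$-symmetry hypothesis is genuinely weaker (any set of reflections is symmetric), so that invariance must be recovered, and this recovery — equivalently, showing the sign twist is invisible to the triple at hand — is exactly what pulls the arithmetic conditions $\gcd(n,\tfrac{k+1}{2})=\gcd(n,\tfrac{\ell+1}{2})=1$ and the oddness of $n$ into the statement. Checking that the recentering is always possible (and unique up to the allowed equivalences) is the technical heart; everything else is the CRT reduction of the first paragraph together with the two short residue computations above.
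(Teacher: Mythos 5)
Your part (1) is essentially the paper's own argument: transport everything through $\varphi$ to $\Z_2\times\Z_n$, check the product patterns against the dihedral twist $(sr^k)(sr^\ell)=r^{\ell-k}$, use negation-invariance of \emph{both} residue slices (which $\Z_{2n}$-symmetry supplies) to identify the dihedral representation counts with the abelian ones, and transport multiplier equivalences through the intertwining $\psi\circ f=f_{u,0}\circ\psi$; this matches the paper, with your $d_{A,B}=r_{-A,B}$ bookkeeping just making explicit the paper's ``replace $k$ by $-k$'' step, and the conclusion then follows from \Cref{thm:main_1} as in the paper.

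Part (2) is where your proposal has a genuine gap. Your diagnosis is sound: $D_{2n}$-symmetry does not force the reflection-residue slices to be negation-invariant, so the direct descent of the counting identity can fail (for instance $(\{sr\},\{s,sr^2\},\{r,r^{-1}\})$ is factorable in $D_{10}$ but its inverse P\^echer $(\{1\},\{5,7\},\{4,6\})$ in $\Z_{10}$ is neither symmetric nor a correct product); the paper's proof of (2) simply reruns the part-(1) patterns, implicitly using this invariance, which does hold in its intended strongly symmetric application (\Cref{cor:ours-implies-KPS8}). But your repair does not close the gap. The factorability claim in (2) carries no gcd hypotheses --- those are attached only to the equivalence clause --- so you cannot invoke them to normalize; and even where they are available, the automorphism $r\mapsto r$, $s\mapsto sr^m$ translates the reflection residues of $\widetilde S$, $\widetilde T$, $\widetilde U$ all by the same $m$, while a general subset of $\Z_n$ is not a translate of any negation-invariant set (e.g.\ $\{0,1,3\}\subseteq\Z_7$ admits no reflective symmetry), so ``solve a linear congruence for $m$'' is an assertion, not a proof: you give no argument that factorability forces the required simultaneous reflective symmetry of all three reflection slices, and you acknowledge this is ``the technical heart'' without carrying it out. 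Moreover, even if such a recentering existed, it would yield factorability of an \emph{equivalent} triple, not of $\bigl(\varphi^{-1}\psi^{-1}(\widetilde S),\varphi^{-1}\psi^{-1}(\widetilde T),\varphi^{-1}\psi^{-1}(\widetilde U)\bigr)$ as the statement asserts. Finally, the equivalence pull-back --- the one place the gcd conditions genuinely enter, and where the paper does concrete work with the affine maps $F_{u,v}(i,j)=(i,uj+iv)$ and a coset-counting argument forcing $v\equiv 0\pmod n$ because a reflection slice of size $(k\pm1)/2$ cannot be a union of nontrivial $\langle v\rangle$-cosets --- is only gestured at in your last sentence (``absorb the $s\mapsto sr^m$ part''), with none of that counting done.
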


\begin{proof}
By~\Cref{thm:main_1}, we know that $(S,T,U)$ is factorable in $\Z_{2n}$ if and only if 
the “unique-sum” counts satisfy
\[
N_{S,T}(x)=
\begin{cases}
1,&x\in U,\\
0,&x\notin U,
\end{cases}
\qquad
N_{S,T}(x)\coloneqq |\{(s,t)\in S\times T\mid \ s+t=x\}|.
\]
Next we set the following sets
$S^*\coloneqq \varphi(S)$, $T^*\coloneqq \varphi(T)$, $U^*\coloneqq \varphi(U)\subseteq\Z_2\times\Z_n$.
We shall show that the same uniqueness holds for componentwise opearation in $\Z_2\times\Z_n$.
We note that every $(\alpha,\beta)\in U^*$ has a \emph{unique} representation
$(\alpha,\beta)=(i,k)+(i',\ell)$ with $(i,k)\in S^*$, $(i',\ell)\in T^*$, as we desired.\\
Let $\widehat S=\psi(S^*)$, $\widehat T=\psi(T^*)$, $\widehat U=\psi(U^*)$.
We show that the “unique-product” condition holds in $D_{2n}$.
We pick up an element $x\in D_{2n}\sm \{e\}$.
Then we have the following cases:
\begin{itemize}
\item If $x=r^j$ is a rotation, write $(0,j)=(i,k)+(i',\ell)$ in $\Z_2\times\Z_n$ with
$(i,k)\in S^*$, $(i',\ell)\in T^*$. If $i=i'=0$ then
$\psi(i,k)\psi(i',\ell)=r^{k}r^{\ell}=r^j$. If $i=i'=1$ then
$\psi(i,k)\psi(i',\ell)=(sr^{k})(sr^{\ell})=r^{\ell-k}$; by symmetry of $S^*,T^*$
we can replace $k$ by $-k$ to get $r^{(-k)+\ell}=r^j$. 
\item If $x=sr^j$ is a reflection, write $(1,j)=(i,k)+(i',\ell)$. If $(i,i')=(0,1)$ then
$\psi(i,k)\psi(i',\ell)=r^{k}(sr^{\ell})=sr^{k+\ell}=sr^j$. 
If $(i,i')=(1,0)$ then
$(sr^{k})r^{\ell}=sr^{k+\ell}=sr^j$.
\end{itemize}

Thus every non-identity element of $D_{2n}$ lies in $\widehat S\,\widehat T$.
Next we shall show the uniqueness.
Suppose $x=\widehat s_1\widehat t_1=\widehat s_2\widehat t_2$ with
$\widehat s_i\in\widehat S$, $\widehat t_{\ell}\in\widehat T$ for $\ell=1,2$.
Apply $\psi^{-1}$ to obtain two representations of either $(0,j)$ or $(1,j)$ in
$\Z_2\times\Z_n$ as $(i,k)+(i',\ell)$. Uniqueness in $\Z_2\times\Z_n$ forces
$(i,k)=(i',k')$ and $(i',\ell)=(i'',\ell')$, hence $\widehat s_1=\widehat s_2$ and
$\widehat t_1=\widehat t_2$. Therefore the product representation in $D_{2n}$ is unique.
It follows from~\Cref{thm:main_1} that the uniqueness of representations is equivalent to
$A(D_{2n};\widehat U)=A(D_{2n};\widehat S)\,A(D_{2n};\widehat T)$, as required.

Next we show the equivalence under automorphisms.
More precisely we show
if $(S,T,U)$ is equivalent to $(A,B,C)$ in $\Z_{2n}$ via an automorphism
$x\mapsto ux$ with $u\in\Z_{2n}^\times$, then 
$(\widehat S,\widehat T,\widehat U)$ is equivalent to $(\widehat A,\widehat B,\widehat C)$
in $D_{2n}$ via the automorphism $f\in\Aut(D_{2n})$ defined by $f(r)=r^{\,u\bmod n}$, $f(s)=s$.

If $(A,B,C)=(uS,uT,uU)$ with $u\in\Z_{2n}^\times$, then
$S^*\mapsto f(S^*)$, $T^*\mapsto f(T^*)$, $U^*\mapsto f(U^*)$ under
$f(i,j)=(i,\,u j)$ on $\Z_2\times\Z_n$. 
Next we consider the following automorphism of $D_{2n}$
\[
\begin{array}{rcl}
f_{u,0} \colon D_{2n} &\longrightarrow& D_{2n} \\[4pt]
r &\longmapsto& r^{\,u}, \\[2pt]
s &\longmapsto& s
\end{array}
\]
Then it is not hard to see that 
$\psi\circ f=f_{u,0}\circ \psi$
 which implies $(\widehat S,\widehat T,\widehat U)\mapsto
\bigl(f_{u,0}(\widehat S),\,f_{u,0}(\widehat T),\,f_{u,0}(\widehat U)\bigr)$,
i.e., equivalence is preserved. 

Let $(\widetilde S,\widetilde T,\widetilde U)$ is factorable in $D_{2n}$.
Then we set
$S^*=\psi^{-1}(\widetilde S)$, $T^*=\psi^{-1}(\widetilde T)$, $U^*=\psi^{-1}(\widetilde U)$
in $\Z_2\times\Z_n$ and then $S=\varphi^{-1}(S^*)$, etc.
The same two product patterns as above show that every nonzero $(\alpha,\beta)$
has a unique decomposition as a sum from $S^*,T^*$.
By applying $\varphi^{-1}$, we derive the uniqueness in $\Z_{2n}$, hence $A(\Z_{2n};U)=A(\Z_{2n};S)A(\Z_{2n};T)$.

We need to show that
$(\widetilde S,\widetilde T,\widetilde U)$ and $(\widetilde A,\widetilde B,\widetilde C)$
are equivalent if there exists $f\in\Aut(D_{2n})$ with
$(\widetilde A,\widetilde B,\widetilde C)=(f(\widetilde S),f(\widetilde T),f(\widetilde U))$.
For odd $n$, every automorphism has the form
\[
f_{u,v}:\quad r\mapsto r^{\,u},\qquad s\mapsto s\,r^{\,v},
\qquad (u\in\Z_n^{\times},\ v\in\Z_n).
\]
Conjugating by $\psi$ shows the induced action on $\Z_2\times\Z_n$ is the affine map
\[
F_{u,v}:\ (i,j)\longmapsto\bigl(i,\;u\,j+i\,v\bigr).
\]
Thus if $(\widetilde A,\widetilde B,\widetilde C)=\bigl(f_{u,v}(\widetilde S),f_{u,v}(\widetilde T),
f_{u,v}(\widetilde U)\bigr)$, then with $S^*=\psi^{-1}(\widetilde S)$, etc.,
\[
A^*=F_{u,v}(S^*),\qquad B^*=F_{u,v}(T^*),\qquad C^*=F_{u,v}(U^*).
\]
Applying $\varphi^{-1}$ gives the pulled–back triple $(A,B,C)\subseteq\Z_{2n}$.

Let us fix $k=|\widetilde S|$, $\ell=|\widetilde T|$, and let
$s\coloneqq |\widetilde S\cap s\langle r\rangle|$, $t\coloneqq |\widetilde T\cap s\langle r\rangle|$
be the counts of reflections in the two factors. 
It is not hard to see that 
\[
s=\frac{k+\varepsilon}{2},\qquad t=\frac{\ell-\varepsilon}{2}
\quad\text{for some }\varepsilon\in\{+1,-1\};
\]
in particular, one of $\frac{k+1}{2},\frac{k-1}{2}$ equals $s$, and one of
$\frac{\ell+1}{2},\frac{\ell-1}{2}$ equals $t$.

Under $F_{u,v}$, the rotation indices (the $i=0$ slice of $S^*$) are multiplied by $u$,
while the reflection indices (the $i=1$ slice) are sent to $u\,M_S+v$, where
$M_S\coloneqq \{\,j:\ (1,j)\in S^*\,\}\subseteq\Z_n$.
If the pulled–back equivalence on $\Z_{2n}$ is to be a pure multiplier $x\mapsto u'x$,
then necessarily $v\equiv 0\pmod n$; otherwise the reflection index set would be
translated by a nonzero $v$. To force $v\equiv 0$, observe that
\[
u\,M_S+v\ =\ M_A\quad\Longrightarrow\quad M_S+v\ =\ u^{-1}M_A,
\]
so $M_S$ is invariant under translation by $v$. Hence $M_S$ is a union of cosets of
the subgroup $\langle v\rangle\le\Z_n$, and therefore $|M_S|$ is a multiple of
$n/\gcd(n,v)$. But $|M_S|=s$ equals either $\tfrac{k+1}{2}$ or $\tfrac{k-1}{2}$.
If, in particular, $\gcd\!\bigl(n,\tfrac{k+1}{2}\bigr)=1$, then the only way a nonempty
subset of size $s$ can be a union of cosets of $\langle v\rangle$ is $v\equiv 0\pmod n$.
Applying the same argument to $T$ and the hypothesis
$\gcd\!\bigl(n,\tfrac{\ell+1}{2}\bigr)=1$ yields $v\equiv 0\pmod n$ as well.

Consequently the affine map reduces to $F_{u,0}(i,j)=(i,uj)$, which after $\varphi^{-1}$
becomes the multiplier $x\mapsto u'x$ on $\Z_{2n}$ (with $u'\equiv u\pmod n$, $u'\equiv 1\pmod 2$).
Thus the pulled–back triples $(A,B,C)$ and $(S,T,U)$ are equivalent in $\Z_{2n}$ via
an automorphism $x\mapsto u'x$.
\end{proof}

\begin{defn}
Let $n\ge 3$ be odd.
A subset $\widetilde X\subseteq D_{2n}$ is \defin{strongly symmetric} if, writing
$R=\{r^j:\ j\in\Z_n\}$ and $M=\{sr^j:\ j\in\Z_n\}$, then we have
\[
r^j\in \widetilde X\cap R\ \Longleftrightarrow\ r^{-j}\in \widetilde X\cap R,\qquad
sr^j\in \widetilde X\cap M\ \Longleftrightarrow\ sr^{-j}\in \widetilde X\cap M.
\]
\end{defn}

\begin{thm}({\rm \citet[Theorem 8]{KreherPatersonStinson2025}})\label{thm:KPS8}
Let $n$ be odd. If $(\widetilde S,\widetilde T, D_{2n}\sm \{e\})$ is a \emph{strongly symmetric}
near–factorization of $D_{2n}$, then
$\bigl(\mathcal P^{-1}(\widetilde S), \mathcal P^{-1}(\widetilde T),\Z_{2n}\sm \{0\}\bigr)$
is a \emph{symmetric} near–factorization of $\Z_{2n}$.
\end{thm}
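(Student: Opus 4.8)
The plan is to reduce the claim of \Cref{thm:KPS8} to the general machinery already developed, in particular \Cref{thm:Z2n-to-D2n-factorable}(2) and \Cref{thm:main_1}, while carefully tracking how the P\^echer map interacts with symmetry. First I would observe that a near--factorization $(\widetilde S,\widetilde T,D_{2n}\sm\{e\})$ of $D_{2n}$ is precisely the statement that $(\widetilde S,\widetilde T,D_{2n}\sm\{e\})$ is factorable in $D_{2n}$ in our sense, by the \Cref{thm:main_1} dictionary (unique representations $\iff$ the group-algebra identity $\iff$ the adjacency identity), together with the observation that $\mathcal P^{-1}(D_{2n}\sm\{e\})=\Z_{2n}\sm\{0\}$ since $\varphi,\psi$ are bijections carrying $e=s^0r^0$ to $0$. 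Hence by \Cref{thm:Z2n-to-D2n-factorable}(2), $\bigl(\mathcal P^{-1}(\widetilde S),\mathcal P^{-1}(\widetilde T),\Z_{2n}\sm\{0\}\bigr)$ is factorable in $\Z_{2n}$, which is the same as being a near--factorization of $\Z_{2n}$. So the only thing left is to show that this near--factorization is \emph{symmetric}, i.e. that $\mathcal P^{-1}(\widetilde S)=-\mathcal P^{-1}(\widetilde S)$ and likewise for $\widetilde T$, given that $\widetilde S,\widetilde T$ are strongly symmetric.

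The second and main step is this symmetry transfer. I would compute $\psi^{-1}$ and $\varphi^{-1}$ explicitly: for $x\in\Z_{2n}$, $\varphi(x)=(x\bmod 2,\,x\bmod n)$, and $\psi(i,j)=s^ir^j$, so $\mathcal P(x)=s^{\,x\bmod 2}r^{\,x\bmod n}$. Thus a rotation $r^j\in\widetilde X$ corresponds to those $x\in\Z_{2n}$ with $x\equiv 0\pmod 2$ and $x\equiv j\pmod n$, while a reflection $sr^j\in\widetilde X$ corresponds to $x\equiv 1\pmod 2$ and $x\equiv j\pmod n$. Negation $x\mapsto -x$ in $\Z_{2n}$ acts as $x\bmod 2\mapsto x\bmod 2$ (since $2\mid 2n$, $-x\equiv x\pmod 2$) and $x\bmod n\mapsto -x\bmod n$; equivalently, on $\Z_2\times\Z_n$ it is the map $(i,j)\mapsto(i,-j)$. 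Under $\psi$ this is exactly $s^ir^j\mapsto s^ir^{-j}$ — that is, it fixes the reflection/rotation type and inverts the exponent $j$. Therefore $\mathcal P^{-1}(\widetilde S)$ is inverse--closed in $\Z_{2n}$ if and only if $\widetilde S$ is closed under $s^ir^j\mapsto s^ir^{-j}$, which is precisely the strong symmetry condition ($r^j\in\widetilde S\cap R\iff r^{-j}\in\widetilde S\cap R$ and $sr^j\in\widetilde S\cap M\iff sr^{-j}\in\widetilde S\cap M$). The same argument applies to $\widetilde T$, giving that $\mathcal P^{-1}(\widetilde T)$ is inverse--closed.

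I would then assemble: $\mathcal P^{-1}(\widetilde S)$ and $\mathcal P^{-1}(\widetilde T)$ are inverse--closed subsets of $\Z_{2n}\sm\{0\}$ (they avoid $0$ since $e\notin\widetilde S\cup\widetilde T$ and $\mathcal P^{-1}$ is a bijection sending $e\mapsto 0$), and they form a near--factorization of $\Z_{2n}$ by the factorability transfer above; hence they constitute a symmetric near--factorization of $\Z_{2n}$, as claimed. The main obstacle here is not any deep argument but rather getting the bookkeeping of the two bijections $\varphi,\psi$ exactly right — in particular confirming that $x\mapsto -x$ in $\Z_{2n}$ is intertwined by $\psi\circ\varphi$ with the "type-preserving, exponent-inverting" involution of $D_{2n}$ that appears in the definition of strong symmetry. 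This hinges on the elementary but essential fact that $-x\equiv x\pmod 2$ for all $x$, which is what keeps reflections going to reflections (rather than, say, $s^i\mapsto s^{-i}$ producing sign issues). One should also note, for completeness, that the definition of strong symmetry is exactly designed so that $\widetilde X$ being strongly symmetric is equivalent to $\mathcal P^{-1}(\widetilde X)$ being symmetric in $\Z_{2n}$, so there is nothing to check beyond this translation; the factorability half is then immediate from \Cref{thm:Z2n-to-D2n-factorable}(2).
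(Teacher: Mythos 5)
Your proposal is correct and follows essentially the same route as the paper's own derivation (its corollary deducing \Cref{thm:KPS8} from \Cref{thm:Z2n-to-D2n-factorable}(2)): apply the pullback with $\widetilde U=D_{2n}\sm\{e\}$, note $\mathcal P^{-1}(D_{2n}\sm\{e\})=\Z_{2n}\sm\{0\}$, and check that strong symmetry of $\widetilde S,\widetilde T$ yields $S=-S$, $T=-T$. Your explicit computation that negation on $\Z_{2n}$ intertwines with the type-preserving, exponent-inverting involution $s^ir^j\mapsto s^ir^{-j}$ just spells out the symmetry-transfer step the paper states tersely (and mirrors its Lemma on symmetric versus strongly symmetric sets), so there is no substantive difference.
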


\begin{cor}\label{cor:ours-implies-KPS8}
\Cref{thm:Z2n-to-D2n-factorable} $\Rightarrow$~\Cref{thm:KPS8}
\end{cor}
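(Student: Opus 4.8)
The plan is to recognize \Cref{thm:KPS8} as the special case $\widetilde U=D_{2n}\sm\{e\}$ of \Cref{thm:Z2n-to-D2n-factorable}(2), bracketed by two applications of \Cref{thm:main_1} that translate between the language of near–factorizations and that of factorable triples. First, by the definition of a near–factorization together with \Cref{thm:main_1} and \Cref{lem:product_2}, a pair $(\widetilde S,\widetilde T)$ with $e\notin\widetilde S\cup\widetilde T$ forms a symmetric near–factorization of $D_{2n}$ exactly when $(\widetilde S,\widetilde T,D_{2n}\sm\{e\})$ is factorable in $D_{2n}$ (the unique representation $g=st$ for each $g\neq e$ is condition (2) of \Cref{thm:main_1}, while symmetry of $\widetilde S,\widetilde T$ already forces $\widetilde S\cap\widetilde T=\varnothing$ by \Cref{lem:product_2}). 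So I would begin by noting that a \emph{strongly symmetric} near–factorization is in particular a symmetric one: every reflection $sr^j$ is an involution, so the reflection part of $\widetilde X^{-1}$ coincides with that of $\widetilde X$ automatically, and the rotation condition $r^j\in\widetilde X\Leftrightarrow r^{-j}\in\widetilde X$ is built into strong symmetry. Hence $(\widetilde S,\widetilde T,D_{2n}\sm\{e\})$ is factorable in $D_{2n}$.

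Next I would apply \Cref{thm:Z2n-to-D2n-factorable}(2) to this triple, which yields that $\bigl(\mathcal P^{-1}(\widetilde S),\,\mathcal P^{-1}(\widetilde T),\,\mathcal P^{-1}(D_{2n}\sm\{e\})\bigr)$ is factorable in $\Z_{2n}$. It then remains only to identify the third coordinate and to read the output back as a near–factorization. Since $n$ is odd, $\varphi\colon\Z_{2n}\xrightarrow{\ \cong\ }\Z_2\times\Z_n$ is the CRT isomorphism and $\psi\colon(i,j)\mapsto s^ir^j$ is a bijection of sets, so $\mathcal P=\psi\circ\varphi$ is a bijection $\Z_{2n}\to D_{2n}$; moreover $\mathcal P(0)=\psi(\varphi(0))=\psi(0,0)=e$, whence $\mathcal P^{-1}(D_{2n}\sm\{e\})=\Z_{2n}\sm\{0\}$. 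Feeding $U=\Z_{2n}\sm\{0\}$ back through \Cref{thm:main_1} and \Cref{lem:product_2} as in the first step, a factorable triple $(S,T,\Z_{2n}\sm\{0\})$ in $\Z_{2n}$ is precisely a symmetric near–factorization of $\Z_{2n}$, and $(S,T)=\bigl(\mathcal P^{-1}(\widetilde S),\mathcal P^{-1}(\widetilde T)\bigr)$ is the near–factorization claimed in \Cref{thm:KPS8}.

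The only place where the \emph{strong} symmetry hypothesis is genuinely consumed is in guaranteeing that $\mathcal P^{-1}(\widetilde S)$ and $\mathcal P^{-1}(\widetilde T)$ are symmetric in $\Z_{2n}$ (this is part of what ``factorable in $\Z_{2n}$'' asserts, but it is worth isolating). I would check it directly: for $x\in\Z_{2n}$ write $\varphi(x)=(x\bmod 2,\,x\bmod n)=(i,j)$; because $-1\equiv 1\pmod 2$ we have $\varphi(-x)=(i,-j)$, so under $\mathcal P$ negation on $\Z_{2n}$ corresponds to the map $s^ir^j\mapsto s^ir^{-j}$ on $D_{2n}$. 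Consequently $\mathcal P^{-1}(\widetilde X)$ is symmetric in $\Z_{2n}$ if and only if $r^j\in\widetilde X\Leftrightarrow r^{-j}\in\widetilde X$ \emph{and} $sr^j\in\widetilde X\Leftrightarrow sr^{-j}\in\widetilde X$, i.e.\ if and only if $\widetilde X$ is strongly symmetric; plain symmetry, which imposes no condition on the reflection part, would not suffice. I do not expect a serious obstacle here: apart from this bookkeeping — tracking which flavour of symmetry lives in which group and verifying that the identity is carried to the identity — the argument is a formal unwinding of definitions together with the two invocations of \Cref{thm:main_1}.
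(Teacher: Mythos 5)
Your proposal is correct and follows essentially the same route as the paper: apply \Cref{thm:Z2n-to-D2n-factorable}(2) with $\widetilde U=D_{2n}\sm\{e\}$, identify $\mathcal P^{-1}(D_{2n}\sm\{e\})=\Z_{2n}\sm\{0\}$, and use the strong symmetry of $(\widetilde S,\widetilde T)$ to conclude $S=-S$, $T=-T$. Your extra bookkeeping (translating near--factorization language through \Cref{thm:main_1} and \Cref{lem:product_2}, and the explicit check that $\varphi(-x)=(i,-j)$ forces strong symmetry to be exactly what symmetry in $\Z_{2n}$ requires) only makes explicit what the paper's proof leaves implicit.
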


\begin{proof}
We invoke~\Cref{thm:Z2n-to-D2n-factorable}(b) with 
$\widetilde U=D_{2n}\sm \{e\}$ and a strongly symmetric pair
$(\widetilde S,\widetilde T)$ of $D_{2n}$.
Then $(S,T,U)=\bigl(\mathcal P^{-1}(\widetilde S),\,\mathcal P^{-1}(\widetilde T),\,
\varphi^{-1}(\psi^{-1}(\widetilde U))\bigr)$ is factorable in $\Z_{2n}$.
Since $\widetilde U=D_{2n}\sm \{e\}$, we have
$U=\varphi^{-1}(\psi^{-1}(\widetilde U))=\Z_{2n}\sm \{0\}$, so $(S,T)$ is a near–factorization of
$\Z_{2n}$. Moreover, the strong symmetry of $(\widetilde S,\widetilde T)$ implies
$S=-S$ and $T=-T$, i.e.\ $(S,T)$ is \emph{symmetric}.
\end{proof}

\begin{lem}\label{lem:symmetric-to-strongly-symmetric}
Let $n\ge3$ be odd and let $X\subseteq \mathbb Z_{2n}$ be symmetric.
Then $\widehat X\coloneqq\mathcal P(X)\subseteq D_{2n}$ is strongly symmetric.
Moreover, if $0\notin X$, then $e\notin\widehat X$.
\end{lem}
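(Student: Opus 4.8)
The plan is to unwind the definitions of $\mathcal P$, $\varphi$, and $\psi$ from \Cref{def:Pecher} and track what the hypothesis $X=-X$ becomes after each map. Since $n$ is odd we have $\gcd(2,n)=1$, so $\varphi$ is a genuine isomorphism and $\psi$ is a bijection of sets; the one observation that does all the work is that negation on $\Z_{2n}$ fixes the mod-$2$ coordinate. Concretely, I would first record: if $\varphi(x)=(i,j)$, then $\varphi(-x)=(i,-j)$, because $-x\equiv x\pmod 2$ while $-x\equiv -j\pmod n$.

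With that in hand, take $r^j\in\widehat X\cap R$. Since $\psi$ is a bijection and $\psi(0,j)=r^j$, this says $(0,j)\in\varphi(X)$, i.e.\ $x\in X$ for some $x$ with $\varphi(x)=(0,j)$. By symmetry $-x\in X$, and by the negation identity $\varphi(-x)=(0,-j)$, so $r^{-j}=\psi(0,-j)\in\widehat X$; the converse is identical. Repeating verbatim with first coordinate $1$ (using $\psi(1,j)=sr^j$) gives $sr^j\in\widehat X\cap M\iff sr^{-j}\in\widehat X\cap M$. These are exactly the two conditions in the definition of strongly symmetric, so $\widehat X$ is strongly symmetric.

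For the ``moreover'' clause I would just note $\varphi(0)=(0,0)$ and $\psi(0,0)=s^0r^0=e$; bijectivity of $\varphi$ and $\psi$ then gives $e\in\widehat X\iff(0,0)\in\varphi(X)\iff 0\in X$, so $0\notin X$ forces $e\notin\widehat X$.

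I do not anticipate a genuine obstacle: the argument is a direct verification. The only subtlety worth stating carefully is that $x\mapsto -x$ on $\Z_{2n}$ does not change parity, hence cannot turn a rotation into a reflection --- which is precisely the reason a merely \emph{symmetric} set in $\Z_{2n}$ maps to a \emph{strongly} symmetric set in $D_{2n}$, not just a symmetric one.
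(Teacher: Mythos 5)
Your proposal is correct and follows essentially the same route as the paper: the key identity $\varphi(-x)=(i,-j)$ (negation preserves the mod-$2$ coordinate since $n$ is odd), the two-case check for rotations and reflections, and the observation that $0$ is the unique preimage of $e$ under $\mathcal P$. No gaps.
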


\begin{proof}
For $x\in\mathbb Z_{2n}$ write $\varphi(x)=(i,j)\in\mathbb Z_2\times\mathbb Z_n$ and $\mathcal P(x)=\psi(i,j)=s^i r^j$.
One can see that
$\varphi(-x) = (-x\bmod2,\,-x\bmod n)= (i,\,-j)$, as $-1= 1 \bmod2$.
Hence, we obtain the following
\[
\mathcal P(-x) = \psi(i,-j) = s^i r^{-j}.
\] 
Suppose $r^j\in\widehat X\cap R$. Then there is some even $x\in X$ with
  $\varphi(x)=(0,j)$, so $\mathcal P(x)=r^j$.  
  Symmetry of $X$ gives $-x\in X$, and then we get the following
  \[
  \mathcal P(-x)=s^0 r^{-j} = r^{-j}\in\widehat X\cap R.
  \]
  The converse is identical with $j$ replaced by $-j$, so
  $r^j\in\widehat X\cap R$ if and only if  $r^{-j}\in\widehat X\cap R$.
We next suppose $sr^j\in\widehat X\cap M$. Then there is some odd $x\in X$ with
  $\varphi(x)=(1,j)$, so $\mathcal P(x)=sr^j$.  
  Again by $X=-X$, we have $-x\in X$, and
  $\mathcal P(-x)=s^1 r^{-j} = s r^{-j}\in\widehat X\cap M$.
  As before, the converse follows by replacing $j$ with $-j$, giving
  $sr^j\in\widehat X\cap M$ if and only if $sr^{-j}\in\widehat X\cap M$.

Thus $\widehat X$ is strongly symmetric.
Finally, $0\in\mathbb Z_{2n}$ is the unique element with
$\varphi(0)=(0,0)$ and hence $\mathcal P(0)=r^0=e$.
So if $0\notin X$, no element of $X$ maps to $e$, i.e.\ $e\notin\widehat X$.
\end{proof}

\begin{cor}\label{cor:symmetric-to-strongly-symmetric-factorable}
Let $n\ge3$ be odd and let $(S,T,U)$ be factorable in $\mathbb Z_{2n}$.
Assume that $S,T,U\subseteq\mathbb Z_{2n}\sm\{0\}$ are symmetric with 
$\widehat S\coloneqq\mathcal P(S),
\widehat T\coloneqq\mathcal P(T)$, and
$\widehat U\coloneqq\mathcal P(U)$.
Then the following holds:
\begin{enumerate}
\item $(\widehat S,\widehat T,\widehat U)$ is factorable in $D_{2n}$; and
\item each of $\widehat S,\widehat T,\widehat U$ is strongly symmetric.
In particular, $(\widehat S,\widehat T,\widehat U)$ is a strongly symmetric matrix product factorization in $D_{2n}$.
\end{enumerate}
Moreover, if $(S,T,U)$ and $(A,B,C)$ are equivalent symmetric factorizations in $\mathbb Z_{2n}$,
then $(\widehat S,\widehat T,\widehat U)$ and $(\widehat A,\widehat B,\widehat C)$ are equivalent strongly symmetric factorizations in $D_{2n}$.
\end{cor}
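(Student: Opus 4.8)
The plan is to assemble the corollary directly from \Cref{thm:Z2n-to-D2n-factorable}(a) and \Cref{lem:symmetric-to-strongly-symmetric}; essentially all of the substantive work has already been carried out in those two results, so what remains is bookkeeping and checking that the hypotheses line up.

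First I would invoke \Cref{thm:Z2n-to-D2n-factorable}(a) applied to the factorable triple $(S,T,U)$ in $\Z_{2n}$. This immediately gives part (1): $(\widehat S,\widehat T,\widehat U)$ is factorable in $D_{2n}$. The same theorem also supplies the first half of the ``moreover'' clause: if $(S,T,U)$ is equivalent to $(A,B,C)$ in $\Z_{2n}$ via a multiplier $x\mapsto ux$ with $u\in\Z_{2n}^{\times}$, then $(\widehat S,\widehat T,\widehat U)$ is equivalent to $(\widehat A,\widehat B,\widehat C)$ in $D_{2n}$ via the automorphism $f_{u,0}$ with $r\mapsto r^{u}$, $s\mapsto s$.

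Next I would establish part (2). Since $n$ is odd and $S,T,U$ are symmetric subsets of $\Z_{2n}\sm\{0\}$, the hypotheses of \Cref{lem:symmetric-to-strongly-symmetric} are satisfied for each of $S$, $T$, and $U$ separately. Applying the lemma three times, $\widehat S=\mathcal P(S)$, $\widehat T=\mathcal P(T)$, and $\widehat U=\mathcal P(U)$ are each strongly symmetric, and the ``moreover'' part of that lemma gives $e\notin\widehat S\cup\widehat T\cup\widehat U$ because $0\notin S\cup T\cup U$. Combined with part (1), the triple $(\widehat S,\widehat T,\widehat U)$ is therefore a factorization in $D_{2n}$ all of whose three sets are strongly symmetric subsets of $D_{2n}\sm\{e\}$, which is exactly what is meant by a strongly symmetric matrix product factorization.

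Finally, for the equivalence statement I would note that the same two lemmas apply to $(A,B,C)$: since $A,B,C$ are symmetric and avoid $0$, the sets $\widehat A,\widehat B,\widehat C$ are strongly symmetric (and miss $e$), so by part (1) of this corollary $(\widehat A,\widehat B,\widehat C)$ is a strongly symmetric factorization in $D_{2n}$; together with the equivalence of $(\widehat S,\widehat T,\widehat U)$ and $(\widehat A,\widehat B,\widehat C)$ from the first paragraph, this yields that the two are equivalent strongly symmetric factorizations. I do not expect a genuine obstacle here — the corollary is a repackaging of \Cref{thm:Z2n-to-D2n-factorable} and \Cref{lem:symmetric-to-strongly-symmetric} — so the only point requiring care is verifying that the standing hypotheses ($n$ odd, each of $S,T,U$ symmetric, $0\notin S\cup T\cup U$) are exactly those demanded by \Cref{lem:symmetric-to-strongly-symmetric}, which they are by assumption.
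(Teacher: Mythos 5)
Your proposal is correct and matches the paper's intended argument: the corollary is stated without a separate proof precisely because it follows by combining \Cref{thm:Z2n-to-D2n-factorable}(1) (factorability in $D_{2n}$ and preservation of equivalence under $\mathcal P$) with \Cref{lem:symmetric-to-strongly-symmetric} applied to each of $S,T,U$ (and likewise to $A,B,C$). Your verification that the standing hypotheses ($n$ odd, each set symmetric, $0\notin S\cup T\cup U$, hence $e\notin\widehat S\cup\widehat T\cup\widehat U$) are exactly those required is all that needs to be said.
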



\section{Further Research}
We close the paper, with a list of different directions for further research.
\begin{enumerate}
\item In this paper we investigated \emph{Cayley-by-Cayley factorizations}
in which each factor is itself the adjacency matrix of a Cayley graph on the same group~$G$.
Because Cayley graphs form the $1$-class of the standard group association scheme, this viewpoint naturally extends to the broader setting of \emph{association schemes}.
Accordingly, the study of Cayley-by-Cayley factorizations leads to the following general problem:
\begin{op}
Given an association scheme $\mathfrak X=(X,\{A_0,\dots,A_d\})$, determine when a relation matrix
$A_i$ factors as a matrix product $A_i=A_jA_k$ of other adjacency matrices from the same scheme.
\end{op}
We refer to such decompositions as \emph{factorizations within an association scheme}.
They generalize our group-theoretic Cayley-by-Cayley case, and they connect combinatorial
factorizations with algebraic structure inside the Bose–Mesner algebra of~$\mathfrak X$.
\item Let $G$ be a simple graph of order $n$.
We say that $A(G)$ admits an \defin{$\varepsilon$-factorization} if there exist simple graphs $H$ and $K$ on the same vertex set $V(G)$ with adjacency matrices $A(H),A(K)$ such that there exists a $0$--$1$ \emph{error matrix} $E\in\{0,1\}^{n\times n}$ with
\[
\|E\|_0\le \varepsilon n^2
\quad\text{and}\quad
\bigl(A(H)A(K)-A(G)\bigr)\odot (J-E)=0,
\]
where $\|E\|_0\coloneqq |\{(i,j):E_{ij}=1\}|$, $J$ is the all-ones matrix, and $\odot$ denotes the Hadamard (entrywise) product.

\begin{op}\label{prob:approx-factorization}
Given a simple graph $G$ and a tolerance $\varepsilon\in[0,1]$, decide whether $A(G)$ admits an $\varepsilon$-factorization that is, determine whether there exist simple graphs $H,K$ on $V(G)$ with
\[
|\{(i,j)\mid \ (A(H)A(K))_{ij}\neq A(G)_{ij}\}|\ \le\ \varepsilon n^2.
\]
\end{op}
\item Let $\mathcal H=(V,E)$ be a simple $k$-uniform hypergraph on the vertex set $V$, and let
$B(\mathcal H)\in\{0,1\}^{V\times E}$ be its incidence matrix, defined by
\[
B(\mathcal H)_{v,e} \;=\;
\begin{cases}
1,& \text{if } v\in e,\\
0,& \text{if } v\notin e.
\end{cases}
\]
Define the (pair–coincidence) adjacency matrix of $\mathcal H$ by
\[
A(\mathcal H)\;\coloneqq \;B(\mathcal H)\,B(\mathcal H)^{\!\top}\;-\;\operatorname{diag}\!\bigl(B(\mathcal H)\,B(\mathcal H)^{\!\top}\bigr).
\]
Thus, for $u\neq v$,
$A(\mathcal H)_{uv}\;=\;\bigl|\{\,e\in E\mid\ \{u,v\}\subseteq e\,\}\bigr|$ and $A(\mathcal H)_{uu}=0$.

We are interested in the factorization problem for $k$-uniform hypergraphs under this adjacency:
in particular, in the case of block designs. If $\mathcal H$ is a $2$-$(v,k,\lambda)$ design
(i.e., every pair of distinct vertices lies in exactly $\lambda$ blocks of size $k$), then
\[
A(\mathcal H)=\lambda\,(J-I),
\]
where $J$ and $I$ denote the $v\times v$ all-ones and identity matrices, respectively.
It is worth mentioning that this case has been studied here \citet{li2025cyclotomic,kreher2025lambda}.

\begin{op}Classify the factorizations of $\lambda(J-I)$ as a product of adjacency
matrices of (simple, uniform) hypergraphs on $V$; that is, determine all pairs
$\mathcal H_1,\mathcal H_2$ on $V$ such that
\[
A(\mathcal H_1)\,A(\mathcal H_2)\;=\;\lambda\,(J-I).
\]
\end{op}
\item
Let $G$ be a finite group with $|G|=n$ and identity element $e$. For subsets $S,T\subseteq G$ and $u\in G$, recall that 
$N_{S,T}(u)\;\coloneqq \;\bigl|\{(s,t)\in S\times T\mid \ st=u\}\bigr|$.
Given integers $a,b\ge 1$, consider subsets $S,T\subseteq G$ with $|S|=a$ and $|T|=b$.

\begin{op}
Determine the largest absolute constant $c>0$ (independent of $G$ and $n$) such that for every finite group $G$ of order $n$ and all $a,b\ge 1$ with
$ab\ \le\ c\,\sqrt{n}$,
there exist subsets $S,T\subseteq G$ with $|S|=a$, $|T|=b$, and
\[
N_{S,T}(u)\ \le\ 1\qquad\text{for all }u\in G.
\]
\end{op}
\begin{op}
Find the asymptotically sharp threshold on the product $ab$ guaranteeing that one can choose $S,T\subseteq G\sm\{1\}$ so that $U\coloneqq ST$ has the unique–product property. In particular, when $e\notin S\cup T$ and $S,T$ are symmetric, this is equivalent to the Cayley matrix-product factorization
$A(G;U)\;=\;A(G;S)\,A(G;T)$.
\end{op}

\item Motivated by \citet{miraftab2025factorability}'s study of matrix‑product factorizations in the infinite setting, we ask to investigate the case of Cayley graphs of infinite groups.
\end{enumerate}

\section*{Acknowledgment}

This research was conducted while the second author was visiting the University of Regina. 
This research was supported by NSERC.

\section*{Data Availability} No datasets were generated or analysed during the current study.
\section*{Declarations}
The authors declare no conflict of interest.

\bibliographystyle{plainurlnat}
\bibliography{MPF.bib}
\nocite{*}
\newpage
    
\end{document}